\let\ams@starttoc\@starttoc
\let\@starttoc\ams@starttoc
\patchcmd{\@starttoc}{\makeatletter}{\makeatletter\parskip\z@}{}{}
\newcommand{\C}{\mathbb{C}}
\newcommand{\R}{\mathbb{R}}
\newcommand{\Z}{\mathbb{Z}}
\newcommand{\CP}{\mathbb{C}P}
\newcommand{\RP}{\mathbb{R}P}
\newcommand{\T}{\mathbb{T}}
\newcommand{\im}{\mathrm{im}}
\newcommand{\OP}{\operatorname}
\newcommand{\rank}{\operatorname{rank}}
\newcommand{\pt}{\operatorname{pt}}
\newsavebox{\textvisiblespacebox}
\savebox{\textvisiblespacebox}{\texttt{aa}}
\newcommand\vartextvisiblespace[1][\wd\textvisiblespacebox]{%
 \makebox[#1]{\kern.1em\rule{.4pt}{.3ex}%
 \hrulefill%
 \rule{.4pt}{.3ex}\kern.1em}%
}
\numberwithin{equation}{section}
\newtheorem{thm}{Theorem}[section]
\newtheorem{lma}[thm]{Lemma}
\newtheorem{prp}[thm]{Proposition}
\newtheorem{cor}[thm]{Corollary}
\newtheorem{mainthm}{Theorem}
\newtheoremstyle{TheoremNum}
 {\topsep}{\topsep}  %%% space between body and thm
 {\itshape}   %%% Thm body font
 {}    %%% Indent amount (empty = no indent)
 {\bfseries}   %%% Thm head font
 {.}    %%% Punctuation after thm head
 { }    %%% Space after thm head
 {\thmname{#1}\thmnote{ \bfseries #3}}%%% Thm head spec
\theoremstyle{TheoremNum}
\theoremstyle{definition}
\newtheorem{dfn}[thm]{Definition}
\newtheorem{ex}[thm]{Example}
\theoremstyle{remark}
\newtheorem{rmk}[thm]{Remark}
\theoremstyle:=definition,remark,plain,TheoremNum\do{% 
\expandafter\g@addto@macro\csname th@\theoremstyle\endcsname{% 
\addtolength\thm@preskip\parskip 
}% 
} 
\title{On weakly exact Lagrangians in Liouville bi-fillings}
\author{Georgios Dimitroglou Rizell}
\address{Department of Mathematics\\
Uppsala University\\
Box 480\\
SE-751 06 UPPSALA\\
SWEDEN}
\email{georgios.dimitroglou@math.uu.se}
\thanks{The author is supported by the Knut and Alice Wallenberg Foundation through the grants KAW 2021.0191 and KAW 2023.0294.}
\begin{document}

\begin{abstract}
Here we study several questions concerning Liouville domains that are diffeomorphic to cylinders, so called trivial bi-fillings, for which the Liouville skeleton moreover is smooth and of codimension one; we also propose the notion of a Liouville-Hamiltonian structure, which encodes the symplectic structure of a hypersurface tangent to the Liouville flow, e.g.~ the skeleta of certain bi-fillings. We show that the symplectic homology of a bi-filling is non-trivial, and that a connected Lagrangian inside a bi-filling whose boundary lives in different components of the contact boundary automatically has non-vanishing wrapped Floer cohomology. We also prove geometric vanishing and non-vanishing criteria for the wrapped Floer cohomology of an exact Lagrangian with disconnected cylindrical ends. Finally, we give homotopy-theoretic restrictions on the closed weakly exact Lagrangians in the McDuff and torus bundle Liouville domains.
\end{abstract}

\maketitle
\setcounter{tocdepth}{1}
\tableofcontents

\section{Introduction and results}

A compact \textbf{Liouville domain} is a pair $(X^{2n},\lambda)$ consisting of a smooth $2n$-dimensional manifold $X$ with smooth boundary $\partial X \ne \emptyset$, a one-form $\lambda \in \Omega^1(X)$ for which $d\lambda$ is symplectic, and for which the Liouville vector field $\zeta$ defined by $\iota_\zeta \omega=\lambda$ is everywhere outwards pointing along the boundary. One says that the Liouville domain $(X^{2n},\lambda)$ is a \textbf{Liouville filling} of its contact boundary $(\partial X,\lambda|_{T\partial X})$.

The most well-studied case of Liouville domains are those that are Weinstein domains, i.e.~to a Liouville domains for which $\zeta$ is gradient-like for a Morse function, or exact deformations $(X^{2n},\lambda+df)$ of such domains; we refer to \cite{Cieliebak:SteinWeinstein} by Cieliebak--Eliashberg for the precise definition. In the case when $(X,\lambda)$ arises from a Weinstein structure, the skeleton
$$ \OP{Skel}(X^{2n},\lambda) \coloneqq \bigcap_{t \ge 0} \phi^{-t}_{ \zeta}(X)$$
can be seen to admit an isotropic structure. In particular, this means that a $2n$-dimensional Weinstein domain has a handle-decomposition consisting of cells of index at most $n$. In particular, from this we conclude that the natural map $H_0(\partial X) \to H_0(X)$ induced by the inclusion of the boundary is an isomorphism whenever $n \ge 2$ and $X$ is Weinstein.

Comparatively little is known about Liouville domains that admit no deformations to a Weinstein domain through Liouville structures. All known examples are basically of the form $X^{2n}=M^{2n-1} \times I$ (possibly with additional Weinstein handles added) for $n \ge 2$. The first examples were constructed by McDuff in \cite{McDuff} and it was later discovered that they fit into a more general framework of Liouville structures arising from Anosov dynamics. Since such a Liouville domain fills two contact manifold, but in a topologically trivial way, they were called \textbf{trivial Liouville bi-fillings} in \cite{Hozoori}. The link between the known trivial bi-fillings examples and Anosov flows on three-dimensional manifolds was first exhibited in work by Mitsumatsu \cite{Mitsumatsu}. We refer to more recent work by Hozoori \cite{Hozoori} and Massoni \cite{Massoni:Anosov} for the latest developments of the connections between Anosov dynamics, bi-contact structures, and Liouville bi-fillings. 

In this article we only consider trivial Liouville bi-fillings whose skeleton is smooth and of codimension one. We discuss some general aspects of such Liouville manifolds in Section \ref{sec:Liouville}; there we introduce a concept called a \text{Liouville-Hamiltonian structure} in order to capture the behaviour of a Liouville vector-field that is tangent to a codimension one hypersurface, e.g.~the skeleton in the cases of interest. It should be noted that we do not know any examples of trivial Liouville bi-fillings with a smooth codimension one skeleton beyond the cases that correspond to Anosov flows as studied by the aforementioned authors. Here our main focus will be the most well studied cases; those of the McDuff domains (Subsection \ref{sec:McDuff}) and torus bundle domains (Subsection \ref{sec:torus}).

Anosov flows are very rich dynamical systems, and the symplectic topology of the corresponding trivial Liouville bi-fillings encodes this. In \cite{Cieliebak:Anasov} Cieliebak--Lazarev--Massoni--Moreno showed that the wrapped Fukaya category in the case of a McDuff or torus bundle domain contains information about the dynamics of the related Anosov flows. Since our results and proofs do not rely on this category, we will not give a full description of it, but instead refer to the aforementioned work for details. Roughly, the wrapped Fukaya category is a category whose objects are the exact Lagrangians $L^n \subset (X^{2n},\lambda)$ with \textbf{cylindrical ends}. These are half-dimensional smooth and compact submanifolds possibly with boundary on which $\lambda$ restricts to an exact one-form, whose boundary $\partial L \subset \partial X$ is contained in the boundary of $\partial X$, and where a collar of $\partial L$ is tangent to the Liouville flow. This means that the boundary $\partial L \subset (\partial X,\lambda|_{T\partial X})$ is a Legendrian submanifold of the contact boundary, and that $\lambda|_{TL} \in \Omega^1(L)$ vanishes near $\partial L$ and is globally exact on $L$. Under the weaker assumptions that $\lambda|_{TL}$ merely is a closed one-form that vanishes near the boundary, and for which the symplectic area of any class in $\pi_2(X,L)$ vanishes, the Lagrangian submanifold $L$ is said to be \textbf{weakly exact}.

The wrapped Fukaya category is one of the most powerful invariants of the symplectic topology of a Liouville domain. However, in order to have any chance to compute the invariant, we need a well-behaved set of generators of this category. The reason is that the set of cylindrical Lagrangians, and thus objects of this category, has infinite cardinality, even when they are considered up to Hamiltonian isotopy. Furthermore, the set of cylindrical Lagrangians up to Hamiltonian isotopy is not possible to grasp. In the case of a Weinstein domain the wrapped Fukaya category has been shown to be generated by the finite number of Lagrangian cocore discs for any choice of Weinstein handle decomposition; see work by the author with Chantraine--Ghiggini--Golovko \cite{Generation} as well as work by Ganatra--Pardon--Shende \cite{GPS}. In the case of a non-Weinstein Liouville domain, no analogous result. It should be noted that even in the case of McDuff or torus bundle domains, the existence of a finite generating set seems implausible in the light of \cite{Cieliebak:Anasov}. 

The main goal of this paper is to provide new restrictions on the behaviour of weakly exact Lagrangian submanifolds in Liouville bi-fillings, mainly in the aforementioned cases of a McDuff or torus-bundle domain.

Our first result concerns a topological restriction on the closed weakly exact Lagrangians inside the McDuff and torus-bundle domains. We refer to Subsections \ref{sec:McDuff} and \ref{sec:torus} for the precise definitions of these trivial Liouville bi-fillings. Here it is just important to recall the following particularities of these domains.
\begin{itemize}
 \item \emph{Torus-bundle domains} are the total spaces of a Lagrangian $\T^2$-fibrations $p \colon V \to S^1 \times I$, whose torus fibres all are weakly exact. These will be called \textbf{standard weakly exact Lagrangian tori}.
 \item \emph{McDuff domains} are total spaces of non-trivial $S^1$-bundles $p \colon V \to \Sigma_g \times I$ for $g \ge 2$ of Euler number $2g-2$. The standard weakly exact Lagrangian tori of McDuff domains are the \textbf{circle-bundle tori} described in \cite[Remark 4.3]{Cieliebak:Anasov}; these tori are homotopic to a union of $S^1$-fibres of the bundle over a homotopically non-trivial curve in the base. It was shown in \cite[Theorem 3]{Cieliebak:Anasov} that when the aforementioned curve in $\Sigma_g$ is a closed embedded geodesic, then the circle-bundle torus can be deformed through weakly exact embedded Lagrangian tori to one that is exact. We will call the latter an \textbf{exact circle-bundle torus}.
\end{itemize} 
 In particular, the torus-bundle domains and McDuff domains are smooth fibre-bundles over the base $S^1$ and $\Sigma_g$, respectively. Frequently we will take both finite and infinite covers of these bases to induce covers $\tilde{V} \to V$ of the corresponding Liouville domain $V$. 

Note that all tori described above are incompressible. Our first result is that all weakly exact, respectively exact, Lagrangian tori in the two above domains are homotopic to standard tori.
\begin{mainthm}
\label{thm:a}
Let $L^2 \subset (V^4,d\lambda)$ be a closed Lagrangian submanifold of either a McDuff or a torus-bundle domain. Then $L$ is either a torus or an connected sum of $2k$ number of $\RP^2$'s with $k \ge 2$. When $L$ is a weakly exact torus, it moreover follows that
\begin{enumerate}
\item \emph{When $V \to S^1 \times I$ is a torus bundle domain:} The Lagrangian $\iota \colon L \hookrightarrow V$ lifts to any cover $\tilde{V}_k\to V$ that is induced by a $k$-fold cover of the base $S^1$. Moreover, when $k \gg 0$ is sufficiently large, the lifted Lagrangian $\tilde\iota \colon L \hookrightarrow \tilde{V}_k$ is Hamiltonian isotopic to a standard fibre in the Lagrangian torus-bundle $\tilde{V}_k$.
\item \emph{When $V \to \Sigma_g$ is a McDuff domain:} There exists a finite $k$-fold cover $\tilde{V}_k \to V$ that is induced by some suitable $k$-fold cover of the base $\Sigma_g$, which hence again is a McDuff domain, under which $\iota \colon L \hookrightarrow V$ lifts to a Lagrangian $\tilde\iota \colon L \hookrightarrow \tilde{V}_k$ that is isotopic to a circle-bundle torus through weakly exact Lagrangians. When $L$ is exact, then we can moreover assume that there exists a Hamiltonian isotopy to an exact circle-bundle torus inside $\tilde{V}_k$.
\end{enumerate}
In either case, the inclusion $L \subset V$ of a weakly exact Lagrangian torus is incompressible.
\end{mainthm}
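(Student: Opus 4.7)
The plan is to deduce the incompressibility as a topological corollary of parts~(1) and~(2), once I verify that the two \emph{standard model} tori---the torus fibre of $\tilde V_k \to S^1_k \times I$ in the torus-bundle case, and the circle-bundle torus in the McDuff case---are themselves incompressible in the respective covers.

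First, I would check incompressibility of the standard models by inspecting fibration long exact sequences. In the torus-bundle case this is immediate from the long exact sequence of $\tilde V_k \to S^1_k \times I$ together with $\pi_2(S^1_k \times I) = 0$, which gives $\pi_1(T^2) \hookrightarrow \pi_1(\tilde V_k)$. In the McDuff case, I would write a circle-bundle torus as $T = p^{-1}(\gamma)$ for a homotopically essential simple closed curve $\gamma$ in the (again higher genus) base $\Sigma^{(k)}_g$, and compare the long exact sequences of the two $S^1$-bundles $T \to \gamma$ and $\tilde V_k \to \Sigma^{(k)}_g \times I$. Because $\pi_2$ of either base vanishes, both long exact sequences degenerate into short exact sequences on $\pi_1$; a five-lemma argument using the injection $\pi_1(\gamma) \hookrightarrow \pi_1(\Sigma^{(k)}_g)$ then gives $\pi_1(T) \hookrightarrow \pi_1(\tilde V_k)$.

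Next, I would transfer this to $\tilde\iota(L)$ via the isotopies provided by parts~(1) and~(2). A Hamiltonian isotopy, or more generally a Lagrangian isotopy, is in particular a smooth isotopy of submanifolds, and so preserves the homotopy class of the inclusion. Hence $\tilde\iota_\ast \colon \pi_1(L) \to \pi_1(\tilde V_k)$ agrees, up to a change of base point, with the $\pi_1$-map of a standard model, and is therefore injective. Composing with the injection $\pi_1(\tilde V_k) \hookrightarrow \pi_1(V)$ induced by the covering map yields injectivity of $\iota_\ast \colon \pi_1(L) \to \pi_1(V)$, which is the required incompressibility.

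The hard part of Theorem~A is thus hidden in parts~(1) and~(2), which rely on the symplectic-topological hypothesis of weak exactness; the incompressibility itself is an almost formal consequence. I note in particular that one does not need the full strength of the Hamiltonian or weakly exact Lagrangian isotopy here---only that one has a smooth isotopy of submanifolds in the cover.
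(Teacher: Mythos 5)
Your argument for the final claim (incompressibility) is correct, and it agrees with how the paper handles that claim: the paper simply remarks, just before the theorem, that the standard tori are incompressible, and leaves the descent through the covering map implicit. Your fibration long-exact-sequence check for the two standard models, the observation that a smooth isotopy of submanifolds suffices to transfer injectivity on $\pi_1$, and the composition with the injection $\pi_1(\tilde{V}_k)\hookrightarrow\pi_1(V)$ induced by the covering are all sound. One small point worth making explicit in the McDuff case: the curve $\gamma$ over which the circle-bundle torus sits is essential in the cover $\tilde{\Sigma}$ because that cover is chosen precisely so that $\pi_1(\tilde{\Sigma})\cong\Z$ is generated by (the image of) $\gamma$; with that noted, your five-lemma step goes through.

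The genuine gap is one of scope: the statement you were asked to prove is all of Theorem~A, and your proposal assumes its substantive content. You do not address the diffeomorphism classification of $L$ (which in the paper uses the Lagrangian adjunction formula and Audin's Pontryagin-square computation to force $\chi(L)\equiv 0$, plus Shevchishin's theorem to exclude the Klein bottle), nor parts~(1) and~(2) themselves. Those parts are where all the symplectic input lives: the $\pi_1$-restrictions of Cieliebak--Lazarev--Massoni--Moreno (torus-bundle case) and a free-subgroup argument via Jaco's theorem (McDuff case) to produce a cover to which $L$ lifts; the Lalonde--Sikorav theorem to rule out lifts landing in cotangent bundles of open surfaces; the classification of weakly exact tori in $T^*\T^2$ up to Hamiltonian isotopy; and Scott's subgroup separability to descend from the infinite cover to a finite one in which the isotopy stays embedded. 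As written, your text is a correct proof of the last sentence of the theorem conditional on parts~(1) and~(2), not a proof of the theorem.
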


We currently do not have any results on the generation of the wrapped Fukaya category of Liouville bi-fillings. In the process of showing that some given subset of Lagrangian submanifolds generate the wrapped Fukaya category, it is useful to have geometric conditions for which objects are vanishing or non-vanishing in the category. E.g.~the fact that any exact Lagrangian that is disjoint from the skeleton has vanishing wrapped Floer cohomology was a crucial ingredient in the proof of the generation criterion in \cite{Generation}. In Liouville bi-fillings we instead have a natural condition for \emph{non-vanishing} of the wrapped Floer cohomology group.

Assume that $(X,\lambda)$ is a connected Liouville domain whose contact boundary consists of the connected components 
$$ \partial X =\bigsqcup_{i \in \pi_0(\partial X)} (\partial X)_i.$$
For any Lagrangian $L \subset X$ with cylindrical ends, we consider the decomposition
$$(\partial L)_i \coloneqq \partial L \cap (\partial X )_i, \:\: i \in \pi_0(\partial X)$$
induced by the components of the contact boundary. Denote by $I_L \subset \pi_0(\partial X)$ the image of the natural map $\pi_0(\partial L) \to \pi_0(\partial X)$ induced by the inclusion.

 In the following two results we use the grading convention of \cite{Ritter:TQFT}, in which wrapped Floer cohomology $HW^*(L,L)$ and symplectic cohomology $SH^*(X)$ are graded (in general over $\Z_2$) unital algebras that admit graded canonical maps from the Morse (or singular) cohomology rings $H^*(L)$ and $H^*(X)$, respectively. 
\begin{mainthm}
\label{thm:b}
Let $L^n \subset (X^{2n},\lambda)$ be a connected exact Lagrangian with cylindrical ends. The kernel of the canonical map in wrapped Floer cohomology
$$ H^*(L) \to HW^*(L,L) $$
is contained in the image of the canonical map in singular cohomology
$$ \bigoplus_{i \in I_L} H^*(L,\partial L\setminus (\partial L)_i) \to H^*(L).$$
In particular, when $|I_L| \ge 2$ (i.e.~when $L$ has boundary components contained in several different components of $\partial X$), the latter map does not hit $H^0(L)$, and thus the wrapped Floer cohomology $HW^*(L,L) \neq 0$ is non-vanishing.
\end{mainthm}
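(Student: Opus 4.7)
The plan is to use a Viterbo–Seidel style exact triangle coming from the action filtration on the wrapped Floer complex, together with a localisation of its connecting map to the individual components of the contact boundary. For an admissible Hamiltonian $H$ that is a $C^2$-small Morse perturbation near $L$ and strictly linear of positive slope in each cylindrical end of $X$, the Reeb chord generators of $CW^*(L,L;H)$ have positive action while the constant Morse generators have zero action. Passing to the direct limit in the slope, this yields the standard exact triangle
$$HW^{*-1}_+(L,L) \xrightarrow{\,\delta\,} H^*(L) \to HW^*(L,L) \to HW^*_+(L,L),$$
so the kernel of the canonical map $H^*(L) \to HW^*(L,L)$ coincides with the image of $\delta$.

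The crucial step is to refine $\delta$ according to the tautological vector space decomposition
$$CW^*_+(L,L) = \bigoplus_{i \in I_L} CW^*_{+,i}(L,L),$$
where $CW^*_{+,i}(L,L)$ is generated by the Reeb chords of $(\partial L)_i \subset (\partial X)_i$. Taking $J$ cylindrical in a collar of each $(\partial X)_j$ and applying the Abouzaid–Seidel maximum principle to the radial coordinate $r$: a Floer strip whose asymptotes all lie in $(\partial X)_i$ has no asymptote in $(\partial X)_j$ for $j \neq i$, so $r$ on the preimage of the $j$-collar has no interior maximum and is bounded by its value on the parts of the boundary lying in $L$, forcing the strip to avoid the end $(\partial X)_j$ altogether. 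This promotes the decomposition to one of chain complexes, so $\delta = \sum_{i \in I_L} \delta_i$ with each $\delta_i \colon HW^{*-1}_{+,i}(L,L) \to H^*(L)$. The same argument shows that Floer strips from a Reeb chord $c \in (\partial X)_i$ to a Morse critical point stay in $X$ away from the collars of $\partial L \setminus (\partial L)_i$; choosing the Morse function on $L$ with negative gradient pointing outwards on $\partial L \setminus (\partial L)_i$ then ensures that the broken trajectories computing the restriction of $\delta_i(c)$ to $(\partial L)_j$ for $j \neq i$ are empty. Hence $\delta_i(c)$ lies in the kernel of the restriction $H^*(L) \to H^*(\partial L \setminus (\partial L)_i)$, which by the long exact sequence of the pair $(L, \partial L \setminus (\partial L)_i)$ is the image of $H^*(L, \partial L \setminus (\partial L)_i) \to H^*(L)$.

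The second assertion is then an immediate topological consequence. When $|I_L| \geq 2$, the set $\partial L \setminus (\partial L)_i$ is non-empty for every $i \in I_L$, so by connectedness of $L$ the long exact sequence of the pair gives $H^0(L, \partial L \setminus (\partial L)_i) = 0$. The image of $\bigoplus_{i \in I_L} H^*(L, \partial L \setminus (\partial L)_i) \to H^*(L)$ therefore does not contain the unit $1 \in H^0(L)$, and by the first part $1$ is not in the kernel of the canonical map; since the latter is unital, its image is the non-zero unit of $HW^0(L,L)$, so $HW^*(L,L) \neq 0$. The main obstacle is the chain-level decomposition of the positive complex together with the vanishing of the restrictions $\delta_i(c)|_{(\partial L)_j}$ for $j \neq i$; both reduce to the ``no escape'' lemma for $r$ in each cylindrical end, combined with an SFT-style compactness argument to suppress Floer interactions between distinct contact components.
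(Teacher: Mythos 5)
Your overall strategy -- filter the wrapped complex by action, identify the low-action homology with $H^*(L)$, split the positive-action part as $\bigoplus_{i\in I_L} CW^*_{+,i}$ by boundary components via a no-escape/neck-stretching argument, and then localise the connecting map $\delta_i$ -- is exactly the skeleton of the paper's proof, and the final topological step ($H^0(L,\partial L\setminus(\partial L)_i)=0$ for $L$ connected and $\partial L\setminus(\partial L)_i\neq\emptyset$, plus unitality) is correct as you state it. The decomposition of the positive complex is also essentially right, although as phrased ("a strip whose asymptotes all lie in $(\partial X)_i$ has no asymptote in $(\partial X)_j$") it is circular; what one actually has to rule out is a strip with input in the $i$-th end and output in the $j$-th end, and since the putative output \emph{is} an asymptote in the $j$-th end, a plain maximum principle at a level below it does not apply. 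One needs the quantitative version: stretch the neck along a contact hypersurface separating the $j$-th end and observe that the action of the output gets rescaled by $e^C$ while the input's action is fixed, which is the content of the paper's Lemma \ref{lma:actionstretch}.

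The genuine gap is in the key localisation step, where you claim $\delta_i(c)$ lies in $\ker\bigl(H^*(L)\to H^*(\partial L\setminus(\partial L)_i)\bigr)$. Your mechanism -- "choosing the Morse function on $L$ with negative gradient pointing outwards on $\partial L\setminus(\partial L)_i$" -- cannot work as stated: the Morse function is $H|_L$ for the \emph{fixed} admissible Hamiltonian, which is increasing towards \emph{every} boundary component, and redirecting its gradient along $\partial L\setminus(\partial L)_i$ would change what the zero-action complex computes (it would become a relative group rather than $H^*(L)$) and, worse, would have to be done simultaneously and incompatibly for every $i\in I_L$ inside one and the same complex. Moreover, the information that the Floer strips from $c$ avoid the collar of $(\partial L)_j$ does not by itself constrain the cochain $\delta_i(c)$: all Morse generators of the zero-action complex sit in the interior of $L$, away from all collars, so "the strip avoids the $j$-th collar" places no restriction on which generators appear, nor on how the resulting cocycle restricts to $(\partial L)_j$. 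The paper resolves this by engineering a non-monotone (Cieliebak--Oancea style) Hamiltonian that creates two additional shells of generators near $\partial L$, modelled on a Morse function on $\partial L$, so that the total low-action complex computes $H^*(L)$ while the subcomplex omitting the outer-shell generators over $\partial L\setminus(\partial L)_i$ computes $H^*(L,\partial L\setminus(\partial L)_i)$; the neck-stretching lemma then shows $\delta_i$ lands in that subcomplex because the excluded generators have positive action. Some device of this kind (or a PSS/evaluation-map argument tracking where the moduli space of strips meets $\partial L$) is needed; your proposal currently asserts the conclusion without supplying it.
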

There is a completely analogous result also for the versions of Floer homology defined for periodic Hamiltonian orbits on a Liouville domain.
\begin{mainthm}
\label{thm:b2}
Let $(X,\lambda)$ be a connected Liouville domain. The kernel of the canonical map in symplectic cohomology
$$ H^*(X) \to SH^*(X) $$
is contained in the image of the canonical map in singular cohomology
$$ \bigoplus_{i \in \pi_0(\partial X)} H^*(X,\partial X \setminus (\partial X)_i) \to H^*(X).$$
In particular, when $|\pi_0(\partial X)| \ge 2$, the latter map does not hit $H^0(X)$, and thus symplectic cohomology $SH^*(X) \neq 0$ is non-vanishing.
\end{mainthm}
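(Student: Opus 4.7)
The plan is to follow the structure of the proof of Theorem~\ref{thm:b}, working now in the closed-string (Hamiltonian Floer) setting rather than the open-string (wrapped Floer) one.

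First, I would realize $SH^*(X)$ as the direct limit of Hamiltonian Floer cohomologies $HF^*(H_T)$ for admissible Hamiltonians $H_T \colon X \to \R$ that are $C^2$-small Morse in the interior of $X$ and linear with slope $T$ in the radial coordinate on a collar of $\partial X$. The interior Morse critical points of $H_T$ generate a subcomplex computing $H^*(X)$, and the remaining generators are Reeb-type $1$-periodic orbits appearing above $\partial X$; these latter generators partition by the component $(\partial X)_i$ they sit above, yielding a vector-space splitting $SC^*_+(X) = \bigoplus_{i \in \pi_0(\partial X)} SC^*_{+,i}$. The canonical map $c \colon H^*(X) \to SH^*(X)$ is identified at the chain level with the inclusion of the Morse subcomplex.

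Second, I would invoke the Cieliebak--Oancea tautological long exact sequence
\begin{equation*}
\cdots \to SH^{*-1}_+(X) \xrightarrow{\delta} H^*(X) \xrightarrow{c} SH^*(X) \to SH^*_+(X) \to \cdots,
\end{equation*}
so that $\ker c = \im \delta$, and reduce to showing that, for each $i \in \pi_0(\partial X)$, the restriction $\delta|_{SC^*_{+,i}}$ takes values in the image of the natural map $H^*(X, \partial X \setminus (\partial X)_i) \to H^*(X)$; summing over components then yields the first assertion of the theorem. Via the Poincar\'e--Lefschetz duality isomorphism $H^*(X, \partial X \setminus (\partial X)_i) \cong H_{2n - *}(X, (\partial X)_i)$, this reduces to exhibiting, for each Reeb-type orbit $\gamma \in SC^{*-1}_{+,i}$, a relative cycle in $(X, (\partial X)_i)$ whose image in $H_{2n - *}(X, \partial X)$ is Poincar\'e dual to $\delta(\gamma)$. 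Such a cycle is provided by the evaluation of the compactified parameterized moduli space of Floer cylinders in $X$ with one asymptote fixed to be $\gamma$: the boundary-at-infinity of this moduli space contains the trivial-cylinder stratum, which maps precisely to $\gamma \subset (\partial X)_i$.

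Once this inclusion is established, the non-vanishing of $SH^*(X)$ when $|\pi_0(\partial X)| \ge 2$ is an elementary consequence: as $X$ is connected and $\partial X \setminus (\partial X)_i$ is non-empty for every $i$, the restriction $H^0(X) \to H^0(\partial X \setminus (\partial X)_i)$ is injective, so each $H^0(X, \partial X \setminus (\partial X)_i)$ vanishes, and hence the unit $1 \in H^0(X)$ cannot lie in $\ker c$. The principal obstacle is the Poincar\'e-dual identification of the chain-level connecting map $\delta$ as a relative cycle bounded on a single contact component, which is the direct analog of the boundary-component localization step used in the proof of Theorem~\ref{thm:b}.
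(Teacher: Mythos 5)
Your overall architecture is the same as the paper's: realize $SH^*(X)$ via Hamiltonians that are Morse and small in the interior, identify $c\colon H^*(X)\to SH^*(X)$ with the inclusion of the low-action subcomplex, use the cone/tautological-sequence identity $\ker c=\operatorname{im}\delta$, split the Reeb-type generators by boundary component, and reduce to showing that $\delta$ restricted to the $i$-th summand factors through $H^*(X,\partial X\setminus(\partial X)_i)\to H^*(X)$. That reduction, and the elementary $H^0$ argument at the end, are exactly right.

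The gap is in the one step that carries all the analytic content, namely your claim that the evaluation of the compactified parameterized moduli space of Floer cylinders asymptotic to $\gamma$ gives a relative cycle in $(X,(\partial X)_i)$. Two things are missing. First, the boundary of that compactified moduli space does not consist only of the trivial-cylinder stratum over $\gamma$: it also contains broken configurations, and a priori a Floer cylinder leaving a Reeb-type orbit over $(\partial X)_i$ could break at, or travel into the collar of, a different boundary component $(\partial X)_j$. If that happens, the evaluation chain has boundary away from $(\partial X)_i$ and is not a relative cycle in $(X,(\partial X)_i)$; for the same reason, your direct-sum decomposition of $SC^*_+$ is only asserted as a splitting of vector spaces, whereas you need the summands to be respected by the differential for $\delta|_{SC^*_{+,i}}$ to even be defined. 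The paper supplies precisely this confinement mechanism: the neck-stretching Lemma \ref{lma:actionstretch}, applied along the contact-type hypersurfaces separating off the other boundary components, shows that such trajectories are excluded for a suitable cylindrical almost complex structure (this is the content of Lemma \ref{lma:complexdecomp} and of the final Proposition of Section 4, where $\delta_i$ is shown to land in the subcomplex $SC^*_{0,i}(X)$ of generators in $X_{-\epsilon-\epsilon/2}\cup U_i$, whose homology is $H^*(X,\partial X\setminus(\partial X)_i)$). Second, even granting confinement, identifying $\delta(\gamma)$ with the Poincar\'e dual of an evaluation cycle is a nontrivial PSS-type statement that you would need to prove; the paper avoids it entirely by working at chain level with Floer's Morse-theoretic identification of the low-action subcomplexes. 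Without an argument replacing Lemma \ref{lma:actionstretch}, the localization to a single boundary component — which is the whole point of the theorem — is not established.
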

The latter two theorems give an immediate topological condition for when the Floer homology is non-trivial. In Theorem \ref{thm:vanishing} and Corollary \ref{cor:vanishing} we also give vanishing conditions for wrapped Floer homology. The underlying mechanism is based upon the vanishing of wrapped Floer homology in the presence of positive contractible loops of its Legendrian boundary; c.f.~work by Chantraine--Colin and the author \cite{Chantraine:Positive} as well as Cant--Hedicke--Kilgore \cite{HedickeCantKilgore}. 

The non-vanishing result for the wrapped Floer cohomology given by Theorem \ref{thm:b} implies constraints on the dynamics of the Liouville flow on the skeleton in the case when the skeleton is smooth, codimension one, and nowhere characteristic; see Definition \ref{dfn:nowherechar}. Namely, given periodic orbits of the Liouville flow Corollary \ref{cor:Lagcyl} produces an exact Lagrangian cylinder with cylindrical ends in two different contact boundary components. Theorem \ref{thm:b} then implies that the wrapped Floer homology of such a Lagrangian is non-vanishing. On the other hand, if the orbit is contained in a smooth ball, we can use the vanishing result Corollary \ref{cor:vanishing} to show that the wrapped Floer homology vanishes. In other words:
\begin{mainthm}
\label{thm:c}
 Let $M^3 \subset (X^4,\lambda)$ be a closed hypersurface of a Liouville domain tangent to the Liouville vector field $\zeta_\lambda$, where the latter is assumed to be:
\begin{itemize}
\item nowhere characteristic along $M$ (see Definition \ref{dfn:nowherechar}); and
\item $[\nu]^*$-repelling along $M$ for some choice of non-vanishing normal vector $\nu\in\ker\lambda$ to $M$ (see Definition \ref{dfn:skeleton}).
\end{itemize}
Then no smooth ball contains a periodic orbit of the Liouville vector field.
\end{mainthm}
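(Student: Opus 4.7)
The plan is to argue by contradiction. Suppose that a periodic orbit $\gamma$ of $\zeta_\lambda$ is contained in a smoothly embedded ball $B\subset X$. Necessarily $\gamma\subset M$, since $M$ is tangent to the Liouville flow and hence invariant under it.

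First, I would invoke Corollary \ref{cor:Lagcyl} to produce from $\gamma$ an exact Lagrangian cylinder $L\cong S^1\times I\subset X$ with two Legendrian boundary components, lying in two distinct components of $\partial X$. Geometrically, $L$ is the closure of the $\zeta_\lambda$-orbit cylinder through $\gamma$: the nowhere-characteristic hypothesis ensures that $L$ is a smooth Lagrangian submanifold, while the $[\nu]^*$-repelling hypothesis forces the flow to escape $M$ transversally on the two opposite sides, landing the two ends in distinct components of $\partial X$. Exactness follows from $\lambda(\zeta_\lambda)=0$, which yields $\int_\gamma\lambda=0$ and hence vanishing periods of $\lambda|_{TL}$ on the $S^1$-factor.

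Since $|I_L|=2$, Theorem \ref{thm:b} then gives $HW^*(L,L)\neq 0$, because the unit $1\in H^0(L)$ cannot lie in the image of $\bigoplus_{i\in I_L}H^*(L,\partial L\setminus(\partial L)_i)\to H^*(L)$. On the other hand, I would use the hypothesis $\gamma\subset B$ to construct a positive contractible loop of the Legendrian boundary of $L$ in one of the two contact components of $\partial X$, so that Corollary \ref{cor:vanishing} forces $HW^*(L,L)=0$ -- the desired contradiction. The positivity is provided by the Reeb flow on $\partial X$ wrapping along the cylindrical end of $L$; the contractibility comes from the disk in $B$ bounding $\gamma$, pushed forward by the Liouville flow to a null-homotopy of the Legendrian boundary loop inside $\partial X$.

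The main obstacle is precisely this last construction: converting the hypothesis ``$\gamma$ lies in a smooth ball'' into a positive contractible loop of Legendrians in the sense required by Corollary \ref{cor:vanishing}. One must carefully match the geometric disk filling of $\gamma$ inside $B$ with an admissible null-homotopy of the Legendrian end of $L$ in the relevant contact boundary component, and verify that a suitable positive contact Hamiltonian realises the loop. Once this is in place, the conflict between the non-vanishing from Theorem \ref{thm:b} and the vanishing from Corollary \ref{cor:vanishing} completes the proof.
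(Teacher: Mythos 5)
Your overall architecture is the same as the paper's: produce the exact Lagrangian cylinder from the periodic orbit via Corollary \ref{cor:Lagcyl}, get $HW^*(L,L)\neq 0$ from Theorem \ref{thm:b} since $|I_L|=2$, and derive a contradiction from a vanishing criterion. The gap is in the vanishing step, and the mechanism you propose for it would not work. First, you conflate Theorem \ref{thm:vanishing} with Corollary \ref{cor:vanishing}: the corollary does not ask you to exhibit a positive contractible loop at all; it asks only that the Legendrian boundary sit inside a suitable subcritical contact manifold minus Darboux balls. Second, the construction you sketch for the loop is flawed on both counts. The Reeb flow moves a Legendrian positively transversely to the contact distribution, but it does not return the Legendrian to itself, so it does not produce a \emph{loop} of Legendrians. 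And contractibility of a positive loop is contractibility of a path in the space of Legendrian embeddings (a two-parameter family of loops of Legendrians connecting it to a constant loop); it has nothing to do with the Legendrian curve bounding a disk in $B$ or in $\partial X$, so the disk filling of $\gamma$ cannot be ``pushed forward'' to supply it.

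The step you are missing is a purely contact-topological upgrade of the hypothesis. Since $\gamma$ lies in a smooth ball, each Legendrian end of $L$ lies in a smooth ball inside the corresponding component of the three-dimensional contact boundary $\partial X$. Because $\partial X$ is fillable it is tight, and Eliashberg's uniqueness theorem for tight contact structures on the $3$-ball then shows that each such ball is a \emph{standard Darboux} ball, whose interior is contactomorphic to $S^3\setminus D$ for $D$ a closed Darboux ball. At that point Corollary \ref{cor:vanishing} applies verbatim in its $n=2$, $Y_{sc}=S^3$ case (whose proof already contains the explicit positive contractible loop for Legendrians in the standard contact $\R^3$, following \cite{Chantraine:Positive}), giving $HW^*(L,L)=0$ and the contradiction. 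Without the tightness-plus-Eliashberg input, ``smooth ball'' alone gives you no control on the contact geometry near $\partial L$, and the vanishing half of the argument does not close.
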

\begin{rmk}
In higher dimensions there is no immediate obstruction to the existence of periodic orbits of the Liouville flow on the skeleton. However, analogously to the above result, there are obstructions to the existence of a closed $n-1$-dimensional submanifold of a $2n-1$-dimensional hypersurface $M \subset (X^{2n},\lambda)$ tangent to the Liouville vector field (which is assumed to satisfy the properties of the theorem), where the submanifold
\begin{itemize}
 \item is transverse to the characteristic foliation;
 \item pulls back the Liouville form to zero;
 \item lives in a smooth ball that becomes a Darboux ball in nearby contact-type hypersurfaces.
 \end{itemize}
\end{rmk}

 It is a non-trivial task to construct examples where the assumptions of Theorem \ref{thm:c} are satisfied. All examples known to the author come from work of Mitsumatsu \cite{Mitsumatsu}, then further developed by Hozoori \cite{Hozoori}, \cite{Hozoori:Skeleton} and Massoni \cite{Massoni:Anosov}. In certain particularly well-behaved cases their constructions yield Liouville vector fields that are Anosov. In that case something even stronger is true: the Liouville flow has no periodic orbits that are contractible. 

\section*{Acknowledgements}
 I am grateful to Surena Hozoori for pointing out a mistake in the first version of this paper and for the very thorough comments of the referee (in the first version additional assumptions were missing to ensure that the Liouville vector field tangent to a hypersurface is repelling). Then, I would like to the organisers of the wonderful conference \emph{Symplectic Geometry and Anosov Flows} in Heidelberg 22 -- 26 July 2024; Peter Albers, Jonathan Bowden and Agustin Moreno. I would also like to thank Thomas Massoni for useful discussions. Interactions at this conference gave rise to lots of inspiration and ideas that were crucial for this paper.

\section{Trivial Liouville bi-fillings and Liouville-Hamiltonian structures}

 Here we provide a general framework for studying and classifying Liouville flows near a compact hypersurface $M^{2n-1} \subset (X^{2n},\lambda)$ that is invariant under the Liouville flow (and which hence is a subset of the skeleton). The goal is to characterise trivial Liouville bi-fillings $(X=M \times [-1,1],\lambda)$ whose skeleton $M \subset X$ is smooth and of codimension one. In order to do this we need to analyse the Liouville structure near a hypersurface that is tangent to the Liouville vector field. Theorem \ref{thm:classification} provides a type of standard-form for these hypersurfaces under certain additional assumptions. This naturally leads to the abstract notion of a so-called Liouville-Hamiltonian structure, that we define in Subsection \ref{sec:Liouville}, which is a structure that encodes the Liouville structure near a hypersurface that is tangent to the Liouville flow. 

A smooth skeleton of a Liouville domain is homotopy equivalent to one of its tubular neighbourhoods, and hence homotopy equivalent to the domain itself. It follows that the skeleton is orientable, and that it has a smooth trivial tubular neighbourhood $M \times [-1,1] \hookrightarrow X$. In favourable situations the entire Liouville manifold is a trivial product $X\cong M \times [-1,1]$ with the skeleton included as $M \times \{0\} \subset M \times [-1,1] = X$, and with the Liouville vector field $\zeta_\lambda$ having a non-vanishing $\partial_s$-component outside of the skeleton. In order to formulate conditions for when this is the case, we need to introduce some additional properties; in particular see Definitions \ref{dfn:nowherechar} and \ref{dfn:skeleton}. 

Constructions of four-dimensional Liouville manifolds $X^4=M^3 \times [-1,1]$ with skeleton of the above type go back to Mitsumatsu's work \cite{Mitsumatsu}. More recent developments were carried out by Hozoori in \cite{Hozoori:Skeleton} and \cite{Hozoori}, and Massoni \cite{Massoni:Anosov}. The common feature of these constructions is that $X^4=M^3 \times [-1,1]_s$ is endowed with the structure of a Liouville domain by interpolating between two contact forms $\alpha_\pm$ on $M$. Here we will encounter Liouville forms $\lambda=(1-s)\alpha_-+(1+s)\alpha_+$, which are Liouville under conditions on the contact forms $\alpha_\pm$. In \cite[Theorem 1.2]{Hozoori:Skeleton} Hozoori gives a dynamical characterisation those flows on $M$ that can be extended to a Liouville flow of this type.

\subsection{ A standard form for hypersurfaces tangent to the Liouville flow}

We begin with the following definition.

\begin{dfn}
\label{dfn:nowherechar}
Let $M \subset (X,\lambda)$ be a smooth hypersurface. The Liouville vector field $\zeta_\lambda \in \Gamma(TX)$ is said to be \emph{nowhere characteristic on $M$} if it is tangent to $M$, and there exists a smooth and non-vanishing normal vector field $\nu \in \ker (\lambda|_M) \subset TX|_M$ to $M$.
\end{dfn}
 
If $\zeta_\lambda$ is non-vanishing on $M$, then being nowhere characteristic on $M$ is equivalent to the intersection $\ker \lambda \pitchfork TM$ being transverse at every point in $M$. We recall Hozoori's result that provides several different characterisations of this condition: 
\begin{lma}[Lemma 7.1 in \cite{Hozoori:Skeleton}]
 Assume that the Liouville vector field $\zeta_\lambda$ on $(X,\lambda)$ is tangent to a hypersurface $M$, and that it moreover is non-zero there. Then the following statements are equivalent:
 \begin{enumerate}
 \item $\zeta_\lambda$ is nowhere characteristic on $M$ in the sense of Definition \ref{dfn:nowherechar};
 \item $\zeta_\lambda$ is not tangent to the characteristic distribution $\ker d\lambda|_M$ of $M$ at any point; and
 \item the intersection $\ker(\lambda|_M) \pitchfork TM$ of codimension-one sub-bundles of $TX$ is transverse at every point in $M$.
 \end{enumerate}
\end{lma}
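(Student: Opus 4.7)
The plan is to reduce all three conditions to the pointwise non-vanishing of $\lambda|_{TM}$ on $M$. For $(2) \iff (3)$, I would use the identity $d\lambda(\zeta_\lambda, v) = \lambda(v)$ coming straight from $\iota_{\zeta_\lambda} d\lambda = \lambda$. Restricting to $v \in T_pM$ and using that $\zeta_\lambda(p) \in T_pM$ by tangency, this gives: $\zeta_\lambda(p)$ lies in the one-dimensional characteristic line $\ker d\lambda|_{T_pM}$ precisely when $\lambda$ annihilates all of $T_pM$. On the other hand $(3)$ asserts that the hyperplanes $\ker \lambda|_{T_pX}$ and $T_pM$ of $T_pX$ are distinct at every $p$, which is the same as $\lambda|_{T_pM} \not\equiv 0$ everywhere on $M$. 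So both $(2)$ and $(3)$ reduce to the non-vanishing of $\lambda|_{TM}$.

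For $(1) \iff (3)$, the forward direction is immediate: any non-vanishing $\nu \in \Gamma(\ker \lambda|_{TX|_M})$ transverse to $TM$ witnesses $\ker \lambda_p \neq T_pM$ at every $p$. For the converse I plan to construct $\nu$ explicitly. By $(3)$ the 1-form $\lambda|_{TM}$ is nowhere zero and trivialises the quotient line bundle $TM/\ker \lambda|_{TM}$, so there is a globally non-vanishing $R \in \Gamma(TM)$ with $\lambda(R) \equiv 1$. Picking any globally non-vanishing normal field $\tilde\nu \in \Gamma(TX|_M)$ — available by co-orientability of $M$ in $X$, which is automatic for the orientable invariant hypersurfaces considered in the paper — I set
\[ \nu := \tilde\nu - \lambda(\tilde\nu)\, R. \]
The correction term lies in $TM$, so $\nu$ is still transverse to $TM$ and hence non-vanishing, and by construction it lies in $\ker \lambda$.

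The essential content is pointwise linear algebra organised around the identity $d\lambda(\zeta_\lambda, \cdot) = \lambda(\cdot)$ on $TM$, which ties the characteristic direction, the Liouville vector field, and $\lambda|_{TM}$ together at one stroke. The only step going beyond this is the global construction of $\nu$ in $(3) \Rightarrow (1)$, and I expect that to be the most delicate point: it depends on the co-orientability of $M$ in $X$, which we may take as automatic in the geometric settings of interest.
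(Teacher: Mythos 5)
The paper itself offers no proof of this lemma --- it is recalled verbatim from \cite{Hozoori:Skeleton} --- so there is no in-text argument to compare against. Your argument is correct as pointwise linear algebra: restricting the identity $\iota_{\zeta_\lambda}d\lambda=\lambda$ to $TM$ (using that $\zeta_\lambda(p)\in T_pM$) does show that both (2) and (3) are equivalent to the nowhere-vanishing of $\lambda|_{TM}$. Two small points worth making explicit: $\ker\lambda_p\subset T_pX$ is an honest hyperplane because $\zeta_\lambda(p)\neq 0$ and $d\lambda$ is non-degenerate, which is what legitimises reading (3) as ``the two hyperplanes are distinct at every point''; and the existence of $R\in\Gamma(TM)$ with $\lambda(R)\equiv 1$ uses a splitting of $TM\to TM/\ker(\lambda|_{TM})$, which is standard. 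The correction $\nu=\tilde\nu-\lambda(\tilde\nu)\,R$ then lands in $\ker\lambda$ and stays transverse to $TM$, as you say.

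The point you flag in $(3)\Rightarrow(1)$ --- the existence of a global non-vanishing normal field $\tilde\nu$ --- is a genuine hypothesis rather than a formality. Under (3) the map $v\mapsto\iota_v d\lambda$ identifies the characteristic line field $\ker(d\lambda|_{TM})$ with the conormal bundle of $M$, so the normal bundle is trivial exactly when the characteristic foliation is orientable, and neither is automatic for an abstract hypersurface. Since Definition \ref{dfn:nowherechar} demands a \emph{global} non-vanishing $\nu\in\ker(\lambda|_M)$, condition (1) simply cannot hold for a non-co-orientable $M$, whereas (2) and (3) are pointwise and conceivably could; so the equivalence as literally stated presupposes co-orientability (equivalently, (1) should be read locally). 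In the situations the paper actually uses the lemma --- $M$ a smooth skeleton, which is shown to admit a trivial tubular neighbourhood $M\times[-1,1]\hookrightarrow X$ --- this is satisfied, so your caveat is exactly the right one and the proof goes through.
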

 
An important consequence of $\zeta_{ \lambda}$ being nowhere characteristic on $M$ is that the pull-back $\eta \coloneqq \lambda|_{TM} \in \Omega^1(M)$ has the same rank as $\lambda|_M$. In other words, $\lambda$ and $\eta$ agree along $M$ after extending the latter form $\eta$ by zero in the direction $\nu$. The following proposition elaborates on this property: 
\begin{prp}
\label{prp:beta}
 
Assume that the Liouville vector field $\zeta_\lambda \in \Gamma(TX)$ of $(X,\lambda)$ is nowhere characteristic on the hypersurface $M \subset X$, and let $\nu \in \ker(\lambda|_M)$ be a choice of non-vanishing normal vector field along $M$.
\begin{enumerate}
 \item For any choice of embedding
$$ \iota \colon M \times \R_s \hookrightarrow X$$
such that $M=\{s=0\}$ and $\partial_s|_M=\nu$, we have
$$\iota^*(d\lambda)|_{M \times \{0\}}=d\eta+ds \wedge \beta_\nu,$$
where the one-forms $\eta=\lambda|_{TM},\beta_\nu \in \Omega^1(M)$ are extended to all of $M \times \R_s$ by pull-back under the canonical projection $M \times \R_s \to M$. Note that $\beta_\nu(\zeta_\lambda)=0$ while $\beta_\nu|_{\ker d\eta} \neq 0$ is nowhere vanishing.

\item For $\nu'=\pm e^g\nu+\Xi$ where $\Xi \in \ker (\lambda|_{TM}) \subset TM$ and $g \colon M \to \R$ is smooth, we have
$$ \beta_{\nu'}=\pm e^g\beta_\nu+\iota_\Xi d\eta,$$
where the second term thus vanishes on both $\zeta_\lambda$ and $\ker d\eta$.
\end{enumerate}
\end{prp}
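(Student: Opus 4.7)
The plan is to choose a product chart and reduce everything to a decomposition of forms into their $TM$-part and $ds$-part. First I would write
\[
\iota^*\lambda=a(x,s)\,ds+\gamma_s,
\]
where $a\in C^\infty(M\times\R)$ and $\gamma_s$ is a smooth family of one-forms on $M$, pulled back to $M\times\R_s$. Since $\partial_s|_{M}=\nu\in\ker(\lambda|_M)$ we obtain $a|_{s=0}=\lambda(\nu)=0$ and $\gamma_0=\lambda|_{TM}=\eta$. Taking the exterior derivative and restricting to $s=0$ produces
\[
\iota^*(d\lambda)\big|_{M\times\{0\}}=d\eta+ds\wedge\beta_\nu,\qquad \beta_\nu\coloneqq(\partial_s\gamma_s-d_Ma)\big|_{s=0}\in\Omega^1(M),
\]
which is the claimed normal form.

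Next I would verify the two asserted properties of $\beta_\nu$. Contracting the formula above with the tangential vector field $\zeta_\lambda$ and using the Liouville identity $\iota_{\zeta_\lambda}d\lambda=\lambda$ yields $\iota_{\zeta_\lambda}d\eta-\beta_\nu(\zeta_\lambda)\,ds=\eta$, since $ds(\zeta_\lambda)=0$. Comparing the $ds$-components forces $\beta_\nu(\zeta_\lambda)=0$, and as a byproduct one extracts the identity $\iota_{\zeta_\lambda}d\eta=\eta$, which I would then reuse in part (2). For the nowhere-vanishing of $\beta_\nu|_{\ker d\eta}$ I would use that $(d\lambda)^n$ is a volume form on $X$ while $(d\eta)^n\equiv 0$ for dimensional reasons ($\dim M=2n-1$); expanding and using $(ds\wedge\beta_\nu)^2=0$ gives
\[
(d\lambda)^n\big|_{M\times\{0\}}=n\,ds\wedge\beta_\nu\wedge(d\eta)^{n-1}\neq 0,
\]
so $(d\eta)^{n-1}$ has maximal rank everywhere on $M$, the kernel $\ker d\eta\subset TM$ is a line field, and $\beta_\nu$ is forced to be pointwise non-zero on this line.

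Part (2) I expect to be a short linear-algebra computation: for $v\in TM$ one has $\beta_\nu(v)=d\lambda(\nu,v)$, because $ds\wedge\beta_\nu$ paired with $(\partial_s, v)$ yields exactly $\beta_\nu(v)$ while the pulled-back $d\eta$ vanishes in such a pairing. Writing $\nu'=\pm e^g\nu+\Xi$ and using bilinearity gives
\[
\beta_{\nu'}(v)=d\lambda(\nu',v)=\pm e^g\,d\lambda(\nu,v)+d\lambda(\Xi,v)=\pm e^g\beta_\nu(v)+(\iota_\Xi d\eta)(v),
\]
where the last equality uses $(ds\wedge\beta_\nu)(\Xi,v)=0$ for $\Xi,v\in TM$. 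The asserted vanishing of $\iota_\Xi d\eta$ on $\zeta_\lambda$ follows from the byproduct identity $\iota_{\zeta_\lambda}d\eta=\eta$ together with $\eta(\Xi)=\lambda(\Xi)=0$, and its vanishing on $\ker d\eta$ is immediate from the antisymmetry of $d\eta$. The hard part, to the extent that there is one, is the dimension/volume calculation in part (1) that pins down $\ker d\eta$ as a line bundle on which $\beta_\nu$ is non-vanishing; the rest is careful bookkeeping in the product chart.
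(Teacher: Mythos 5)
Your argument is correct and, for Part (1), follows the same route as the paper: decompose $\iota^*\lambda$ (equivalently $\iota^*d\lambda$) into its $TM$-part and $ds$-part in the product chart, identify the $TM$-part of the two-form with $d\eta$, and extract $\beta_\nu(\zeta_\lambda)=0$ together with $\iota_{\zeta_\lambda}d\eta=\eta$ from the Liouville identity and $\lambda(\nu)=0$. You go beyond the paper's written proof in one respect: the claim that $\beta_\nu|_{\ker d\eta}$ is nowhere vanishing is asserted in the statement but not argued in the paper's proof, whereas your expansion $(d\lambda)^n|_{M\times\{0\}}=n\,ds\wedge\beta_\nu\wedge(d\eta)^{n-1}\neq 0$ supplies it cleanly (and simultaneously shows $\ker d\eta$ is a line field). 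For Part (2) your route is genuinely different: the paper computes the pullback of the normal form $d\eta+ds\wedge\beta_\nu$ under the transition map $\iota^{-1}\circ\iota'$ between the two product charts, whereas you first establish the chart-independent pointwise formula $\beta_\nu(v)=d\lambda(\nu,v)$ for $v\in TM$ and then simply use bilinearity of $d\lambda$ in the normal slot. Your version is shorter and makes the dependence of $\beta_\nu$ on $\nu$ transparent (it is just $\iota_\nu d\lambda$ restricted to $TM$), at the cost of not exhibiting the coordinate change itself, which the paper reuses implicitly when comparing embeddings later; both are complete proofs.
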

\begin{rmk}
 
We will see that the quantity $d\beta(\mathcal{C},\zeta_\lambda)$ determines the infinitesimal properties of the dynamics of the Liouville flow near $M$, where $\mathcal{C} \in \Gamma(TM)$ is a generator of $\ker d\eta \subset \Gamma(TM)$ that satisfies $\beta_\nu(\mathcal{C})=1$. Note that Part (2) of Proposition \ref{prp:beta} implies the identity
$$d\beta_{\nu'}(\pm e^{-g}\mathcal{C},\zeta_\lambda)=dg(\zeta_\lambda)+d\beta_\nu(\mathcal{C},\zeta_\lambda)$$
between the two-forms induced by the two different choices of normals $\nu$ and $\nu'=\pm e^g\nu+\Xi$ along $M$, where $\Xi \in \ker (\lambda|_{TM})$.
\end{rmk}
\begin{proof}
 
(1.) We can clearly write
$$\iota^*(d\lambda|_M)=\omega+ds \wedge \beta_\nu$$
where both $\omega \in \Omega^2(M)$ and $\beta_\nu \in \Omega^1(M)$ have been extended to all of $M \times \R_s$ by the pull-back of the canonical projection $M \times \R_s \to M$. Clearly, $\omega$ is independent of any choices made, while $\beta_\nu$ only depends on the normal vector field $\nu$. Furthermore, we clearly have $\omega|_{TM}=d\eta$ where $\eta=\lambda|_{TM}$, while $\partial_s \in \ker (\lambda|_M)$ implies that $\beta_\nu(\zeta_\lambda)=0$. Hence, the pull-back of the symplectic form is as claimed.

(2.) For a different choice $\iota' \colon M \times \R_{s'} \hookrightarrow X$ with coordinate $s'$ and $\partial_{s'}|_M=\nu' \in \ker \lambda$ with $\nu'=\pm e^g\nu+\Xi$, $g \colon M \to \R$ smooth and $\Xi \in \ker (\lambda|_{TM}) \subset \Gamma(TM)$, we get
$$(\iota')^*d\lambda|_M=d\eta+ds' \wedge \beta_{\nu'}.$$
We can assume that $\iota^{-1}\circ \iota'$ is well-defined near $M$, and thus get
$$d\eta+ds' \wedge \beta_{\nu'}=(\iota^{-1}\circ \iota')^*(d\eta+ds\wedge\beta_\nu)=d\eta+ds' \wedge (\pm e^g\beta_\nu+\iota_{\Xi}d\eta)$$
as sought, since
\begin{gather*}
(\iota^{-1}\circ \iota')^*d\eta=d\eta+ds' \wedge \iota_\Xi d\eta,\\
(\iota^{-1}\circ \iota')^* ds \wedge \beta_\nu=\pm e^g ds' \wedge \beta_\nu,
\end{gather*}
are satisfied.
\end{proof}
 
As we will see, the one-form $\beta_\nu$ contains information about the infinitesimal growth of the $\nu$-component of $\zeta_\lambda$ measured along the same normal direction; see Theorem \ref{thm:classification} for a precise statement.

In the following we let $N^*M \subset T^*X|_M \to M$ denote the co-normal bundle of the orientable hypersurface $M \subset X$, i.e.~the one-dimensional sub-bundle of one-forms on $TX|_M$ that vanish on $TM$. Since any non-vanishing vector field $\nu$ that is normal to $M$ induces a fibre-wise basis $[\nu] \in \Gamma(TX|_M/TM)$ of the normal bundle, we get an induced fibre-wise basis $[\nu]^* \in \Gamma(N^*M)$ of the conormal bundle by the requirement $[\nu]^*([\nu])=1$ at every fibre.
 
\begin{dfn}
\label{dfn:skeleton}
Assume that the Liouville vector field $\zeta_\lambda$ on $(X,\lambda)$ is tangent to a hypersurface $M$, and let $\alpha$ be a non-vanishing section of its co-normal bundle $N^*M \subset T^*X|_M$. We say that $\zeta_\lambda$ is \emph{$\alpha$-repelling along $M$} if $$d(ds(\zeta_\lambda))|_M=e^g\alpha,$$
holds for some $g \colon M \to \R$ for any $s \colon X \to \R$ that satisfies $ds|_M=\alpha.$
\end{dfn}
 
Two functions $\tilde{s}$ and $s$ whose differentials $d\tilde{s}|_M=ds|_M$ agree along $M$ satisfy $$d\tilde{s}=ds+d(s^2G)=ds+2sG\,ds+s^2\,dG, \:\: G \in C^\infty(X).$$
It follows that $d\tilde{s}(\zeta_\lambda)=ds(\zeta_\lambda)+\mathbf{O}(s^2)$ which means that the one-form $d(ds(\zeta_\lambda))|_M$ along $M$ only depends on the choice of $\alpha=ds|_M$.

Any choice of vector field $\nu \in \ker \lambda$ that is normal to $M$ can be extended to an embedding $\iota \colon M \times \R_s \to X$ with $\partial_s=\nu$ and $\iota^{-1}(M)=M \times \{0\}$. There is an identity $[ds]=[\nu]^*$ of sections of the co-normal bundle $N^*M$ of $M \subset X$.

We have the following normal form for a Liouville vector field that is tangent to $M$ and nowhere characteristic in the sense of Definition \ref{dfn:nowherechar}. In particular, it follows from this result that the $[\nu]^*$-repelling property of $\zeta_\lambda$ can be expressed in terms of the two-form $d\beta_\nu$.
\begin{thm}
\label{thm:classification}
Assume that the Liouville vector field $\zeta_\lambda \in \Gamma(TX)$ of the Liouville domain $(X^{2n},\lambda)$ is nowhere characteristic on the hypersurface $M^{2n-1}$. Let $\nu \in \ker \lambda|_M$ be a choice of non-vanishing vector field normal to $M$. There is an embedding $\iota \colon M \times [-\epsilon,\epsilon]_s \hookrightarrow X$, for $\epsilon>0$ sufficiently small, $M=s^{-1}(0)$, and $\partial_s|_M=\nu$, such that
$$\iota^*\lambda=\eta+s\beta_\nu+d(s^2G)$$
for $\eta,\beta_\nu \in \Omega^1(M)$ as in Proposition \ref{prp:beta}, and $G \in C^{\infty}(M \times [-\epsilon,\epsilon])$. Moreover;
\begin{enumerate}
 \item In these coordinates the Liouville vector field can be written as
$$\zeta_\lambda=\zeta_\eta+2sG\cdot \mathcal{C}+s\cdot \Xi+sF\cdot \partial_s,$$
where $\zeta_\eta \in \Gamma(TM)$ satisfies $\iota_{\zeta_\eta}d\eta=\eta$, $\Xi \in \cap \ker \beta_\nu \cap \Gamma(TM)$, and $\mathcal{C} \in \ker d\eta$ is defined uniquely by $\beta_\nu(\mathcal{C})=1$.
\item The function $F$ is determined by $$F=1+d\beta_\nu(\mathcal{C},\zeta_\eta).$$
In particular, $\zeta_\lambda$ is $[ds]$-repelling if and only if $d\beta_\nu(\mathcal{C},\zeta_\eta)>-1$ holds on $M$.
\end{enumerate}
\end{thm}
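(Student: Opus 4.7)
My plan is to produce the normal form for $\iota^*\lambda$ by a Moser-type argument, and then read off the Liouville vector field and the function $F$ by direct computation. To set up, start with any embedding $\iota_0 \colon M \times \R_s \hookrightarrow X$ satisfying $\iota_0|_{M \times \{0\}} = \id_M$ and $d\iota_0(\partial_s)|_M = \nu$, and decompose $\iota_0^*\lambda = A(s) + h(s)\, ds$ with $A(s) \in \Omega^1(M)$ smoothly $s$-dependent and $h \in C^\infty(M \times \R)$. By Proposition \ref{prp:beta} and the identity $\lambda(\nu)|_M = 0$, we have $A(0) = \eta$, $h(\cdot, 0) \equiv 0$, and the Taylor expansion $A(s) = \eta + s\beta_\nu + O(s^2)$. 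Define $G_0(m, s) := s^{-2}\int_0^s h(m, u)\, du$, which extends smoothly across $s = 0$ by Taylor's theorem with $G_0(\cdot,0) = \tfrac12 \partial_s h|_{s = 0}$, and which satisfies $\partial_s(s^2 G_0) = h$. The candidate target $\lambda_1 := \eta + s\beta_\nu + d(s^2 G_0)$ then has the same $ds$-component as $\iota_0^*\lambda$ exactly, and matches its $M$-component through first order in $s$.

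Apply a Moser-type argument to the family $\lambda_t := (1 - t)\iota_0^*\lambda + t\lambda_1$. The 2-forms $\omega_t = d\lambda_t$ all agree at $s = 0$ with $d\eta + ds \wedge \beta_\nu$, hence are symplectic in a tubular neighborhood of $M$. The crucial observation is that, with the choice above, the difference $\mu := \lambda_1 - \iota_0^*\lambda$ vanishes to \emph{second} order at $M$ in all components. Consequently the Moser vector field $X_t$ defined by $\iota_{X_t}\omega_t = -\mu$ also vanishes to second order at $M$, so its time-one flow $\phi_1$ fixes $M$ pointwise and satisfies $d\phi_1|_M = \id$ on $T(M \times \R)|_M$. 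Thus $\phi_1^*\lambda_1 - \iota_0^*\lambda$ is closed and vanishes on $M$ as a 1-form on $M \times \R$; the relative Poincaré lemma then writes it as $d(s^2 H)$ for some smooth $H$. Setting $\iota := \iota_0 \circ \phi_1^{-1}$ and absorbing $H$ into $G_0$ produces $\iota^*\lambda = \eta + s\beta_\nu + d(s^2 G)$.

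For Part (1), compute $d\iota^*\lambda = d\eta + ds \wedge \beta_\nu + s\, d\beta_\nu$ (the exact piece is annihilated by $d$). Writing $\zeta_\lambda = V + f\,\partial_s$ with $V \in TM$ and using the direct sum $TM = \R\mathcal{C} \oplus \ker\beta_\nu$, the Liouville equation $\iota_{\zeta_\lambda}(d\iota^*\lambda) = \iota^*\lambda$ splits into an $M$-component and a $ds$-coefficient. Restricting to $s = 0$ and evaluating on $\mathcal{C}$ — using that $\mathcal{C} \in \ker d\eta$ forces $\eta(\mathcal{C}) = 0$ by consistency of the defining equation $\iota_{\zeta_\eta} d\eta = \eta$ — identifies $V|_{s = 0}$ as the unique element of $\ker\beta_\nu$ satisfying $\iota_{V|_0} d\eta = \eta$, namely $\zeta_\eta$, and forces $f|_{s = 0} = 0$. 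The $\mathcal{C}$-coefficient of $V$ is read off from $\beta_\nu(V) = -\partial_s(s^2 G)$, the $ds$-projection of the Liouville equation, and the $\ker\beta_\nu$-component of $V$ decomposes as $\zeta_\eta + s\Xi + O(s^2)$, yielding the claimed expression for $\zeta_\lambda$.

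For Part (2), evaluate the Liouville equation on $\mathcal{C}$: with $d\eta(\cdot,\mathcal{C}) = 0$ (from $\mathcal{C} \in \ker d\eta$), $\beta_\nu(\mathcal{C}) = 1$, $\eta(\mathcal{C}) = 0$, and $\lambda(\mathcal{C}) = s + s^2\mathcal{C}(G)$, the equation reduces to $f + s\, d\beta_\nu(V,\mathcal{C}) = s + s^2\mathcal{C}(G)$. Substituting $f = sF$ and $V|_{s = 0} = \zeta_\eta$ yields $F|_M = 1 + d\beta_\nu(\mathcal{C},\zeta_\eta)$ as claimed. The repelling criterion then follows immediately: $ds(\zeta_\lambda) = sF$ gives $d(ds(\zeta_\lambda))|_M = F|_M\, ds|_M = F|_M \cdot [\nu]^*$, so by Definition \ref{dfn:skeleton}, $\zeta_\lambda$ is $[\nu]^*$-repelling precisely when $F|_M > 0$, i.e., $d\beta_\nu(\mathcal{C},\zeta_\eta) > -1$ on $M$. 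The main technical obstacle is the normal-form step: arranging the Moser flow to vanish to \emph{second} order at $M$ (so that the resulting closed correction is exactly of the form $d(s^2 H)$), which is precisely why the specific choice $G_0 = s^{-2}\int_0^s h$ matching the second Taylor coefficient of $h$ is essential.
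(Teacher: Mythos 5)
Your proposal is correct and follows essentially the same route as the paper: normalize to $\iota^*\lambda=\eta+s\beta_\nu+d(s^2G)$ via a Moser/symplectic-neighbourhood argument combined with the relative Poincar\'e lemma, then contract $d\eta+ds\wedge\beta_\nu+s\,d\beta_\nu$ with $\zeta_\lambda$ and evaluate on $\mathcal{C}$ to extract $F=1+d\beta_\nu(\mathcal{C},\zeta_\eta)$ and the repelling criterion. The only difference is cosmetic: you spell out the Moser step (with the explicit choice $G_0=s^{-2}\int_0^s h$ forcing the correction to vanish to second order along $M$, so the flow fixes $M$ to first order) where the paper simply invokes the standard symplectic neighbourhood theorem.
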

\begin{proof}
By Proposition \ref{prp:beta} we have the identity
$$\iota^*d\lambda|_M = d\eta + ds \wedge \beta_\nu,\:\: \eta, \beta_\nu \in \Omega^1(M). $$
We can use the standard symplectic neighbourhood theorem to replace the embedding with one that agrees with $\iota$ along $M \times \{0\}$ to the first order, and for which
$$\iota^*d\lambda|_M = d(\eta + s \wedge d\beta_\nu)=d\eta +ds \wedge \beta_\nu +s\,d\beta_\nu$$
is satisfied in a neighbourhood of $M \times \{0\}$. Indeed, the latter two-form is symplectic near $M \times \{0\}$ and agrees with $\iota^*d\lambda$ along the hypersurface.

In these coordinates we express the Liouville vector field as 
$$\zeta_\lambda=\zeta_\eta+s\left(2H\cdot \mathcal{C}+\Xi+F\cdot\partial_s\right)+\mathbf{O}(s^2),$$
with $\Xi$ as above, simply from the fact that $\zeta_\lambda=\zeta_\eta$ along $M$. Note that $\lambda$ coincides with the Liouville form $\eta+s\beta_\nu$ along $M$. By the Poincar\'{e} lemma we conclude that $\iota^*\lambda=\eta+s\beta_\nu+d(s^2G)$, where the last term is exact and vanishing along $M$. In other words, by contracting $d\eta+ds\wedge \beta_\nu+s\,d\beta_\nu$ with $\zeta_\lambda$ we get
$$ \eta+s\beta_\nu+d(s^2G)=
\iota_{\zeta_\lambda}d\lambda= \eta+s(2H\,ds+\iota_\Xi d\eta+F\beta_\nu+\iota_{\zeta_\eta}d\beta_\nu)+\mathbf{O}(s^2),$$
from which we conclude that $H=G$ and
$$ \beta_\nu = \iota_\Xi d\eta+F\beta_\nu+\iota_{\zeta_\eta}d\beta_\nu.$$
Contracting this equation with $\mathcal{C}$ we obtain
$1=F+d\beta_\nu(\zeta_\eta,\mathcal{C}),$ from which the second claim follows.
\end{proof}

\subsection{ Skeleta with $[ds]$-repelling Liouville vector fields}
Recall that the \textbf{completion} of a Liouville domain is obtained as follows. A Liouville domain $(X,\lambda)$ has a collar which is isomorphic to the half-symplectisation
$$((-\infty,0]_\tau \times Y,e^\tau\alpha), \:\: Y=\partial X,\: \alpha=\lambda|_{TY}.$$
The completion $(\hat{X},\lambda)$ is obtained by adjoining the other half
$$ (\hat{X},\lambda)=(X,\lambda) \: \cup \: ((0,+\infty)_\tau \times Y,e^\tau\alpha)$$
of the symplectisation. Also, recall the standard fact that a symplectomorphism 
$\Phi \colon (\hat{X}_0,\lambda_0) \to (\hat{X}_1,\lambda_1)$
between two completions of Liouville domains that preserves the Liouville forms outside of a compact subset if and only if it is \textbf{cylindrical} outside of a compact subset of the cylindrical ends
$$ ([0,+\infty)\times Y_i,e^\tau\alpha_i) \subset (\hat{X}_i,\lambda_i),$$
 by which we mean that the symplectomorphism $\Phi$ takes the form
$$ \Phi(\tau,y) = (\tau-g(y),\phi(y)),$$
for $\tau \gg 0$. Hence, the component $\phi \colon (Y_0,\alpha_0) \to (Y_1,\alpha_1)$
is a contactomorphism that satisfies $\phi^*\alpha_1=e^g\alpha_0$. A symplectomorphism that merely preserves the Liouville forms up to the addition of an exact one-form is called \textbf{exact}.

Now consider the setting of Theorem \ref{thm:classification} above, in which $\eta,\beta \in \Omega^1(M)$ are one-forms for which $\lambda=\eta+s\beta+d(s^2G)$ is a Liouville form on $M \times [-\epsilon,\epsilon]_s$ for some $G \in C^\infty(M \times [-\epsilon,\epsilon])$ when $\epsilon>0$ is sufficiently small. In case when $\zeta_\lambda$ is $[ds]$-repelling, we can moreover assume that the Liouville vector-field is outwards transverse to the boundary when $\epsilon>0$, i.e.~$M \times [-\epsilon,\epsilon]$ is a Liouville domain. In this case, for each $t\in[0,1]$, we moreover get a smooth family 
$$(X_\epsilon,\lambda_t)=(M \times [-\epsilon,\epsilon]_s,\lambda_t), \:\: \lambda_t \coloneqq\eta+s\beta+d(t\cdot s^2G)$$
of Liouville domains parametrised by $t \in [0,1]$, where the completion of $(X_\epsilon,\lambda_1)$ coincides with the completion $(\hat{X},\lambda)$. Using the standard result Lemma \ref{lma:Liouvillefam} formulated below we obtain the following corollary of Theorem \ref{thm:classification}: 
\begin{cor}
\label{cor:completions}
 Assume that the Liouville vector field $\zeta_\lambda \in \Gamma(TX)$ of the Liouville domain $(X^{2n},\lambda)$ is nowhere characteristic on its skeleton, which is the smooth hypersurface $M^{2n-1} \subset X^{2n}$. If $\nu \in \ker \lambda|_M$ is a non-vanishing normal to $M$ for which $\zeta_\lambda$ is $[\nu^*]$-repelling along $M$, then there is an exact symplectomorphism, which is cylindrical at infinity, from the completion $(\hat{X},\lambda)$ to the completion of the Liouville domain
$$(M \times [-\epsilon,\epsilon]_s,\eta+s\beta_{ \nu}),\:\: \eta,\beta_{ \nu} \in \Omega^1(M)$$
 whose contact boundary consists of the two components $ (M,\eta\pm \epsilon \beta_\nu).$ Here $\epsilon>0$ is sufficiently small, while $\eta,\beta_\nu \in \Omega^1(M)$ are as in Theorem \ref{thm:classification}. 
\end{cor}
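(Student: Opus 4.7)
My plan is to combine the local normal form from Theorem \ref{thm:classification} with the Moser-type Lemma \ref{lma:Liouvillefam} for smooth families of Liouville forms that was set up just before this corollary. First I would invoke Theorem \ref{thm:classification} to obtain an embedding $\iota\colon M\times[-\epsilon,\epsilon]_s\hookrightarrow X$ with $\iota^*\lambda=\eta+s\beta_\nu+d(s^2G)$ whose Liouville vector field has $\partial_s$-component equal to $sF+\mathbf{O}(s^2)$, where $F=1+d\beta_\nu(\mathcal{C},\zeta_\eta)$. The $[\nu]^*$-repelling hypothesis amounts, by Theorem \ref{thm:classification}(2), to $F>0$ pointwise on $M$, so after shrinking $\epsilon>0$ the vector field $\iota^*\zeta_\lambda$ is outward-transverse to both hypersurfaces $\{s=\pm\epsilon\}$. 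This makes $(M\times[-\epsilon,\epsilon],\iota^*\lambda)$ a genuine Liouville subdomain of $(X,\lambda)$.

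My next step would be to identify the completion of this subdomain with $(\hat X,\lambda)$. Since the (smooth, codimension one) skeleton of $(X,\lambda)$ is precisely $M\times\{0\}$ and is contained in the subdomain, the backward flow of $\zeta_\lambda$ starting at any point of $X\setminus(M\times[-\epsilon,\epsilon])$ must first enter through $M\times\{\pm\epsilon\}$ and, once inside, remain there and converge to the skeleton (the sign of the $\partial_s$-component of $\zeta_\lambda$ drives $s$ toward $0$). Consequently the forward $\zeta_\lambda$-flow from $M\times\{\pm\epsilon\}$ exhausts $X\setminus(M\times[-\epsilon,\epsilon])$ and parametrises it as the union of two positive half-symplectisations of the contact manifolds $(M\times\{\pm\epsilon\},\iota^*\lambda|_{\{s=\pm\epsilon\}})$; upon completing this picture one obtains the desired exact, eventually cylindrical symplectomorphism from the completion of $(M\times[-\epsilon,\epsilon],\iota^*\lambda)$ onto $(\hat X,\lambda)$.

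It remains to replace $\iota^*\lambda=\eta+s\beta_\nu+d(s^2G)$ with the clean model $\eta+s\beta_\nu$. For this I would use the linear interpolation $\lambda_t=\eta+s\beta_\nu+t\cdot d(s^2G)$, $t\in[0,1]$, introduced above. All of these forms share the common symplectic form $d\lambda_t=d\eta+ds\wedge\beta_\nu+s\,d\beta_\nu$, their time derivative $\partial_t\lambda_t=d(s^2G)$ is exact, and each of their Liouville vector fields restricts along $M$ to $\zeta_\eta$ with the same leading $\partial_s$-component $sF+\mathbf{O}(s^2)$. Hence, after possibly shrinking $\epsilon$ once more so that this positivity holds uniformly in $t$, every $(M\times[-\epsilon,\epsilon],\lambda_t)$ is a Liouville domain with outward-transverse boundary flow, and Lemma \ref{lma:Liouvillefam} yields an exact symplectomorphism cylindrical at infinity between the completions at $t=0$ and $t=1$. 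Composing with the identification from the preceding paragraph produces the stated symplectomorphism from $(\hat X,\lambda)$ onto the completion of $(M\times[-\epsilon,\epsilon],\eta+s\beta_\nu)$, whose two contact boundary components manifestly carry the forms $\eta\pm\epsilon\beta_\nu$.

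The step I expect to require the most care is the simultaneous control of the family $\lambda_t$: one must choose a single $\epsilon>0$ so that each $\lambda_t$ is Liouville with outward-transverse flow on $M\times\{\pm\epsilon\}$, which is what lets Lemma \ref{lma:Liouvillefam} deliver an exact symplectomorphism that is cylindrical at infinity rather than merely compactly supported. All other ingredients—Theorem \ref{thm:classification}, the flow-out identification of the complement of the subdomain, and the standard uniqueness of completions up to cylindrical exact symplectomorphism—are then routine.
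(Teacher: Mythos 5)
Your proposal is correct and follows essentially the same route as the paper: apply Theorem \ref{thm:classification} to get the normal form $\eta+s\beta_\nu+d(s^2G)$, use the $[\nu]^*$-repelling property to make $M\times[-\epsilon,\epsilon]$ a Liouville subdomain whose completion is $(\hat{X},\lambda)$ (since $M$ is the skeleton), and then apply Lemma \ref{lma:Liouvillefam} to the interpolation $\lambda_t=\eta+s\beta_\nu+d(t\cdot s^2G)$. The extra details you supply (the flow-out identification of the complement and the uniform choice of $\epsilon$ over the family) are correct elaborations of steps the paper states without proof.
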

The following standard result is needed in the above corollary, and is included for completeness.
\begin{lma}
\label{lma:Liouvillefam}
Let $(X,\lambda_t)$, $ t \in [0,1],$ be a smooth family of Liouville domains. It follows that the completions $(\hat{X},\lambda_t)$ are exact symplectomorphic by symplectomorphisms that are cylindrical outside of a compact subset, i.e. that preserve the Liouville forms there.
\end{lma}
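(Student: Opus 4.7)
The plan is to apply the Moser trick directly on the common completion, exploiting the homogeneous form $\lambda_t=e^\tau\alpha_t$ on the cylindrical end to force the Moser vector field to be automatically $\tau$-independent and bounded there. This in turn guarantees both that its flow exists globally and that the resulting symplectomorphisms are cylindrical at infinity. As a preliminary step one arranges the family $\lambda_t$ as a smooth family of Liouville forms on a single underlying smooth manifold $\hat X$, so that on a fixed cylindrical end $[0,\infty)_\tau\times\partial X$ one has $\lambda_t=e^\tau\alpha_t$ with $\alpha_t\coloneqq\lambda_t|_{T\partial X}$ varying smoothly in $t$; this is a standard application of the parametric collar theorem to the family of $\zeta_{\lambda_t}$-collars of $\partial X$, uniform in $t\in[0,1]$ by compactness.

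Next, I would define the Moser vector field $X_t\in\Gamma(T\hat X)$ as the unique solution to $\iota_{X_t}d\lambda_t=-\dot\lambda_t$, which exists since $d\lambda_t$ is symplectic. On the cylindrical end, writing $X_t=a\,\partial_\tau+Y$ with $Y$ tangent to $\partial X$, and using $d\lambda_t=e^\tau(d\tau\wedge\alpha_t+d\alpha_t)$ together with $\dot\lambda_t=e^\tau\dot\alpha_t$, the defining equation reduces to $\alpha_t(Y)=0$, $a=-\dot\alpha_t(R_{\alpha_t})$ with $R_{\alpha_t}$ the Reeb vector field of $\alpha_t$, and $Y\in\ker\alpha_t$ the unique section determined by $\iota_Y d\alpha_t|_{\ker\alpha_t}=-\dot\alpha_t|_{\ker\alpha_t}$. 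Crucially, $a$ and $Y$ depend only on $y\in\partial X$ and not on $\tau$, so $X_t$ is bounded on the end; its flow $\phi_t$ is therefore defined globally on $\hat X$ for all $t\in[0,1]$ and restricts on the end to a cylindrical diffeomorphism of the form $\phi_t(\tau,y)=(\tau+h_t(y),f_t(y))$.

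Finally, the standard Moser identity
$$\frac{d}{dt}\phi_t^*\lambda_t=\phi_t^*\bigl(\iota_{X_t}d\lambda_t+d\iota_{X_t}\lambda_t+\dot\lambda_t\bigr)=d\bigl(\phi_t^*(\iota_{X_t}\lambda_t)\bigr)$$
integrates to $\phi_t^*\lambda_t-\lambda_0=d\int_0^t\phi_s^*(\iota_{X_s}\lambda_s)\,ds$, exhibiting $\phi_t$ as the required family of exact symplectomorphisms $(\hat X,\lambda_0)\to(\hat X,\lambda_t)$, cylindrical outside the compact subset $X\subset\hat X$. The only point that is not completely formal is the verification of $\tau$-independence and boundedness of $X_t$ on the cylindrical end; once this is in hand, the global existence of the flow and the cylindrical form of $\phi_t$ at infinity are automatic, and the rest is the usual exact Moser computation.
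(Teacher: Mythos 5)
Your proof is correct, and it takes a genuinely different route from the paper's. The paper first applies Gray stability to the family of boundary contact forms $\alpha_t$ to build Liouville-form-preserving identifications near the boundary of a slightly shrunk subdomain $X_0$, then corrects the interior by a relative Moser argument, and finally extends to the completion by the Liouville flow. You instead run a single global Moser flow on the completion, and the whole content of your argument is the computation on the end: writing $X_t=a\,\partial_\tau+Y$, the equation $\iota_{X_t}d\lambda_t=-\dot\lambda_t$ forces $\alpha_t(Y)=0$, $a=-\dot\alpha_t(R_{\alpha_t})$ and $\iota_Yd\alpha_t|_{\ker\alpha_t}=-\dot\alpha_t|_{\ker\alpha_t}$, which I have checked is exactly right (note that your $Y$ is precisely the Gray stability vector field of the family $\alpha_t$ and $a=\dot f_t\circ\psi_t^{-1}$ in the paper's notation, so the two proofs secretly agree at infinity). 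The $\tau$-independence gives completeness of the flow, and the additional observation that $\iota_{X_t}\lambda_t=e^\tau\alpha_t(a\partial_\tau+Y)=0$ on the end shows that the explicit primitive $\int_0^t\phi_s^*(\iota_{X_s}\lambda_s)\,ds$ of $\phi_t^*\lambda_t-\lambda_0$ vanishes on $\{\tau\ge\sup|a|+1\}$, so the flow literally preserves the Liouville forms outside a compact set. What your approach buys is brevity and the fact that exactness and cylindricity drop out of one formula; what the paper's modular approach buys is a cleaner separation of the boundary data (Gray) from the interior (Moser rel boundary), which does not require setting up the whole family on a single completed manifold. The one point you rightly flag as non-formal --- arranging the smooth family $\lambda_t$ on a fixed $\hat X$ with $\lambda_t=e^\tau\alpha_t$ on a fixed end --- is handled by the smooth family of Liouville collars $\Phi_t$ exactly as in the paper's proof, so it is not a gap.
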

\begin{proof}
Since the smooth family of pairs
$$ (Y=\partial X,\alpha_t), \:\: \alpha_t \coloneqq \lambda_t|_{TY},$$
defines a smooth family of contact one-forms on $Y$, Gray's stability theorem \cite{Geiges:Intro} can be used to produce a smooth isotopy $\psi_t$ that satisfies $\psi_t^*\alpha_t=e^{f_t}\alpha_0$ for some smooth family of functions $f_t \colon Y \to \R.$ We use the negative Liouville flow for $\lambda_t$ applied to $Y =\partial X \subset X$ to obtain a smooth family of Liouville-form preserving symplectic embeddings
$$ \Phi_t \colon ((-\infty,0] \times Y,e^\tau\alpha_t) \hookrightarrow (X,\lambda_t)$$
of collars of $\partial X$ for the different Liouville forms.

Consider the Liouville sub-domain $X_0 \subset (X,\lambda_0)$ that is bounded by the contact-type hypersurface $\Phi_0(\{A\} \times Y) \subset X$ for some $A \ll 0$ sufficiently small. If we define
\begin{gather*} \Psi_t \colon (\R_\tau \times Y,e^\tau\alpha_0) \to (\R_\tau \times Y,e^\tau\alpha_t),\\
(\tau,y)\mapsto(\tau-f_t(y),\psi_t(y)),
\end{gather*}
we obtain a Liouville form-preserving symplectic isotopy
$$\Phi_t \circ \Psi_t\circ \Phi_0^{-1} \colon (\mathcal{O}(\partial X_0),\lambda_0) \hookrightarrow (X,\lambda_t)$$
defined on an open neighbourhood $\mathcal{O}(\partial X_0)$ of $\partial(X_0) \subset X$. We can extend this map to a smooth isotopy $\tilde{\Phi}_t \colon X_0 \hookrightarrow X$ which pulls back $\lambda_t$ to $\lambda_0$ near the boundary $\partial X_0$.

There is symplectic isotopy of embeddings $\tilde{\Phi}_t \circ \psi_t \colon (X_0,d\lambda_0) \hookrightarrow (X,d\lambda_t)$, obtained by pre-composing with a smooth isotopy $\psi_t \colon X_0 \to X_0$ produced by Moser's trick, where $\psi_t$ is the identity near the boundary. In addition, we may assume that $\tilde{\Phi}_t \circ \psi_t$ is exact in the sense that $(\tilde{\Phi}_t \circ \psi_t)^*\lambda_t=\lambda_0+dg_t$. 

Finally, we can use the Liouville flow to extend this isotopy of $X_0$ to an exact symplectic isotopy that is cylindrical outside of a compact subset.
\end{proof}

 For the following results we assume that $\dim M=3$. Given any periodic orbit $\gamma \subset M$ of the Liouville flow on $M$ we can extend it to a cylinder $\gamma \times [-\epsilon,\epsilon] \subset M \times [-\epsilon,\epsilon]$ which is an exact Lagrangian with cylindrical ends for the Liouville form $\eta+s\beta$. Indeed, the pull-back of the Liouville form vanishes along the entire cylinder. From Corollary \ref{cor:completions} we thus get:
 \begin{cor}
 \label{cor:Lagcyl}
 Assume that the Liouville vector field $\zeta_\lambda \in \Gamma(TX)$ of the four-dimensional Liouville domain $(X^4,\lambda)$ is nowhere characteristic on its skeleton, which is the smooth hypersurface $M^3 \subset X^{2n}$, and that $\nu \in \ker \lambda|_M$ is a non-vanishing normal to $M$ for which $\zeta_\lambda$ is $[\nu^*]$-repelling along $M$. Then, any periodic orbit $\gamma$ of the Liouville flow on $M$ can be extended to an exact Lagrangian embedding of $\gamma \times [-1,1]$ with cylindrical ends and a Legendrian boundary that meets both components of the contact boundary $\partial X$.

When $\gamma \subset M$ moreover is contained inside an embedded ball $B \subset M$, then the Legendrian boundary components $\gamma \times \{\pm 1\}$ of the above Lagrangian cylinder are also contained inside balls $B_\pm \subset \partial X$ in the contact boundary of $X$. 
 \end{cor}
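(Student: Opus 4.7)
The plan is to pass to the normal form of Corollary~\ref{cor:completions}: the completion $(\hat{X},\lambda)$ is exact-symplectomorphic, by a map that is cylindrical at infinity, to the completion of $(M\times[-\epsilon,\epsilon]_s,\,\eta+s\beta_\nu)$, whose contact boundary is the disjoint union $(M,\eta-\epsilon\beta_\nu)\sqcup(M,\eta+\epsilon\beta_\nu)$. In this model the claimed Lagrangian cylinder is visible by direct inspection, and a trivial rescaling of $s$ converts $[-\epsilon,\epsilon]$ into $[-1,1]$.

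First I would compute the Liouville vector field of $\lambda=\eta+s\beta_\nu$ explicitly. Contracting $d\lambda=d\eta+ds\wedge\beta_\nu$ with the ansatz $\zeta_\eta+f\,\partial_s$, and using $\iota_{\zeta_\eta}d\eta=\eta$ together with the identity $\beta_\nu(\zeta_\eta)=0$ (which follows from $\beta_\nu(\zeta_\lambda)=0$ along $M$ in Proposition~\ref{prp:beta}, combined with $\zeta_\lambda|_M=\zeta_\eta$ and the fact that both $\beta_\nu$ and $\zeta_\eta$ are pulled back from $M$), one finds $f=s$. Thus the Liouville vector field in the model is the simple expression $\zeta=\zeta_\eta+s\,\partial_s$, which is tangent to $\gamma\times[-\epsilon,\epsilon]$ for any $\zeta_\eta$-orbit $\gamma\subset M$.

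For a closed orbit $\gamma$ of the Liouville flow on $M$, set $L\coloneqq\gamma\times[-\epsilon,\epsilon]$. The Lagrangian condition reduces to $d\lambda(\zeta_\eta,\partial_s)=-\beta_\nu(\zeta_\eta)=0$. Exactness is sharper: since $\eta(\zeta_\eta)=\iota_{\zeta_\eta}\iota_{\zeta_\eta}d\eta=0$ and $\beta_\nu(\zeta_\eta)=0$, the Liouville form $\lambda$ actually pulls back to zero on $L$. Cylindrical ends follow from the tangency of $\zeta$ to $L$, and the same vanishing shows that the boundary $\gamma\times\{\pm\epsilon\}$ is Legendrian in the contact boundary components $(M,\eta\pm\epsilon\beta_\nu)$. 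Transporting back via the exact symplectomorphism of Corollary~\ref{cor:completions} places the Legendrian boundary of $L$ in the two distinct components of $\partial X$, proving the first claim.

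For the final assertion, suppose $\gamma\subset B\subset M$ for an embedded three-ball $B$. Then $\gamma\times\{\pm\epsilon\}\subset B\times\{\pm\epsilon\}$, and each $B\times\{\pm\epsilon\}$ is an embedded ball inside the corresponding component of the contact boundary of the model. Restricting the cylindrical-at-infinity exact symplectomorphism to the boundary components yields contactomorphisms to the components of $\partial X$, carrying $B\times\{\pm\epsilon\}$ to the desired balls $B_\pm\subset\partial X$. I do not foresee a serious obstacle: once Theorem~\ref{thm:classification} and Corollary~\ref{cor:completions} are in hand, the argument reduces to the two vanishing identities $\beta_\nu(\zeta_\eta)=0$ and $\eta(\zeta_\eta)=0$ together with the tangency of $\zeta_\eta+s\,\partial_s$ to the cylinder.
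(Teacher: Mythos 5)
Your argument is essentially the paper's own: the corollary is obtained by passing to the normal form $(M\times[-\epsilon,\epsilon]_s,\eta+s\beta_\nu)$ of Corollary \ref{cor:completions} and observing that $\lambda$ pulls back to zero on $\gamma\times[-\epsilon,\epsilon]$ because $\eta(\zeta_\eta)=\beta_\nu(\zeta_\eta)=0$, which is exactly your central computation; the transport of the ball $B$ to balls $B_\pm$ in the boundary components is likewise the intended argument. One correction to your write-up: $d(\eta+s\beta_\nu)=d\eta+ds\wedge\beta_\nu+s\,d\beta_\nu$, so the Liouville vector field of the model is not $\zeta_\eta+s\,\partial_s$ but rather $\zeta_\eta+s(f\partial_s+\Xi)$ with $f=1+d\beta_\nu(\mathcal{C},\zeta_\eta)$ and $\Xi\in\ker\beta_\nu\cap\ker d\beta_\nu$, as in the proofs of Theorem \ref{thm:classification} and Lemma \ref{lma:equiv1}. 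The slip is harmless for your conclusions: the extra term $s\,d\beta_\nu(\zeta_\eta,\partial_s)$ in the Lagrangian check vanishes because $d\beta_\nu$ is pulled back from $M$, and the tangency of $\zeta_\lambda$ to the cylinder (hence the cylindrical ends) does not require your explicit formula, since for any Lagrangian $L$ with $\lambda|_{TL}=0$ one has $d\lambda(\zeta_\lambda,v)=\lambda(v)=0$ for all $v\in TL$, forcing $\zeta_\lambda\in (TL)^{d\lambda}=TL$.
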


Recall the notion from of a so-called \textbf{linear Liouville pair} $(\alpha_+,\alpha_-)$ which is a pair of one-forms $\alpha_\pm \in \Omega^1(M)$ for which $(1+s)\alpha_++(1-s)\alpha_-$ defines the Liouville form on a Liouville domain $M \times [-1,1]_s$. This notion was first considered in \cite{Mitsumatsu}, and further developed in \cite{Hozoori} and \cite{Massoni:Anosov}. The Liouville form above given by $\eta+s\beta$ for $|s| \ge 0$ sufficiently small is of this form; more precisely, it is induced by the contact pair $\alpha_\pm \coloneqq \frac{1}{ 2}(\eta\pm \epsilon \beta) \in \Omega^1(M)$, where $\epsilon>0$ is sufficiently small.
\begin{rmk}A general Liouville pair induces a skeleton that is not necessarily smooth; see Hozoori's recent work \cite{Hozoori:Skeleton}. The notion of a Liouville pair first appeared in work by Mitsumatsu \cite{Mitsumatsu}, who exhibited connections to Anosov flows on $M$. We refer to work by Hozoori \cite{Hozoori} for the full correspondence between Anosov flows on $M$ and a certain sub-class of the Liouville pairs. In related work by Massoni \cite{Massoni:Anosov} connections were also established between a different type of Liouville pairs and Anosov flows.
\end{rmk}

\subsection{Liouville-Hamiltonian structure}
\label{sec:Liouville}
 
Here we propose the definition of an abstract structure on an odd-dimensional manifold that models the symplectic structure near a smooth skeleton of a Liouville domain that is of codimension one; we call this a \textbf{Liouville-Hamiltonian structure}. In view of Theorem \ref{thm:classification}, this structure contains the data needed to recover the Liouville dynamics near a hypersurface $M \subset (X,\lambda)$ in the case when the Liouville vector field $\zeta_\lambda$ is nowhere characteristic along $M$ (and thus in particular tangent to it), up to a Hamiltonian vector field of a Hamiltonian that vanishes to second order along $M$. 
\begin{dfn}
\label{dfn:LHstructure}
A \emph{Liouville-Hamiltonian structure} is a triple $(M^{2n-1},\eta,\beta)$, $\eta,\beta \in \Omega^1(M)$, where
\begin{enumerate}
\item the pair $(M,d\eta)$ is a \emph{Hamiltonian structure}, i.e.~$d\eta$ is maximally non-degenerate;
\item $\ker \eta \supset \ker d\eta$; and
\item $\ker\beta \cap \ker d\eta =\{0\}.$
\end{enumerate}
\end{dfn}

\begin{lma} 
Let $(M,\eta,\beta)$ be a Liouville-Hamiltonian structure. Any one-form $\alpha \in \Omega^1(M)$ which satisfies $\ker \alpha \supset \ker d\eta$ can be represented by $\iota_{\zeta_\alpha}d\eta=\alpha$. If we, moreover, require that $\zeta_\alpha \in \ker\beta$, then this representation is unique, and we write $\zeta_{\alpha,\beta} \in \ker \beta$.
\end{lma}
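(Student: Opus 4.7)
The plan is to reduce the lemma to a fibre-wise linear-algebra statement and to match each of the axioms of Definition \ref{dfn:LHstructure} with a single step. Consider the bundle map $\Phi \colon TM \to T^*M$ defined by $\Phi(v) \coloneqq \iota_v d\eta$. Since $(M,d\eta)$ is a Hamiltonian structure (axiom (1)), the two-form $d\eta$ has maximal rank $2n-2$ on the odd-dimensional manifold $M^{2n-1}$, so $\ker d\eta \subset TM$ is a line sub-bundle and $\Phi$ has constant rank $2n-2$ at every point.

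First I would show that the image of $\Phi$ equals the annihilator of $\ker d\eta$ inside $T^*M$. The inclusion is immediate from the skew-symmetry of $d\eta$: for $v \in TM$ and $w \in \ker d\eta$ one has $(\iota_v d\eta)(w) = -(\iota_w d\eta)(v) = 0$. Both sub-bundles have rank $2n-2$, so they coincide. The hypothesis $\ker \alpha \supset \ker d\eta$ is exactly the assertion that $\alpha$ is a section of this annihilator, and hence of $\im \Phi$; because $\Phi$ has constant rank, a smooth vector field $\zeta_\alpha \in \Gamma(TM)$ satisfying $\iota_{\zeta_\alpha} d\eta = \alpha$ then exists by choosing any smooth splitting of $\Phi$ over $\im \Phi$ (locally, and patched by a partition of unity if desired).

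For the uniqueness claim, note that any two solutions of $\iota_{\zeta_\alpha} d\eta = \alpha$ differ by a section of $\ker d\eta$, so the solution set is an affine sub-bundle modeled on the line bundle $\ker d\eta \subset TM$. Pick a local generator $\mathcal{C}$ of $\ker d\eta$; axiom (3), which says $\ker \beta \cap \ker d\eta = \{0\}$, forces $\beta(\mathcal{C})$ to be nowhere zero. Given any particular solution $\zeta_\alpha^0$, the prescription
$$ \zeta_{\alpha,\beta} \coloneqq \zeta_\alpha^0 - \frac{\beta(\zeta_\alpha^0)}{\beta(\mathcal{C})}\,\mathcal{C} $$
then picks out the unique solution lying in $\ker \beta$. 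The right-hand side is manifestly invariant under rescaling $\mathcal{C}$, so $\zeta_{\alpha,\beta}$ is a globally well-defined smooth vector field on $M$.

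No step presents a genuine obstacle; the content of the proof is simply to line up axiom (1) with the constant-rank property of $\Phi$ (giving the annihilator description of $\im \Phi$), and axiom (3) with the transversality that selects a unique splitting of the affine line of solutions. Axiom (2) plays no role in this particular lemma (it is the ingredient that guarantees $\alpha = \eta$ is itself admissible, thus making $\zeta_\eta$ well-defined, but the lemma as stated applies to any $\alpha$ satisfying the vanishing condition on $\ker d\eta$).
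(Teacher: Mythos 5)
Your argument is correct and complete: the annihilator description of the image of $v\mapsto\iota_v d\eta$ (from maximal non-degeneracy) gives existence, and axiom (3) gives the unique normalisation in $\ker\beta$, exactly as one would expect. The paper states this lemma without proof as a routine verification, and your write-up is precisely the standard argument it implicitly relies on (including the correct observation that axiom (2) is not needed here).
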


\begin{dfn}
 To any Liouville-Hamiltonian structure $(M,\eta,\beta)$ there are two canonically associated vector fields:
\begin{enumerate}
\item The \emph{Liouville vector field $\zeta$}, which is given by $\zeta=\zeta_{\eta,\beta} \in \ker \beta$; and
\item The \emph{characteristic vector-field $\mathcal{C}$}, which is determined uniquely by the conditions $\mathcal{C}=\mathcal{C}_{\eta,\beta} \in \ker d \eta$ and $\beta(\mathcal{C}_{\eta,\beta})=1$.
\end{enumerate}
\end{dfn}

\begin{rmk}
 \begin{enumerate}
 \item If, moreover, the relation $\ker d\beta \supset \ker d\eta$ is satisfied, the triple $(M,d\eta,\beta)$ is a so-called \textbf{stable Hamiltonian structure}. However, as Lemma \ref{lma:stabham} below shows, $M$ cannot be closed in this case.
 \item Condition (3) in Definition \ref{dfn:LHstructure} implies that $\beta \wedge (d\eta)^{\wedge(n-1)}$ is a volume form on $M$, in particular $M$ is orientable.
 \item The Liouville vector field $\zeta$ is tangent to $\ker d\eta$ precisely at its critical points; c.f.~the notion of nowhere characteristic Liouville flow in Definition \ref{dfn:nowherechar}.
 \item Whenever Parts (1) and (2) of Definition \ref{dfn:LHstructure} are satisfied for some $\eta \in \Omega^1(M)$, there exist plenty of choices $\beta \in \Omega^1(M)$ that make $(M,\eta,\beta)$ into a Liouville-Hamiltonian structure. Given one such choice $\beta$, all other choices can be written of the form $\beta'=\pm e^g\beta+\iota_{\Xi}d\eta$ with $\Xi \in \ker \beta$ and $e^g \colon M \to \R$ smooth. The Liouville vector fields induced by $\beta$ and $\beta'$ agree if and only if $\Xi \in \ker \eta \cap \ker \beta$.
 \end{enumerate}
\end{rmk}

The first basic examples of Liouville-Hamiltonian structures are constructed by stabilising Liouville domains by an $\R$-factor, and are thus open. Closed examples are less easy to construct; see Subsection \ref{sec:examples} for some classic examples. 
\begin{ex} For $(X^{2(n-1)},\lambda)$ an exact Liouville manifold, we consider the open manifold $M=X^{2(n-1)} \times \R_z$ with $\eta=\pi_X^*\lambda$ and $\beta=dz$. Note that $(M,\eta+\beta)$ is the so-called \textbf{contactisation} of $(X,\lambda)$, which is a contact manifold with contact form $\eta+\beta$.
\end{ex}
However, as the following lemma shows, in the case when $M$ is closed, a Liouville-Hamiltonian structure is never even a stable Hamiltonian structure (which is a property that is stronger than the contact condition for the one-forms $\eta+t\beta$ for all small $t$). The strengthened version of this statement is also highly relevant here, and was pointed out to the author by Hozoori in private communication.
\begin{lma}
\label{lma:stabham}
For a Liouville-Hamiltonian structure $(M,\eta,\beta)$ where $M$ is closed and of dimension $\dim M=2n-1$, the identity $d\beta(\mathcal{C},\zeta_\eta)=n-1$ must hold somewhere. In particular, when $n \ge 2$, the inclusion $\langle \mathcal{C} \rangle=\ker d\eta \subset \ker d\beta$ fails somewhere, i.e.~$(M,d\eta,\beta)$ is not a stable Hamiltonian structure.
\end{lma}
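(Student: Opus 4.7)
The plan is to integrate a Lie derivative of the canonical volume form $\Omega \coloneqq \beta \wedge (d\eta)^{n-1}$ over $M$. Taking the Lie derivative along the Liouville vector field $\zeta = \zeta_{\eta,\beta} \in \ker \beta$ (which satisfies $\iota_\zeta d\eta = \eta$, so it is a legitimate representative of $\zeta_\eta$), Stokes' theorem gives
\[
\int_M L_\zeta \Omega \;=\; \int_M d(\iota_\zeta \Omega) \;=\; 0,
\]
so the idea is to rearrange this identity into a pointwise constraint on $d\beta(\mathcal{C}, \zeta_\eta)$.

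First I would evaluate $L_\zeta \Omega$ via Cartan's magic formula. Using $\iota_\zeta d\eta = \eta$ one has $L_\zeta d\eta = d\eta$ and therefore $L_\zeta (d\eta)^{n-1} = (n-1)(d\eta)^{n-1}$; since $\beta(\zeta)=0$, one also has $L_\zeta \beta = \iota_\zeta d\beta$. Combining these,
\[
L_\zeta \Omega \;=\; (n-1)\,\Omega + \iota_\zeta d\beta \wedge (d\eta)^{n-1}.
\]
Next I would use the elementary pointwise identity $\alpha \wedge (d\eta)^{n-1} = \alpha(\mathcal{C})\,\Omega$ valid for any $1$-form $\alpha$; this follows by contracting both sides with $\mathcal{C}$ and invoking $\iota_\mathcal{C} d\eta = 0$ together with $\beta(\mathcal{C}) = 1$. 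Applied to $\alpha = \iota_\zeta d\beta$, this identifies the right-most term above as $-d\beta(\mathcal{C}, \zeta)\,\Omega$, and the Stokes identity reduces to
\[
\int_M \bigl((n-1) - d\beta(\mathcal{C}, \zeta)\bigr)\,\Omega \;=\; 0.
\]

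Because $\Omega$ is a nowhere-vanishing volume form (by condition $(3)$ of Definition \ref{dfn:LHstructure}), applying the identity on each connected component of $M$ forces the continuous function $(n-1) - d\beta(\mathcal{C}, \zeta)$ to vanish at some point of that component; since $d\beta(\mathcal{C}, \mathcal{C}) = 0$, the same conclusion holds with $\zeta_\eta$ in place of $\zeta$. The stable Hamiltonian consequence then drops out by contradiction: an inclusion $\ker d\eta \subset \ker d\beta$ would force $\iota_\mathcal{C} d\beta \equiv 0$ and thus $d\beta(\mathcal{C}, \zeta_\eta) \equiv 0$, which is incompatible with its taking the value $n - 1 \ge 1$ somewhere once $n \ge 2$. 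The only step that deserves more than one line of justification is the algebraic identity relating $\alpha \wedge (d\eta)^{n-1}$ to $\alpha(\mathcal{C})\,\Omega$, but this is a clean pointwise linear-algebra statement on the one-dimensional space of top forms, which exploits that $\mathcal{C}$ spans the one-dimensional kernel of $d\eta$.
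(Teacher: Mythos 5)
Your proposal is correct and follows essentially the same route as the paper: compute the Lie derivative of the volume form $\Omega=\beta\wedge(d\eta)^{\wedge(n-1)}$ along the Liouville vector field, observe that its integral over the closed manifold vanishes by Stokes, and conclude that the pointwise coefficient $(n-1)-d\beta(\mathcal{C},\zeta_\eta)$ must vanish somewhere. The paper phrases the computation via the flow $\phi^t$ and $(\phi^t)^*d\eta=e^t\,d\eta$ rather than Cartan's formula directly, but this is only a cosmetic difference; your intermediate identity $\alpha\wedge(d\eta)^{\wedge(n-1)}=\alpha(\mathcal{C})\,\Omega$ is a valid and clean way to make the final rearrangement explicit.
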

\begin{proof}
Consider the volume form $\Omega=\beta \wedge (d\eta)^{\wedge (n-1)}$ on $M$ and let $\phi^t$ be the Liouville flow generated by $\zeta_\lambda$. Using Cartan's formula
$$\frac{d}{dt}(\phi^t)^*\beta=\iota_{\zeta_\eta}d\beta+d\iota_{\zeta_\eta}\beta=\iota_{\zeta_\eta}d\beta$$
and the identity $(\phi^t)^*d\eta=e^t\,d\eta$, we compute
\begin{eqnarray}
\lefteqn{\left.\frac{d}{dt} (\phi^t)^*\Omega\right|_{t=0}=}\\
&=&\left.\frac{d}{dt}\left((\phi^t)^*\beta \wedge e^{(n-1)t}(d\eta)^{\wedge(n-1)}\right)\right|_{t=0}\\
&=&\left(\iota_{\zeta_\eta}d\beta+(n-1)\beta\right)\wedge (d\eta)^{\wedge (n-1)}.
\end{eqnarray}
Since the total integral of this top-form must vanish, it follows that the form itself must vanish at some point. Vanishing at a point is in turn equivalent to the equality $d\beta(\mathcal{C},\zeta_\eta)=n-1$ being satisfied there.
 \end{proof}
Next we introduce a particularly well-behaved type of Liouville-Hamiltonian structures. Below we will show that they model Liouville vector fields that are repelling along a hypersurface; see Lemma \ref{lma:equiv1} and Proposition \ref{prp:liouville}.
 
\begin{dfn}
\label{dfn:lindef}
 We say that the Liouville-Hamiltonian structure $(M,\eta,\beta)$ is of \emph{linear contact-deformation type} if the path $\eta+t\beta \in \Omega^1(M)$ of one-forms satisfies the property that the top-form
$$ \omega_t \coloneqq (\eta+t\beta) \wedge (d\eta+t\,d\beta)^{\wedge(n-1)} \in \Omega^{2n-1}(M)$$
has a $t$-derivative which at $t=0$ is a volume-form $\left.\frac{d}{dt}\omega_t\right|_{t=0}>0$ that induces the same orientation as $\beta \wedge (d\eta)^{\wedge(n-1)}$.
\end{dfn}
\begin{rmk}
 
\begin{enumerate}
 \item Since $\ker \eta \supset \ker d\eta$ we have
$$ \eta \wedge (d\eta)^{\wedge(n-1)}=0.$$
In other words, the above top-form $\omega_t$ automatically has vanishing 0:th order term in $t$. In particular, being of linear contact-deformation type implies that $\eta+t\beta$ are contact forms whenever $|t| >0$ is small but non-zero.
\item For any $\epsilon>0$ sufficiently small, the pair of one-forms $\alpha_\pm \coloneqq \eta\pm \epsilon \beta$ yields a bi-contact structure as defined by Mitsumatsu \cite{Mitsumatsu} if, in addition, $\eta \neq 0$ is assumed to be non-zero everywhere. Indeed, the orientations induced by the two contact forms $\alpha_\pm$ differ (see e.g. Part (2) of Lemma \ref{lma:equiv1} below), and $\ker \alpha_+ \neq \ker \alpha_-$ holds everywhere.
\item In the case when $\dim M=3$ we have $\eta\wedge d\eta=0$, which means that $\ker \eta$ defines a (possibly singular) two-dimensional foliation on $M$. When $\eta \neq 0$ is everywhere non-vanishing, this foliation is non-singular, and $(M,\eta,\beta)$ being of linear contact-deformation type is equivalent to the following property: the family $\eta+t\eta$ of one-forms gives a \emph{linear deformation (of this plane field) into a positive contact structure} in the sense of the definition by Eliashberg--Thurston in \cite[Chapter 2]{Confoliations}. This is also our motivation for the choice of terminology in Definition \ref{dfn:lindef}.
\end{enumerate}
\end{rmk}
\begin{lma}
\label{lma:equiv1}
 
The following statements are equivalent for a Liouville-Hamiltonian structure $(M,\eta,\beta)$.
\begin{enumerate}
\item $(M,\eta,\beta)$ is of linear contact-deformation type in the sense of Definition \ref{dfn:lindef};
\item $\beta \wedge (d\eta)^{\wedge(n-1)}+(n-1)\eta \wedge d\beta \wedge (d\eta)^{\wedge(n-2)}> 0 $ is a volume form whose orientation agrees with $\beta \wedge (d\eta)^{\wedge(n-1)}$;
\item The inequality $d\beta(\mathcal{C},\zeta_\eta) > -1$ holds everywhere; and
\item The two-form
$$\omega=d(\eta+s\beta)=d\eta+ds \wedge \beta+s\,d\beta \in \Omega^2(M \times \R_s)$$ which is symplectic on $M \times [-\epsilon,\epsilon]$ for $\epsilon>0$ sufficiently small, has an $[ds]$-repelling Liouville vector field for the Liouville form $\eta+s\beta$.
\end{enumerate}
\end{lma}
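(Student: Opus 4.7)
The plan is to establish the chain $(1) \Leftrightarrow (2) \Leftrightarrow (3) \Leftrightarrow (4)$.

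For $(1) \Leftrightarrow (2)$, I expand $\omega_t = (\eta + t\beta) \wedge (d\eta + t\,d\beta)^{\wedge(n-1)}$ as a polynomial in $t$. The constant term $\eta \wedge (d\eta)^{\wedge(n-1)}$ vanishes identically: contracting with $\mathcal{C} \in \ker d\eta$ yields $\eta(\mathcal{C})(d\eta)^{\wedge(n-1)} = 0$ by condition (2) of Definition \ref{dfn:LHstructure}, and since $(d\eta)^{\wedge(n-1)}$ is non-vanishing this forces the top-form itself to be zero. Differentiating in $t$ at $t=0$ then produces exactly the form appearing in (2), and orientation comparison with $\beta \wedge (d\eta)^{\wedge(n-1)}$ is preserved verbatim.

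The heart of the argument is the pointwise algebraic identity
\[
\beta \wedge (d\eta)^{\wedge(n-1)} + (n-1)\,\eta \wedge d\beta \wedge (d\eta)^{\wedge(n-2)} = \bigl(1 + d\beta(\mathcal{C}, \zeta_\eta)\bigr)\, \beta \wedge (d\eta)^{\wedge(n-1)},
\]
which immediately gives $(2) \Leftrightarrow (3)$ since $\beta \wedge (d\eta)^{\wedge(n-1)}$ is a nowhere-zero volume form by condition (3) of Definition \ref{dfn:LHstructure}. To verify the identity I work pointwise in an adapted basis. The splitting $T_xM = \langle \mathcal{C} \rangle \oplus \ker\beta$ is legitimate because $\beta(\mathcal{C}) = 1$, and $d\eta$ restricts to a symplectic form on $\ker\beta$ since $\ker d\eta = \langle \mathcal{C} \rangle$ is transverse to $\ker\beta$. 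Using the convention preceding Definition \ref{dfn:LHstructure} which places $\zeta_\eta$ inside $\ker\beta$, I choose a symplectic basis $\{e_i, f_i\}_{i=1}^{n-1}$ of $(\ker\beta, d\eta|_{\ker\beta})$ with $e_1 = \zeta_\eta$. Evaluating both sides on the basis $(\mathcal{C}, e_1, f_1, \ldots, e_{n-1}, f_{n-1})$, and noting that $\eta$ has dual $f_1^{*}$ in these coordinates, one sees that only the $\mathcal{C}^{*} \wedge e_1^{*}$ coefficient of $d\beta$ survives on the left-hand side; this coefficient is $d\beta(\mathcal{C}, \zeta_\eta)$, matching the right-hand side.

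Finally, $(3) \Leftrightarrow (4)$ follows from Theorem \ref{thm:classification} applied with the trivial choice $G \equiv 0$: the Liouville form $\eta + s\beta$ then has a Liouville vector field with $\partial_s$-component equal to $sF$, where $F = 1 + d\beta(\mathcal{C}, \zeta_\eta)$; and Definition \ref{dfn:skeleton} identifies the $[ds]$-repelling condition along $M = \{s = 0\}$ with $F|_M > 0$. The sub-claim that $\omega = d(\eta + s\beta)$ is actually symplectic in a neighbourhood of $M$ follows from computing $\omega^n|_{s=0} = n\,ds \wedge \beta \wedge (d\eta)^{\wedge(n-1)}$ and invoking condition (3) of Definition \ref{dfn:LHstructure} once more. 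The principal obstacle is the algebraic identity underpinning $(2) \Leftrightarrow (3)$; the remaining equivalences are essentially formal once Theorem \ref{thm:classification} is on the table, but one must track orientations carefully since conditions (1) and (2) both demand agreement with the specific orientation determined by $\beta \wedge (d\eta)^{\wedge(n-1)}$, rather than mere non-degeneracy.
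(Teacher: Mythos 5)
Your proposal is correct and follows essentially the same route as the paper: the same chain of equivalences, with $(1)\Leftrightarrow(2)$ identical, with $(2)\Leftrightarrow(3)$ reduced in both cases to extracting the coefficient $d\beta(\mathcal{C},\zeta_\eta)$ from the top-form (the paper by contracting with $\mathcal{C}$ and then $\zeta_\eta$, you by evaluating on an adapted symplectic basis --- your pointwise identity is correct and in fact slightly cleaner), and with $(3)\Leftrightarrow(4)$ delegated to Part (2) of Theorem \ref{thm:classification} rather than recomputed as the paper does. The only point to patch is that the basis choice $e_1=\zeta_\eta$ is unavailable at points where $\zeta_\eta$ vanishes; but there $\eta=\iota_{\zeta_\eta}d\eta=0$ as well, so both sides of your identity reduce to $\beta\wedge(d\eta)^{\wedge(n-1)}$ and the claim holds trivially.
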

\begin{proof}
 
(1) $\Leftrightarrow$ (2): The term of lowest order in $t$
\begin{eqnarray*} \lefteqn{(\eta + t\beta) \wedge (d\eta+t\,d\beta)^{\wedge(n-1)}=}\\
&=&t\left(\beta \wedge (d\eta)^{\wedge(n-1)}+(n-1)\eta \wedge d\beta \wedge (d\eta)^{\wedge(n-2)}\right)+\mathbf{O}(t^2)
\end{eqnarray*}
is of order one, and agrees with the expression in (2). The equivalence is thus immediate.

(2) $\Leftrightarrow$ (3): First, consider the contraction of the top form 
\begin{eqnarray*}
 \lefteqn{\iota_{\mathcal{C}}(\beta \wedge (d\eta)^{\wedge(n-1)}+(n-1)\eta \wedge d\beta \wedge (d\eta)^{\wedge(n-2)})=}\\
 &=& (d\eta)^{\wedge(n-1)}-(n-1)\eta\wedge \iota_{\mathcal{C}}d\beta \wedge (d\eta)^{\wedge(n-2)}
 \end{eqnarray*}
with $\mathcal{C}$. Further contracting this form with $\zeta_\eta$ gives us
$$ (n-1)\eta \wedge (d\eta)^{\wedge(n-2)}-(n-1)d\beta(\zeta_\eta,\mathcal{C}) \cdot \eta\wedge (d\eta)^{\wedge(n-2)},$$
since $\iota_{\zeta_\eta}d\eta =\eta$. We thus see that $d\beta(\zeta_\eta,\mathcal{C}) < 1$ is equivalent to the expression in (2) being a volume form of the correct orientation.

(3) $\Leftrightarrow$ (4): The Liouville vector field of $\lambda=\eta+s\beta$ can be written as
$$ \zeta_\lambda=\zeta_\eta+s(f\partial_s+\Xi+g\mathcal{C})$$
where $V=f\partial_s+\Xi+g\mathcal{C}$ satisfies $\iota_{sV} d\lambda+\iota_{\zeta_\eta}s\,d\beta=s\beta$, and $f,g \colon M \to \R$, with $\Xi \in \ker \beta \subset \Gamma(TM)$.

Since $d\lambda=d\eta+ds\wedge \beta+s\,d\beta$ we get
$$s\beta=\iota_{sV} d\lambda+\iota_{\zeta_\eta}s\,d\beta=s(f\beta+\iota_\Xi d\eta-g ds+s\iota_\Xi d\beta+sg\iota_\mathcal{C}d\beta)+\iota_{\zeta_\eta}s\,d\beta$$
and immediately conclude, by comparing orders of $s$, that $g=0$ and consequently $\Xi \in \ker d\beta$. In other words
$$\beta=f\beta+\iota_\Xi d\eta+\iota_{\zeta_\eta}d\beta$$
where $\Xi \in \ker \beta \cap \ker d\beta$.

The property of the Liouville vector field $\zeta_\lambda$ being $[ds]$-repelling is equivalent to $f>0$ being positive, which is equivalent to $\iota_{\zeta_\eta}d\beta=h\beta-\iota_\Xi d\eta$ where $h<1$. This gives the sought equivalence.
\end{proof}
 
There are Liouville-Hamiltonian structures $(M,\eta,\beta)$ which are of linear contact-deformation type of a particularly strong form, as described in the following lemma.
\begin{lma}
The following statements are equivalent for a Liouville-Hamiltonian structure $(M,\eta,\beta)$.
\begin{enumerate}
 \item The one-form $\beta$ satisfies $d\beta(\mathcal{C},\zeta_\eta)>0$ (in particular $\zeta_\eta$ is non-zero); 
 \item The following top-form on $M$ is non-vanishing 
 $$\eta \wedge d\beta \wedge (d\eta)^{\wedge(n-2)}>0$$
 and induces the same orientation as $\beta \wedge (d\eta)^{\wedge(n-1)}>0$
 \end{enumerate}
In either case, $(M,\eta,\beta)$ is of linear contact-deformation type, and has the additional property that the Reeb vector field of the contact form $\eta+t\beta \in \Omega^1(M)$ is nowhere contained inside $\langle \mathcal{C},\zeta_\eta\rangle \subset TM$ for all small $t \neq 0$.
\end{lma}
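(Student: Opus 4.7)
My plan is to reduce the equivalence of (1) and (2) to the single pointwise identity
\[
\eta \wedge d\beta \wedge (d\eta)^{\wedge(n-2)} \;=\; \frac{d\beta(\mathcal{C},\zeta_\eta)}{n-1}\,\beta \wedge (d\eta)^{\wedge(n-1)}.
\]
Granting this, both sides are top-forms on $M$ and the prefactor is exactly the quantity in (1), so positivity of one is positivity of the other. The additional claim that $(M,\eta,\beta)$ is of linear contact-deformation type is then immediate from Lemma \ref{lma:equiv1}(3), since $d\beta(\mathcal{C},\zeta_\eta)>0>-1$.

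To establish the identity, I would work pointwise using the splitting $T_pM = \langle \mathcal{C}\rangle \oplus \ker\beta$ afforded by condition (3) of Definition \ref{dfn:LHstructure}. On the $(2n-2)$-dimensional subspace $\ker\beta$ the two-form $d\eta$ is symplectic, so $(d\eta)^{\wedge(n-1)}$ restricts to its symplectic volume. Decomposing $d\beta = \beta \wedge \iota_\mathcal{C} d\beta + \mu$ with $\iota_\mathcal{C}\mu=0$, the ``horizontal'' contribution $\eta \wedge \mu \wedge (d\eta)^{\wedge(n-2)}$ is a top-form on $M$ whose contraction with $\mathcal{C}$ annihilates every factor, and hence must itself vanish. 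What remains is $-\beta \wedge \bigl(\eta \wedge \iota_\mathcal{C} d\beta \wedge (d\eta)^{\wedge(n-2)}\bigr)$, whose bracketed factor is a top-form on $\ker\beta$. The standard linear-symplectic identity $\iota_X\Omega \wedge \iota_Y\Omega \wedge \Omega^{m-1} = \tfrac{1}{m}\Omega(X,Y)\,\Omega^m$ on a symplectic vector space $(W,\Omega)$ of dimension $2m$, applied with $X=\zeta_\eta$ (so that $\iota_X d\eta = \eta$) and $Y$ characterised by $\iota_Y d\eta = \iota_\mathcal{C} d\beta$ on $\ker\beta$, then produces the scalar $-\tfrac{1}{n-1}d\beta(\mathcal{C},\zeta_\eta)$; absorbing the outer sign recovers the claimed formula.

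For the Reeb claim, note first that (1) forces $\zeta_\eta \neq 0$, and since $\zeta_\eta \in \ker\beta$ while $\mathcal{C}\notin \ker\beta$, the vectors $\mathcal{C}$ and $\zeta_\eta$ are linearly independent everywhere. Assume for contradiction that the Reeb field $R_t$ of $\eta+t\beta$ satisfies $R_t = a\mathcal{C} + b\zeta_\eta$. Expanding $\iota_{R_t}(d\eta+t\,d\beta)=0$ gives $b\eta + ta\,\iota_\mathcal{C} d\beta + tb\,\iota_{\zeta_\eta}d\beta = 0$; contracting with $\mathcal{C}$ collapses this to $-tb\,d\beta(\mathcal{C},\zeta_\eta)=0$, forcing $b=0$. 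The normalisation $(\eta+t\beta)(R_t)=1$ then forces $a=1/t$, and the Reeb equation reduces to $\iota_\mathcal{C} d\beta = 0$, in contradiction with $(\iota_\mathcal{C} d\beta)(\zeta_\eta)=d\beta(\mathcal{C},\zeta_\eta)>0$.

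The main obstacle is the identity in the second paragraph. Once one recognises that the ``horizontal'' piece must vanish by the kernel argument, the residual computation is essentially linear-symplectic bookkeeping inside $\ker\beta$; the real delicacy lies in correctly tracking the factor $1/(n-1)$ from the symplectic volume identity, and the overall sign that arises when commuting $\beta$ through $\eta$ and converting $\iota_Y d\eta|_{\ker\beta} = \iota_\mathcal{C} d\beta|_{\ker\beta}$ into a value of $d\eta(\zeta_\eta,Y)$.
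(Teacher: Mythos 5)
Your proposal is correct, and both halves check out: the identity
\[
\eta \wedge d\beta \wedge (d\eta)^{\wedge(n-2)} \;=\; \frac{d\beta(\mathcal{C},\zeta_\eta)}{n-1}\,\beta \wedge (d\eta)^{\wedge(n-1)}
\]
is exactly the proportionality that underlies the paper's argument, and your Reeb-field computation coincides with the paper's observation that $\langle \mathcal{C},\zeta_\eta\rangle \cap \ker(d\eta+t\,d\beta)=\{0\}$ for small $t \neq 0$. Where you differ is in how the identity is obtained. The paper (referring back to the proof of the equivalence (2) $\Leftrightarrow$ (3) in Lemma \ref{lma:equiv1}) contracts the relevant top-form successively with $\mathcal{C}$ and then with $\zeta_\eta$, using $\iota_{\mathcal{C}}d\eta=0$, $\beta(\mathcal{C})=1$ and $\iota_{\zeta_\eta}d\eta=\eta$ to reduce everything to a multiple of $\eta\wedge(d\eta)^{\wedge(n-2)}$, and then reads off the sign. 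You instead split $d\beta=\beta\wedge\iota_{\mathcal{C}}d\beta+\mu$ with $\iota_{\mathcal{C}}\mu=0$, kill the $\mu$-term by the observation that a top-form annihilated by the nonvanishing vector $\mathcal{C}$ is zero, and finish with the linear-symplectic identity $\iota_X\Omega\wedge\iota_Y\Omega\wedge\Omega^{m-1}=\tfrac1m\Omega(X,Y)\Omega^m$ on $(\ker\beta, d\eta|_{\ker\beta})$. Your route is slightly longer but arguably more transparent, since it isolates the scalar of proportionality once and for all rather than verifying positivity through a double contraction (which, strictly speaking, requires the small remark that $\mathcal{C}$ and $\zeta_\eta$ are linearly independent wherever $\zeta_\eta\neq 0$, and that the identity is trivial where $\zeta_\eta=0$ since $\eta=\iota_{\zeta_\eta}d\eta$ vanishes there -- a point your pointwise identity handles uniformly). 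The only thing worth making explicit in your last paragraph is that $\eta+t\beta$ is genuinely a contact form for small $t\neq 0$ before one speaks of its Reeb field; this follows from the linear contact-deformation property you have already deduced, so the gap is purely expository.
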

\begin{proof}
(1) $\Leftrightarrow$ (2): This follows from the same computation as in the proof of the equivalence (2) $\Leftrightarrow$ (3) in Lemma \ref{lma:equiv1}.

The last consequence follows since the Reeb vector field is in $\ker (d\eta+t d\beta)$, where (1) implies that $\langle \mathcal{C},\zeta_\eta\rangle \cap \ker (d\eta+t d\beta )=\{0\}$ when $t \neq 0$; to that end, recall that $ \ker d\eta = \R\mathcal{C} \subset \Gamma(TM)$.
\end{proof}
The following statement is an easy consequence of the main theorem of Zung \cite[Theorem 1]{Zung}, which contains the non-trivial part of the argument.
\begin{prp}
\label{prp:hypertaut}
Assume that $(M^3,\eta,\beta)$ is a Hamiltonian-Liouville structure with $\dim M =3$ and $\eta \neq 0$ is satisfied everywhere. The inequality $d\beta(\mathcal{C}_{\eta,\beta},\zeta_{\eta,\beta})>0$ implies that the foliation $\mathcal{F}$ defined by $T\mathcal{F}=\ker \eta$ has no invariant measure. Conversely, if $\mathcal{F}$ has no invariant measure, then we can find some $\beta'=e^g\beta+\iota_\Xi d\eta \in \Omega^1(M)$ for which $d\beta(\mathcal{C}_{\eta,\beta'},\zeta_{\eta,\beta'})>0$.
\end{prp}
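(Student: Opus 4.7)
My plan is to first identify the codimension-one foliation encoded by the hypothesis, then handle the two directions separately: the forward one via a short Ruelle--Sullivan current argument, and the converse as an application of Zung's theorem. In dimension three, since $\mathcal{C} \in \ker \eta \cap \ker d\eta$, the 3-form $\eta \wedge d\eta$ vanishes identically (every wedge product of $\eta, d\eta$ with three vectors contains $\mathcal{C}$ or pairs on which both $\eta$ and $d\eta$ vanish), so by Frobenius the distribution $\ker \eta$ integrates to a codimension-one foliation $\mathcal{F}$ with $T\mathcal{F} = \ker \eta$. Both $\mathcal{C}$ and $\zeta_{\eta,\beta}$ lie in $T\mathcal{F}$, since $\eta(\zeta_{\eta,\beta}) = d\eta(\zeta_{\eta,\beta},\zeta_{\eta,\beta}) = 0$, and they form a pointwise frame of $T\mathcal{F}$ because $\beta(\mathcal{C}) = 1$ while $\beta(\zeta_{\eta,\beta}) = 0$. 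Orienting $M$ by the nowhere-vanishing volume form $\beta \wedge d\eta$, a direct evaluation on the frame $(V,\mathcal{C},\zeta_{\eta,\beta})$ with $\eta(V) = 1$ shows that $\mathcal{C} \wedge \zeta_{\eta,\beta}$ represents the positive leaf orientation.

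For the forward implication, the inequality $d\beta(\mathcal{C},\zeta_{\eta,\beta}) > 0$ is precisely the statement that $d\beta|_{T\mathcal{F}}$ is a strictly positive leafwise area form. I would argue by contradiction: a non-trivial transverse invariant measure on $\mathcal{F}$ corresponds, via the Ruelle--Sullivan construction, to a non-zero closed de Rham 2-current $C$ tangent to $\mathcal{F}$ that pairs strictly positively with every positive leafwise 2-form. Closedness together with Stokes yields $\langle C, d\beta \rangle = \langle \partial C, \beta \rangle = 0$, while the positivity of $d\beta|_{T\mathcal{F}}$ forces $\langle C, d\beta \rangle > 0$; this is the desired contradiction.

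For the converse, I invoke Zung's theorem \cite[Theorem 1]{Zung}, which in our setting asserts that the absence of a transverse invariant measure on $\mathcal{F}$ is equivalent to the existence of some $\gamma \in \Omega^1(M)$ for which $d\gamma|_{T\mathcal{F}}$ is a strictly positive leafwise area form. The remaining task is to package $\gamma$ into the prescribed shape $\beta' = e^g\beta + \iota_\Xi d\eta$, $\Xi \in \ker\beta$, in such a way that $d\beta'|_{T\mathcal{F}}$ is itself a strictly positive leafwise area form; from this the desired inequality $d\beta'(\mathcal{C}_{\eta,\beta'},\zeta_{\eta,\beta'}) > 0$ follows automatically, because $\mathcal{C}_{\eta,\beta'}$ is a positive rescaling of $\mathcal{C}$ and $\zeta_{\eta,\beta'}$ differs from $\zeta_{\eta,\beta}$ only by a multiple of $\mathcal{C}$, operations that preserve the leaf orientation. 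The parametrisation $(g,\Xi) \mapsto e^g \beta + \iota_\Xi d\eta$ is a bijection onto the set of 1-forms $\beta'$ with $\beta'(\mathcal{C}) > 0$, since $\iota_\Xi d\eta$ annihilates $\mathcal{C}$ and the constraint $\Xi \in \ker\beta$ removes the residual ambiguity along $\mathcal{C}$; thus the construction amounts to finding a linear combination of $\beta$ and (a suitable modification of) $\gamma$ for which $\beta'(\mathcal{C})$ remains strictly positive. The main obstacle lies in executing this combination: the naive choice $\beta' = \beta + N\gamma$ for large $N$ may destroy the positivity $\beta'(\mathcal{C}) > 0$ when $\gamma(\mathcal{C})$ is negative somewhere, so one must first replace $\gamma$ by $\gamma + df$ for a suitable $f$, preserving $d\gamma|_{T\mathcal{F}}$ while controlling $\gamma(\mathcal{C})$; the solvability of the resulting ODE along characteristic orbits is where the non-trivial dynamical content of Zung's theorem enters.
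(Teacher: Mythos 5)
Your forward direction is correct and in fact more self-contained than the paper's: the paper simply verifies that $\tilde\beta=\beta$ satisfies the two conditions $\eta\wedge d\tilde\beta>0$ and $\tilde\beta\wedge d\eta\ge 0$ appearing in the characterisation it cites from \cite{Zung} and \cite{Massoni:Taut}, whereas you reprove the needed implication directly via the Ruelle--Sullivan current. Your identification of $(\mathcal{C},\zeta_{\eta,\beta})$ as a positively oriented leafwise frame and the Stokes argument against a closed foliation cycle are both fine.

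The converse, however, has a genuine gap, and you have located it yourself without closing it. You invoke the equivalence only in the weak, Sullivan-type form (\emph{no invariant measure iff there exists $\gamma$ with $d\gamma|_{T\mathcal{F}}>0$ leafwise}), and then need to upgrade $\gamma$ to a form $\beta'$ that additionally satisfies $\beta'(\mathcal{C})>0$. The repair you sketch --- replace $\gamma$ by $\gamma+df$ to control $\gamma(\mathcal{C})$, then take $\beta+N\gamma$ --- is obstructed in general: if $\sigma$ is a periodic orbit of the characteristic vector field $\mathcal{C}$ (which is tangent to the leaves) with $\int_\sigma\gamma<0$, then $\int_\sigma(\gamma+df)=\int_\sigma\gamma<0$ for every $f$, so $(\gamma+df)(\mathcal{C})$ cannot be made everywhere non-negative; adding $h\eta$ or $\iota_\Xi d\eta$ does not help either, since both annihilate $\mathcal{C}$ and the latter spoils leafwise positivity. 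So the ``ODE along characteristic orbits'' you defer to is not solvable for an arbitrary $\gamma$ produced by the weak statement. What is actually needed --- and what the paper quotes from \cite[Theorem 1]{Zung} and \cite[Proposition 4.5]{Massoni:Taut} --- is the stronger conclusion that absence of an invariant measure yields $\tilde\beta$ with \emph{both} $\eta\wedge d\tilde\beta>0$ \emph{and} $\tilde\beta\wedge d\eta\ge 0$; in your normalisation the second condition is exactly $\tilde\beta(\mathcal{C})\ge 0$. Given such a $\tilde\beta$, the perturbation $\beta'=\tilde\beta+\epsilon\beta$ for small $\epsilon>0$ has $\beta'(\mathcal{C})\ge\epsilon>0$ and $d\beta'(\mathcal{C},\zeta_{\eta,\beta})=d\tilde\beta(\mathcal{C},\zeta_{\eta,\beta})+\epsilon\,d\beta(\mathcal{C},\zeta_{\eta,\beta})>0$, after which your packaging into the shape $e^g\beta+\iota_\Xi d\eta$ and the rescaling argument for $\mathcal{C}_{\eta,\beta'},\zeta_{\eta,\beta'}$ go through. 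Either cite the equivalence in this stronger form or supply a genuine argument that $\gamma$ can be chosen with $\gamma(\mathcal{C})\ge 0$; as written the converse is not proved.
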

\begin{proof}
By the proof of \cite[Theorem 1]{Zung}, also see \cite[Proposition 4.5]{Massoni:Taut}, the smooth foliation $\mathcal{F}$ has no invariant measure if and only if there exists some $\tilde{\beta} \in \Omega^1(M)$ which satisfies
\begin{equation}
 \label{eq:contact}
\eta \wedge d\tilde{\beta}>0 \:\:\text{and}\:\: \tilde{\beta}\wedge d \eta \ge 0
\end{equation}
where we orient $M$ via the volume form $\beta \wedge d\eta>0$.

If $d\beta(\mathcal{C}_{\eta,\beta},\zeta_{\eta,\beta})>0$, then the first inequality in \eqref{eq:contact} can be seen to be satisfied for $\tilde{\beta}=\beta$, while the second inequality is even strict.

Conversely, assuming the existence of $\tilde{\beta}$ that satisfies the above Inequalities \eqref{eq:contact}, we get a Liouville-Hamiltonian structure $(M,\eta,\beta'=\tilde{\beta}+\epsilon\beta)$. When is $\epsilon>0$ sufficiently small the inequality $d\beta'(\mathcal{C}_{\eta,\beta},\zeta_{\eta,\beta})>0$ is still satisfied.
\end{proof}
Liouville manifolds whose skeleta admit Liouville-Hamiltonian structures of the type described by Proposition \ref{prp:hypertaut} were investigated by Massoni in \cite{Massoni:Taut}. 
We refer to that article as well as \cite{Hozoori:Skeleton} for the Anosov properties of the corresponding Liouville flow on $M$.

\subsection{Hamiltonian-Liouville structures on hypersurfaces of Liouville domains}

For any Liouville-Hamiltonian structure $(M,\eta,\beta)$, we can consider the one-form $\lambda=\eta+s\beta$ on $M \times \R_s$, where $\eta,\beta \in \Omega^1(M)$ are extended by pull-back under the canonical projection to $M$. One easily verifies the following, where the equivalence in the second statement follows from Lemma \ref{lma:equiv1} combined with Part (2) of Theorem \ref{thm:classification}.
\begin{prp}
\label{prp:liouville}
For any Liouville-Hamiltonian structure $(M,\eta,\beta)$ and $\epsilon>0$ sufficiently small, we get a Liouville manifold $(M \times [-\epsilon,\epsilon]_s,\lambda=\eta+s\beta)$ whose Liouville-vector field $\zeta_\lambda$ is nowhere characteristic on $M$, and which coincides with $\zeta_\lambda|_M=\zeta_{\eta,\beta}$ along $M$.

Moreover, $(M,\eta,\beta)$ is of linear contact-deformation type if and only if $\zeta_\lambda$ is $[ds]$-repelling along $M$.
\end{prp}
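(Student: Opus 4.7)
The plan is as follows. I would begin by verifying that $\lambda=\eta+s\beta$ is a Liouville form on $M\times[-\epsilon,\epsilon]_s$ for sufficiently small $\epsilon>0$, i.e.\ that $d\lambda=d\eta+ds\wedge\beta+s\,d\beta$ is symplectic there. At $s=0$ the top power reduces to $n\,ds\wedge\beta\wedge(d\eta)^{\wedge(n-1)}$, since conditions (1) and (2) of Definition \ref{dfn:LHstructure} force $\ker d\eta$ to be one-dimensional and hence $(d\eta)^{\wedge n}=0$. Condition (3) says precisely that $\beta\wedge(d\eta)^{\wedge(n-1)}$ is a volume form on $M$, so $(d\lambda)^{\wedge n}$ is non-degenerate along $M\times\{0\}$, and therefore on a two-sided collar by continuity.

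Next I would compute the Liouville vector field along $M$. Writing $\zeta_\lambda|_{s=0}=V+f\,\partial_s$ with $V\in\Gamma(TM)$, the equation $\iota_{\zeta_\lambda}d\lambda=\lambda$ at $s=0$ expands to
\[
\iota_V d\eta-\beta(V)\,ds+f\,\beta=\eta.
\]
Matching $ds$-components (absent on the right) gives $\beta(V)=0$, leaving $\iota_V d\eta+f\beta=\eta$. The pair $(V,f)=(\zeta_{\eta,\beta},0)$ solves this by definition of $\zeta_{\eta,\beta}$. For uniqueness, contract the difference equation with $\mathcal{C}\in\ker d\eta$: since $d\eta(\mathcal{C},\cdot)=0$ and $\beta(\mathcal{C})=1$, the contribution of $f$ is forced to vanish; the remaining equation $\iota_W d\eta=0$ with $W\in\ker\beta$ then implies $W\in\ker d\eta\cap\ker\beta=\{0\}$ by condition (3). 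Hence $\zeta_\lambda|_M=\zeta_{\eta,\beta}$.

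That $\zeta_\lambda$ is nowhere characteristic along $M$ is then immediate: the vector $\partial_s$ is a non-vanishing normal to $M$ inside $M\times[-\epsilon,\epsilon]$, and $\lambda(\partial_s)=\eta(\partial_s)+s\beta(\partial_s)=0$ along $M$ because both $\eta$ and $\beta$ are pulled back from $M$. With this canonical choice of normal $\nu=\partial_s$, the associated conormal section is $[\nu]^*=[ds]$.

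For the ``moreover'' statement, no new argument is needed: condition (4) of Lemma \ref{lma:equiv1} is exactly the assertion that the Liouville vector field of $\eta+s\beta$ on this collar is $[ds]$-repelling, while condition (1) is the definition of linear contact-deformation type. The equivalence (1)$\Leftrightarrow$(4) already established there closes the argument. The only mildly delicate step in the whole proof is the uniqueness argument in the second paragraph; the rest amounts to unwinding definitions and invoking the earlier lemma.
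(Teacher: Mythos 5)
Your proof is correct and follows the route the paper intends: the paper offers no written argument beyond "one easily verifies" plus a citation of Lemma \ref{lma:equiv1}, and your computation of $(d\lambda)^{\wedge n}$ along $M\times\{0\}$, the identification $\zeta_\lambda|_M=\zeta_{\eta,\beta}$, and the appeal to the equivalence (1)$\Leftrightarrow$(4) of that lemma are exactly the intended verification. The only remark is that your uniqueness step is automatic — $\iota_{\zeta}d\lambda=\lambda$ has a unique solution once $d\lambda$ is nondegenerate along $M\times\{0\}$ — so the "mildly delicate" paragraph could be omitted, though it is harmless as written.
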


Proposition \ref{prp:beta} above can be seen as a converse to the above proposition. Namely, when the Liouville vector field $\zeta_\lambda \in \Gamma(TX)$ of $(X,\lambda)$ is nowhere characteristic on the hypersurface $M \subset X$, and we take $\nu \in \ker(\lambda_M)$ to be a choice of non-vanishing normal vector field along $M$, then $(M,\eta=\lambda|_{TM},\beta=\beta_\nu)$ is a Liouville-Hamiltonian structure where $\beta_\nu \in \Omega^1(M)$ is the one-form provided by Proposition \ref{prp:beta}.

Moreover, Theorem \ref{thm:classification} implies that there is an embedding $\iota \colon M \times [-\epsilon,\epsilon]_s \hookrightarrow X$, where $M=\{s=0\}$ and $\partial_s|_M=\nu$, and for which $$\iota^*\lambda=\eta+s\beta_\nu+d(s^2G)$$
is satisfied for some smooth function $G$ defined near $M$. In other words, the induced Liouville-Hamiltonian structure on a hypersurface $M \subset (X,\lambda)$ recovers the Liouville form on $X$ up to the exact one-form $d(s^2G)$.

Recall that $\beta_\nu$ has the following dependence on the choice of normal vector-field $\nu$. For any different choice of normal $\nu'=e^g\nu+\Xi$ with $\Xi \in \ker \lambda \cap \Gamma(TM)$, Proposition \ref{prp:beta} implies that
$$\beta_{\nu'}=e^g\beta+\iota_\Xi d\eta,\:\: \Xi \in \ker \eta \subset \Gamma(TM).$$
Note that the two Liouville-Hamiltonian structures $(M,\eta=\lambda|_{TM},\beta_\nu)$ and $(M,\eta=\lambda|_{TM},\beta_{\nu'})$ induce the same Liouville vector-fields along $M$.

If we instead consider a general Liouville-Hamiltonian structure $(M,\eta,\beta')$ with $\beta'=e^g\beta+\iota_\Xi d\eta$ where $\Xi \in \Gamma(TM)$ but not necessarily $\Xi \in \ker \eta$, then the new Liouville vector field on $M$ is equal to $\zeta_\eta+F\mathcal{C}_{\eta,\beta}$, where $F=e^{-g}\eta(\Xi).$ This Liouville-Hamiltonian structure is induced by the Liouville form 
$$\lambda' = \eta+s\beta-d(sF) \in \Omega^1(M \times [-\epsilon,\epsilon]).$$
via the vector-field $\nu'=e^g\partial_s+\Xi \in \ker(\lambda'|_M)$ normal to $M$.

\subsection{Closed examples: McDuff and torus bundle domains}
\label{sec:examples}

Constructing examples of Liouville-Hamiltonian structures on closed manifolds $M^{2n-1}$ is a highly non-trivial task. Here we present the two most well-studied examples, which both are three-dimensional. More three-dimensional examples can be found using the connections with Anosov flows established in \cite{Hozoori} and \cite{Massoni:Anosov}.

\subsubsection{McDuff domains}
\label{sec:McDuff}

The McDuff domain were first constructed by McDuff in \cite{McDuff}. Consider a unit cotangent bundle $ \pi \colon U^*\Sigma \to \Sigma$ of an oriented surface with the contact form $\alpha_g=p\,dq|_{T(U^*\Sigma)}$ induced by a Riemannian metric $g$ on $\Sigma$. Further, let $\Theta_g \in \Omega^1(U^*\Sigma)$ be the connection one-form on the unit \emph{cotangent} bundle induced by the same metric $g$. We have three canonical vector-fields on $T(U^*\Sigma)$:
\begin{itemize}
 \item The angular vector-field $\partial_\theta$ in the fibre induced by the metric;
 \item The Reeb vector-field $R_g$, i.e.~the vector field that generates the cogeodesic flow; and
 \item The unit vector in the intersection $h \in \ker\alpha \cap \ker \Theta_g $ for which $\langle R_g,h,\partial_\theta\rangle$ is a basis positively oriented by $\alpha_g \wedge d\alpha_g$, which means that $\langle h, R_g,\partial_\theta \rangle$ is a positively oriented basis for the orientation of $T^*\Sigma$ induced by the locally defined oriented basis $\langle D\pi(h), D\pi(R_g) \rangle$ of $T\Sigma$.
\end{itemize}
Standard computations give us
\begin{itemize}
\item $\alpha_g(\partial_\theta)=0=\Theta_g(R_g)$;
\item $\alpha_g(R_g)=1=\Theta_g(\partial_\theta)$;
\item $d\alpha_g(R_g,\cdot)=0=d\Theta_g(\partial_\theta,\cdot)$; and
\item $d\alpha_g(\partial_\theta,h)=1$;
\end{itemize}
Under the additional assumption that $d\Theta_g( h,R_g )=-1$, i.e.~$g$ is a hyperbolic metric of constant scalar curvature, it follows that $d(\alpha_g-\Theta_g)(h,\cdot)=\alpha_g-\Theta_g$. In other words, the triple $(U^*\Sigma, \alpha_g - \Theta_g, \alpha_g)$ is a Liouville-Hamiltonian structure with Liouville vector field given by $ \zeta =h$, which generates the so-called horocycle flow. The characteristic vector field is given by $\mathcal{C}=R_g+\partial_\theta$ (this is a cogeodesic flow with a magnetic term) and hence $d\beta(\mathcal{C},\zeta)=d\alpha_g(R_g+\partial_\theta,h)\equiv 1$. This means that the Liouville-Hamiltonian structure is of the linear contact-deformation type in the strong sense as described in Proposition \ref{prp:hypertaut}. 

Since $U^*\Sigma \to \Sigma$ is a trivial $S^1$-bundle when $\Sigma$ is open, in the case of constant curvature $-1$ we have $d\Theta_g=-\pi^*\sigma=-d\pi^*\gamma$ where $\sigma \in \Omega^2(\Sigma)$ is the area form on $\Sigma$ and $\gamma \in \Omega^1(\Sigma)$ is a choice of primitive (which exists when $\Sigma$ is open). Thus, there is an embedding
\begin{gather*}
(U^*\Sigma,\alpha_g-\Theta_g) \hookrightarrow (T^*\Sigma,p\,dq),\\
x \mapsto x+\Gamma_{ \gamma},
\end{gather*}
induced by fibre-wise addition of the section $\Gamma_{ \gamma} \subset T^*\Sigma$ of $ \gamma \in \Omega^1(\Sigma)$ that preserves the exterior differentials of the one-forms.

We refer to \cite{Cieliebak:Anasov} as well as Subsection \ref{sec:McDuff2} below for more details concerning the symplectic topology of the McDuff domain itself.

\subsubsection{Torus bundle domains}
\label{sec:torus}
Liouville structures on torus bundle domains were first constructed by Mitsumatsu in \cite{Mitsumatsu}. Consider a matrix $A \in \OP{SL}_2(\Z)$ which is hyperbolic, i.e.~with one eigenvalue $e^\nu$ with eigenvector $\mathbf{v}$ and one eigenvalue $e^{-\nu}$ with eigenvector $\mathbf{w}$, where $\nu \neq 0$. The matrix acts on $(T^*\T^2=\T^2_\theta \times \R^2_p, \mathbf{p}\,d\boldsymbol{\theta} )$ by the exact symplectomorphism $
\Phi(\boldsymbol{\theta},\mathbf{p}) \coloneqq ( (A^{tr})^{-1} (\boldsymbol{\theta}), A (\mathbf{p}))$
and induces an action on the subset
$$ \tilde{V} \coloneqq { \T^2} \times (\R_{> 0}\mathbf{v}+\R\mathbf{w}) \subset T^*\T^2.$$
The properly embedded hypersurface
$$\tilde{M} \coloneqq \T^2 \times (\R_{>0}\mathbf{v}+ 0\mathbf{w} ) \subset \tilde{V} \subset T^*\T^2$$
is tangent to the Liouville flow and fixed set-wise by the symplectomorphism $\Phi$. The quotient of $\tilde{V}$ by the group $\Z=\langle\Phi\rangle$ produces a complete Liouville manifold $M \times \R$ whose skeleton is given by the quotient $M$ of $\tilde{M}$, where $M$ thus is a $\T^2$-bundle over $S^1$ twisted by $A$.

 The Liouville structure on $T^*\T^2$ induces a Liouville-Hamiltonian structure
$$(\tilde{M},\eta,\beta)=(\T^2 \times (\R_{>0})_t,t\mathbf{v}d\boldsymbol{\theta},t^{-1}\mathbf{w}d\boldsymbol{\theta}),$$
where $\Phi$ takes the form $\Phi(\boldsymbol{\theta},t)=((A^{tr})^{-1}(\boldsymbol{\theta}),e^\nu t).$
The Liouville vector field is given by $\zeta=t\partial_t$, while the characteristic vector-field is $\mathcal{C}=t\mathbf{w}^*\partial_{\boldsymbol{\theta}}$ where $\mathbf{w}^*\bullet \mathbf{w}=1$ and $\mathbf{w}^*\bullet \mathbf{v}=0$ and thus $(A^{ tr})^{-1}(\mathbf{w}^*)=e^\nu\mathbf{w}^*$; additionally $(A^{ tr})^{-1}(\mathbf{v}^*)=e^{-\nu}\mathbf{v}^*$ is satisfied, where $\mathbf{v}^*\bullet \mathbf{w}=0$ and $\mathbf{v}^* \bullet \mathbf{v}=1$. Note that both $\eta$ and $\beta$ thus are invariant under $\Phi$.

Also in this case the Liouville-Hamiltonian structure is of linear contact-deformation type of the stronger form as described by Proposition \ref{prp:hypertaut}, since
$$d(t^{-1}\mathbf{w}d\boldsymbol{\theta})(\mathcal{C},\zeta)=1$$
is satisfied everywhere.

\section{Classifying weakly exact Lagrangians (Theorem \ref{thm:a})}

 The result that we want to establish is a topological classification of weakly exact Lagrangians inside the McDuff and torus bundle domains. The strategy of the proof is to pass to certain infinite covering spaces of the Liouville domain $(\tilde{V},\lambda) \to (V,\lambda)$ whose symplectic structures are better understood than the original space. The reason for why infinite covers are simpler is that the universal cover of $V$ admits an embedding into the symplectic vector space. The most important step of this strategy is produce a covering under which some given weakly exact Lagrangian can be lifted. Once this has been done, we can use the following elementary result which states that lifts of weakly exact Lagrangians under covering spaces still are weakly exact.
\begin{lma}
\label{lma:weaklyexact}
Let $p \colon (\tilde{X},\tilde\omega) \to (X,\omega)$ be a symplectic covering, and $\iota \colon L \hookrightarrow (X,\omega)$ be a weakly exact Lagrangian. If $\iota$ admits a lift $\tilde\iota \colon L \hookrightarrow (\tilde{X},\tilde\omega)$, i.e. $p\circ\tilde{\iota}=\iota$, then $\tilde\iota$ is a weakly exact Lagrangian embedding as well.
\end{lma}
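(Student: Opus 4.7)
The strategy is direct: any disc in $\tilde{X}$ bounded by $\tilde\iota(L)$ descends via $p$ to a disc in $X$ bounded by $\iota(L)$, and since $p^*\omega=\tilde\omega$ symplectic areas are preserved. Thus weak exactness of $\iota$ transfers to $\tilde\iota$ essentially by naturality of pullback.

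First I would verify that $\tilde\iota$ is a Lagrangian embedding. Since $p$ is a local diffeomorphism and $\iota = p\circ\tilde\iota$ is a smooth embedding, $\tilde\iota$ is a smooth immersion, and injectivity of $\iota$ forces injectivity of $\tilde\iota$. The Lagrangian condition follows from
$$\tilde\iota^*\tilde\omega=\tilde\iota^*p^*\omega=(p\circ\tilde\iota)^*\omega=\iota^*\omega=0.$$
In the Liouville setting, where $\tilde\lambda=p^*\lambda$, the same computation gives $\tilde\iota^*\tilde\lambda=\iota^*\lambda$, which is therefore a closed one-form (and vanishes near $\partial L$, when applicable) exactly as for $\iota$.

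The main step is the vanishing of the symplectic area on $\pi_2(\tilde{X},\tilde\iota(L))$. For any class represented by a map $f\colon (D^2,\partial D^2)\to (\tilde{X},\tilde\iota(L))$, post-composition with $p$ yields a map $p\circ f\colon (D^2,\partial D^2)\to (X,\iota(L))$ because $p(\tilde\iota(L))=\iota(L)$, so $[p\circ f]\in\pi_2(X,\iota(L))$. Naturality of pullback then gives
$$\int_{D^2} f^*\tilde\omega=\int_{D^2} f^*p^*\omega=\int_{D^2}(p\circ f)^*\omega=0,$$
where the final equality is the weak exactness of $\iota$.

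There is no real obstacle in this argument; the only mild subtlety is the bookkeeping between $\pi_2(\tilde X,\tilde\iota(L))$ and $\pi_2(X,\iota(L))$, which is handled by the fact that covering maps descend relative discs cleanly. The entire proof is essentially an instance of the naturality of symplectic area under the covering map.
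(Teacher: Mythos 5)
Your argument is correct and is exactly the paper's proof: the paper's one-line justification is that any class in $\pi_2(\tilde{X},\tilde\iota(L))$ has the same symplectic area as its image in $\pi_2(X,\iota(L))$ under $p$, which is precisely your main computation. The additional verifications (that $\tilde\iota$ is a Lagrangian embedding and that the one-form data pulls back correctly) are straightforward and consistent with what the paper leaves implicit.
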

\begin{proof}
Any element in $\pi_2(\tilde{X},\tilde\iota(L))$ has the same symplectic area as the corresponding image in $\pi_2(X,\iota(L)).$
\end{proof}
A crucial non-existence result for weakly exact Lagrangians that we rely on is the following.
\begin{thm}[Lalonde--Sikorav \cite{LalondeSikorav:SousVarietees}]
\label{thm:lalondesikorav}
There are no closed weakly exact Lagrangian submanifolds in $(T^*\Sigma,p\,dq)$ when $\Sigma$ is an open manifold. 
\end{thm}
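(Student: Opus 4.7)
The strategy combines two ingredients: (i) the openness of $\Sigma$ yields a compactly supported Hamiltonian isotopy of $T^*\Sigma$ that displaces the closed Lagrangian $L$; and (ii) Floer-theoretic rigidity forbids a closed weakly exact Lagrangian from being Hamiltonian displaceable. These two conclusions are incompatible, so no such $L$ can exist.

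For the displacement I would choose a complete vector field $X$ on $\Sigma$ whose flow eventually moves the compact projection $\pi(L) \subset \Sigma$ off itself; such an $X$ exists whenever $\Sigma$ is open, for instance by pushing towards a non-compact end. The cotangent lift is the fibrewise linear Hamiltonian $H(q,p) = p(X(q))$, whose Hamiltonian flow is the natural lift of the flow of $X$ to $T^*\Sigma$ and preserves the Liouville form $p\,dq$. Since $L$ is compact, one may cut off $H$ outside a sufficiently large compact subset of $T^*\Sigma$ without altering the flow on a neighbourhood of the family $\{\phi^t_H(L)\}_{0 \le t \le T}$, producing a compactly supported Hamiltonian whose time-$T$ flow satisfies $\phi^T_H(L) \cap L = \emptyset$ for $T$ sufficiently large.

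For the rigidity I invoke standard Lagrangian Floer theory. The weak exactness hypothesis, i.e.~the vanishing of $\omega$ on $\pi_2(T^*\Sigma,L)$, rules out both disc and sphere bubbling, since any non-constant $J$-holomorphic disc or sphere with boundary on $L$ has strictly positive symplectic area. Consequently the self-Floer cohomology $HF^*(L,L)$ is a well-defined invariant; by identifying it with the Morse cohomology of $L$ through a $C^2$-small Hamiltonian perturbation of $L$, one obtains $HF^*(L,L) \cong H^*(L) \ne 0$. On the other hand, the standard continuation argument due to Chekanov and Polterovich yields $HF^*(L,L) = 0$ whenever $L$ is Hamiltonian displaceable; combined with the displacement constructed above, this contradicts the previous non-vanishing.

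The main obstacle is purely technical: one must set up the Lagrangian Floer theory rigorously in the non-compact setting of the open Liouville manifold $T^*\Sigma$, relying on the Liouville form to control holomorphic curves near infinity. Once this is in place, the argument in fact yields the more general statement that no closed weakly exact Lagrangian in any symplectic manifold can be Hamiltonian displaceable, with the openness of $\Sigma$ only being needed to manufacture the displacing isotopy.
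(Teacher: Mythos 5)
Your overall architecture (displace $L$ by a Hamiltonian isotopy, then contradict the non-vanishing of the weakly exact Floer cohomology $HF^*(L,L)\cong H^*(L)\neq 0$) is exactly the mechanism used in the paper, and the rigidity half of your argument is fine modulo the convexity-at-infinity issues you flag. The gap is in the displacement step. You claim that, because $\Sigma$ is open, there is a complete vector field on $\Sigma$ whose flow moves the compact set $\pi(L)$ off itself, ``for instance by pushing towards a non-compact end.'' This is false in general: an isotopy of $\Sigma$ preserves homology classes, so if $\pi(L)$ contains two loops $a,b$ with algebraic intersection number $[a]\cdot[b]\neq 0$ (e.g.\ take $\Sigma$ a once-punctured torus and $\pi(L)$ containing a wedge of the two core circles), then for any isotopy $\phi_t$ one has $\phi_1(a)\cap b\neq\emptyset$, hence $\phi_1(\pi(L))\cap\pi(L)\neq\emptyset$. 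Since you have no a priori control over the projection of an arbitrary closed Lagrangian, the base-displacement strategy cannot be carried out as stated, and the cotangent lift of such a flow need not displace $L$.

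The paper displaces $L$ by a different, fibrewise mechanism that avoids this obstruction entirely: openness of $\Sigma$ guarantees a smooth function $f\colon\Sigma\to\R$ with no critical points (constructed cobordism by cobordism), and the Hamiltonian flow of $f\circ\pi$ is fibrewise addition of the sections $t\,df\in\Gamma(T^*\Sigma)$. Since $df$ is nowhere zero and $\pi(L)$ is compact, $\min_{\pi(L)}|df|>0$, so for $t\gg 0$ the translate $L+t\,df$ is disjoint from $L$ for elementary reasons (the fibre components of $L$ over a compact base are uniformly bounded). If you replace your lifted base flow by this fibrewise translation, the rest of your argument goes through and agrees with the paper's proof.
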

\begin{proof}
 We argue that the Floer homology groups for weakly exact Lagrangians in $T^*\Sigma$ are well defined, invariant under Hamiltonian isotopies, and non-zero. We refer to e.g.~\cite{FOOO:I} for the definition of Floer homology, which is well-defined when the Lagrangians are weakly exact and the symplectic manifold is well-behaved at infinity. 
 
 In order to ensure that the relevant pseudoholomorphic curves remain confined to a priori given compact subsets of $T^*\Sigma$, which is needed to have a Floer complex that is well-defined and invariant, it suffices to produce a tame almost complex structure for which $T^*\Sigma$ is convex at infinity in the sense of Gromov \cite{Gromov:Pseudo}. Indeed, one can readily see that $T^*\Sigma$ can be exhausted by a family $W_1 \subset W_2 \subset W_3 \ldots \subset T^*\Sigma$ of subcritical Weinstein subdomains; any tame almost complex $J$ structure for which all boundaries $\partial W_i$ are $J$-convex is then of the required type.
 
 For an open manifold $\Sigma$ one can find a smooth function $f$ without critical points. One way to construct this function is as follows. Let $C$ be a compact cobordism whose boundary $\partial C =B_1 \sqcup B_2$ admits a decomposition into connected components where $B_1 \neq \emptyset$. Any smooth function defined near $B_2$ (which is allowed to be the empty set) that has no critical points, can be extended to a smooth function $f\colon C \to \R$ without critical points. To see this, one can start by extending the function to $\tilde{f} \colon C \to \R$ with only a finite number of Morse critical points that all are contained in $C \setminus \partial C$. We can then use an appropriate diffeomorphism $\iota \colon C \hookrightarrow C$ that is the identity near $B_2$, and which maps $C$ into the complement of the critical points, and use $f\coloneqq \tilde{f}\circ \iota$ as our sought function without critical points. (The diffeomorphism $\iota$ can be constructed by e.g.~using a smooth arc that connects a point in the boundary $B_1$ with all the critical points of $\tilde{f}$, and then performing an appropriate isotopy supported in a neighbourhood of this arc.) Since $\Sigma$ admits a proper Morse function, we can decompose it into a possibly infinite number of compact cobordisms, and argue by induction to construct the sought function $f$ without critical points on all of $\Sigma$.
 
 One we have managed to construct the function $f$ without critical points, fibre-wise addition with the family of sections $t\cdot df \in \Gamma(T^*M)$ induce a Hamiltonian isotopy which, due to the non-vanishing of $df$, displaces any compact subset from itself when $t \gg 0$. This contradicts the fact that Floer homology is well-defined, non-zero, and invariant, for weakly exact closed Lagrangians inside symplectic manifolds that are convex at infinity. 
\end{proof}

The classification problem of Lagrangian submanifolds up to Hamiltonian isotopy is in general wide open. However, in a few number of certain well-behaved four-dimensional symplectic manifolds, there are classification results for weakly exact Lagrangians. The following result by the author will be needed, which is a slightly strengthening of the classification result from \cite{Dimitroglou:Isotopy} by the author joint with Goodman and Ivrii.
\begin{thm}[Theorem B \cite{Dimitroglou:Whitney}]
\label{thm:HamiltonianIsotopy}
Any weakly exact Lagrangian
$$L \subset \left(T^*\T^2=\T^2_{\boldsymbol{\theta}} \times \R^2_{\mathbf{p}},\sum_i dp_i \wedge d\theta_i\right)$$
is Hamiltonian isotopic to a torus $\T^2 \times \{(p_0,p_1)\}$. Under the additional assumption that $L \subset \T^2 \times \Omega$ for some convex subset $\Omega \subset \R^2,$ the Hamiltonian isotopy can, moreover, be assumed to be confined to $\T^2 \times \Omega$.
\end{thm}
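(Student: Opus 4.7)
My plan builds on the classification from \cite{Dimitroglou:Isotopy}, which already yields the first claim: every weakly exact Lagrangian in $T^*\T^2$ is Hamiltonian isotopic to some standard torus $\T^2 \times \{\mathbf{p}_*\}$. The standard torus is identified by the Hamiltonian invariant $[\lambda|_L] \in H^1(L;\R)\cong\R^2$, which equals $\mathbf{p}_*$ under the $\pi_1$-isomorphism $L \to \T^2$ induced by the projection. The novelty of the theorem lies in the confinement statement, so that is where I would focus the bulk of the work.

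Assume $L \subset \T^2 \times \Omega$ with $\Omega \subset \R^2$ convex. The first step is to verify that $\mathbf{p}_* \in \overline\Omega$. For graph Lagrangians $L = \{(\theta, \gamma(\theta))\}$ this is immediate, since $\mathbf{p}_* = \int_{\T^2}\gamma$ lies in the convex hull of the image $\gamma(\T^2) \subset \Omega$. In the general case I would argue by contradiction: if $\mathbf{p}_* \notin \overline\Omega$, then a separating hyperplane provides a linear function $\ell$ on $\R^2$ with $\ell(\mathbf{p}_*) > \sup_\Omega \ell$, whose pull-back to a Hamiltonian on $T^*\T^2$ generates a flow that eventually displaces $L$ from every standard torus on the far side of the hyperplane. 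This contradicts the non-vanishing of Lagrangian Floer cohomology $HF(L, \T^2 \times \{\mathbf{p}_*+c\cdot d\ell\})$ coming from the isotopy to $\T^2 \times \{\mathbf{p}_*\}$.

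To upgrade to a confined Hamiltonian isotopy, I would repeat the Floer-theoretic classification argument inside the Liouville subdomain $\T^2 \times \Omega' \subset \T^2 \times \Omega$ for some convex $\Omega' \Subset \Omega$ containing $\mathbf{p}_*$ and $L$. Choose an almost complex structure making $\T^2 \times \partial\Omega'$ a $J$-convex hypersurface; this is possible precisely because $\Omega'$ is convex in $\R^2$. By the maximum principle, all pseudoholomorphic curves with boundary on $L$ are then confined to $\T^2 \times \Omega'$, and the Floer-theoretic construction underlying \cite{Dimitroglou:Isotopy} can be carried out with all auxiliary data compactly supported inside $\T^2 \times \Omega$, producing the required confined isotopy.

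The hardest part is this last step. The original classification involves Hamiltonians, Lagrangian isotopies, and moduli spaces defined globally on $T^*\T^2$, and one must verify that cut-offs, bump functions, and choices of almost complex structures can all be localised to $\T^2 \times \Omega$ without destroying the regularity, compactness, and continuation-invariance arguments that drive the proof. This requires careful bookkeeping in a slightly shrunk buffer region $\Omega' \Subset \Omega$, but no essentially new ideas beyond the interplay between convexity of $\Omega$, $J$-convexity of $\T^2 \times \partial\Omega'$, and the Floer continuation methods available from the unrestricted classification.
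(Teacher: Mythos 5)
First, note that the paper does not prove this statement at all: it is imported verbatim as Theorem~B of \cite{Dimitroglou:Whitney} and described as ``a slight strengthening'' of the classification in \cite{Dimitroglou:Isotopy}. So there is no in-paper argument to compare against, and your proposal has to stand on its own. As such, it has one local error and one genuine gap. The local error is the displacement mechanism for showing $\mathbf{p}_*\in\overline\Omega$: the Hamiltonian vector field of the pull-back of a linear function $\ell(\mathbf{p})$ to $T^*\T^2$ is $\sum_i(\partial\ell/\partial p_i)\,\partial_{\theta_i}$ (up to sign), i.e.\ a rotation of the base torus. It preserves every fibre torus $\T^2\times\{\mathbf{q}\}$ and displaces nothing in the $\mathbf{p}$-direction; genuine fibre translations have non-zero flux and are not Hamiltonian. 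The conclusion you want is nevertheless true and follows more simply: if $\mathbf{p}_*\notin\overline\Omega$ then $L$ and $\T^2\times\{\mathbf{p}_*\}$ are disjoint, so $HF(L,\T^2\times\{\mathbf{p}_*\})=0$, contradicting $HF(L,\T^2\times\{\mathbf{p}_*\})\cong H^*(\T^2)\neq 0$, which holds because $L$ is Hamiltonian isotopic to $\T^2\times\{\mathbf{p}_*\}$ by the unconfined classification. (Also, for the contact-type/$J$-convexity claim you should use the Liouville form $(\mathbf{p}-\mathbf{p}_0)\,d\boldsymbol{\theta}$ for an interior point $\mathbf{p}_0\in\Omega'$; convexity of $\Omega'$ only gives star-shapedness of $\partial\Omega'$ about interior points, not about the origin.)

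The genuine gap is the final step, which is where the entire content of the theorem lies. The classification of \cite{Dimitroglou:Isotopy} is not produced by Floer continuation maps that one can localise by a maximum principle; the Hamiltonian isotopy there is constructed from auxiliary geometric data (pseudoholomorphic foliations obtained by stretching the neck along $L$, surgeries, and an inductive standardisation procedure), and there is no formal reason why the resulting isotopy stays inside $\T^2\times\Omega$ just because holomorphic curves with boundary on $L$ do. Ruling out, for instance, a torus that is ``knotted'' inside a thin convex neighbourhood of a segment and needs more room to be unknotted is exactly the point, and your proposal addresses it only with the assertion that the construction ``can be carried out with all auxiliary data compactly supported inside $\T^2\times\Omega$ \ldots\ but no essentially new ideas.'' That assertion is the theorem. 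Your overall strategy (convexity $\Rightarrow$ contact-type boundary $\Rightarrow$ confinement of the relevant curves $\Rightarrow$ rerun the classification inside the subdomain) is the right shape and is in the spirit of how such confined classifications are actually proved, but as written the proposal defers the whole difficulty to a claimed routine localisation, so it does not constitute a proof.
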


\subsection{General restrictions on Lagrangian embeddings}

The Lagrangian adjunction-formula implies that the self-intersection number $L \bullet L \in \Z$ for any orientable Lagrangian surface $L^2 \subset (X^4,\omega)$ satisfies $L \bullet L=-\chi(L)$; see e.g.~\cite{Audin}. In the non-orientable case, the same formula is true modulo two. When $X$ is compact with possibly empty boundary, Poincar\'{e} duality $PD \colon H_*(X) \xrightarrow{\cong} H^{ 4-*}(X,\partial X)$ implies that the square of the cup-product
\begin{gather*} S \colon H^2(X,\partial X) \to H^4(X,\partial X),\\
\alpha \mapsto \alpha \smile \alpha
\end{gather*}
satisfies $S(PD[L])=-\chi(L)\cdot PD([\pt])$. The operation has a lift to the so-called Pontryagin square
$$ P_2 \colon H^2(X,\partial X;\Z_2) \to H^4(X,\partial X;\Z_4),$$
that simply is equal to the modulo four reduction of $S$ on the image of the reduction $H^2(X,\partial X;\Z) \to H^2(X,\partial X;\Z_2)$. Audin shown in \cite[Proof of Proposition 1.3.1]{Audin} that
$$P_2(PD[L])=-\chi(L)\cdot PD([\pt]) \in \Z_4\cdot PD([\pt])$$
is satisfied for any Lagrangian embedding, i.e.~the Pontryagin square detects the Euler characteristic of $L$ modulo four.

Recall that the Euler characteristic of an non-orientable surface is equal to 
$$\chi(\underbrace{\RP^2 \sharp \ldots \sharp \RP^2}_k)=2-k.$$
From this it is easy to derive the following.
\begin{lma}
 All closed Lagrangians in $V$ are either tori, Klein bottles, or more general connected sums of $2k$ number of $\RP^2$'s for $k\ge 1$
\end{lma}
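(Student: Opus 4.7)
The strategy is to combine Audin's Pontryagin-square formula recalled just above with a vanishing of the self-cup-square on $H^2(V,\partial V;R)$ for $R\in\{\Z,\Z_2\}$. The crucial topological input is that a trivial bi-filling $V\cong M\times[-1,1]$ is homotopy equivalent to the closed orientable $3$-manifold $M$, and that $\partial V=M\sqcup M$ is a disjoint union of two copies of $M$, each of whose inclusion is a homotopy equivalence. It follows that for any coefficient ring $R$, the restriction map
\[ j^*\colon H^2(V;R)\to H^2(\partial V;R)=H^2(M;R)^{\oplus 2}\]
is the diagonal $\alpha\mapsto(\alpha,\alpha)$, hence injective; so by exactness in the long exact sequence of the pair, the natural map $r\colon H^2(V,\partial V;R)\to H^2(V;R)$ vanishes identically. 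Since the cup product
\[ H^2(V,\partial V;R)\otimes H^2(V,\partial V;R)\to H^4(V,\partial V;R)\]
factors through $r$ on either tensor factor, the self-cup-square on $H^2(V,\partial V;R)$ is zero.

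When $L$ is orientable, $PD[L]\in H^2(V,\partial V;\Z)$ is a well-defined Thom class supported in the interior, and the integer self-intersection equals $L\bullet L=\langle PD[L]\smile PD[L],[V,\partial V]\rangle=0$. Combined with the Lagrangian adjunction $L\bullet L=-\chi(L)$, this forces $\chi(L)=0$, so $L$ is a torus. When $L$ is non-orientable, I would apply Audin's formula $P_2(PD[L])=-\chi(L)\cdot PD([\pt])\in\Z_4\cdot PD([\pt])$, together with the standard fact that $P_2$ reduces modulo $2$ to the ordinary cup square. Since the latter vanishes on $H^2(V,\partial V;\Z_2)$ by the previous paragraph, the class $P_2(PD[L])$ lies in the image of the coefficient-extension map $H^4(V,\partial V;\Z_2)\to H^4(V,\partial V;\Z_4)$, that is, inside $\{0,2\}\subset\Z_4$. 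Hence $\chi(L)$ is even, and the recalled identity then forces the number of $\RP^2$-summands to be even, giving $L$ as a connected sum of $2k$ copies of $\RP^2$ for some $k\ge 1$ (the Klein bottle when $k=1$).

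I do not foresee a genuine obstacle: the entire argument reduces to the simple identification $r=0$, which comes from the cylindrical topology of the bi-filling, and the rest is the routine interaction between the Pontryagin square and the mod-$2$ cup product. The only place where the bi-filling hypothesis is used in an essential way is in showing that both boundary components of $V$ retract to the whole manifold, so that the diagonal injectivity of $j^*$ is automatic.
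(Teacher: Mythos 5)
Your proposal is correct and follows essentially the same route as the paper: combine the Lagrangian adjunction formula (and its mod-$2$/Pontryagin-square refinement due to Audin) with the vanishing of the self-cup-square on $H^2(V,\partial V)$, which forces $\chi(L)=0$ in the orientable case and $\chi(L)\equiv 0 \pmod 2$ in the non-orientable case. The only (cosmetic) difference is in how that vanishing is justified: the paper observes directly that $(V,\partial V)=M\times(I,\partial I)$ is a stabilisation of a $3$-manifold, so the square lands in $H^3(M)\otimes H^2(I,\partial I)=0$, whereas you deduce $H^2(V,\partial V;R)\to H^2(V;R)$ is zero from the long exact sequence of the pair and then factor the cup square through this map --- both arguments exploit the same cylindrical topology and are equally valid.
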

\begin{proof}
Since $(V,\partial)=M \times (I,\partial I)$ is a stabilisation of a three-dimensional manifold, the square of the cup-product $H^2(V,\partial V) \to H^4(V,\partial V)$ vanishes for all choices of coefficients. It follows that the Euler characteristic of any oriented closed Lagrangian must vanish, while it must vanish modulo two in the non-orientable case.
\end{proof}
 Below we will exclude a Lagrangian embedding of the Klein bottle into $V$. The main mechanism for excluding such an embedding is a result due to Shevchishin \cite{LagKlein}, who showed that a Klein bottle in a uniruled symplectic manifold cannot be null-homologous modulo two. The important consequence of this result that we will need is:
\begin{prp}
\label{prp:klein}
There exists no Lagrangian embedding of a Klein bottle in $(T^*\Sigma,p\,dq)$ for any cotangent bundle of a (possibly open) surface $\Sigma$, unless it is a Klein bottle itself, i.e.~$\Sigma=\RP^2 \sharp \RP^2$.
\end{prp}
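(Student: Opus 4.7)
The plan is to reduce, via a suitable cover of $\Sigma$, to the situation of a Lagrangian Klein bottle in a closed uniruled symplectic $4$-manifold $W$ where it is null-homologous modulo $2$, and then to derive a contradiction from Shevchishin's theorem.

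Suppose for contradiction that $\iota\colon K \hookrightarrow (T^*\Sigma, p\,dq)$ is such a Lagrangian embedding with $\Sigma \neq \RP^2 \sharp \RP^2$. First I would pass to the cover $\tilde\Sigma \to \Sigma$ associated to the subgroup $H \coloneqq \iota_*\pi_1(K) \subset \pi_1(\Sigma)$, which lifts $\iota$ to a Lagrangian embedding $\tilde\iota\colon K \hookrightarrow T^*\tilde\Sigma$ with $\tilde\iota_*\pi_1(K) = \pi_1(\tilde\Sigma)$. Since $H$ is a quotient of the Klein bottle group (whose abelianisation $\Z \oplus \Z/2$ has rank one and $\Z/2$ torsion) a short classification of the possible surface-group quotients shows: if $\tilde\Sigma$ is closed then $\tilde\Sigma \in \{S^2, \RP^2, K\}$, but the case $\tilde\Sigma = K$ would force $\Sigma \in \{K, T^2\}$ by Euler characteristics, both excluded --- the first by hypothesis and the second because the non-abelian $\pi_1(K)$ cannot sit inside $\Z^2$; whereas if $\tilde\Sigma$ is open, then $H$ is free (as $\pi_1$ of an open surface) and hence one of $\{1, \Z\}$, yielding $\tilde\Sigma \in \{\R^2, S^1 \times \R, \text{open M\"obius band}\}$.

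Next I would embed $T^*\tilde\Sigma$ --- or, in the cases that require it, at least a sub-domain containing $\tilde\iota(K)$ after an application of the negative Liouville flow (which preserves the Lagrangian property, being conformally symplectic along the cotangent fibres) --- symplectically into a closed uniruled $4$-manifold $W$. Concretely: for $\tilde\Sigma = S^2$ use the symplectic identification $T^*S^2 \cong (S^2 \times S^2) \setminus \Delta$ with $W = S^2 \times S^2$; for $\tilde\Sigma = \R^2$ or $S^1 \times \R$, view $\tilde\Sigma$ as an open planar region and use $T^*\tilde\Sigma \subset T^*\R^2 = \R^4 \subset \CP^2 = W$; and for $\tilde\Sigma = \RP^2$ or the M\"obius band (which sits inside $\RP^2$ as a tubular neighbourhood of $\RP^1$), use the Weinstein neighbourhood of the real part $\RP^2 \subset \CP^2$ and $W = \CP^2$, Liouville-compressing $\tilde\iota(K)$ into this neighbourhood as needed.

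It remains to show $[K] = 0 \in H_2(W;\Z/2)$. This is immediate in the open sub-cases and in the M\"obius-band case, because already $H_2(T^*\tilde\Sigma;\Z/2) = 0$. In the two closed sub-cases $\tilde\Sigma \in \{S^2, \RP^2\}$, the non-trivial generator of $H_2(T^*\tilde\Sigma;\Z/2) = \Z/2$ pushes forward to the class $[\bar\Delta] \in H_2(S^2 \times S^2;\Z/2)$ or $[\RP^2] \in H_2(\CP^2;\Z/2)$ respectively, each of which has non-vanishing Pontryagin square --- $\equiv 2 \pmod 4$ in the first case, $\equiv 1 \pmod 4$ in the second; but Audin's formula requires $P_2(PD[K]) = -\chi(K) \cdot PD[\pt] = 0$ in $H^4(W;\Z/4)$, so these non-trivial generators cannot be the class of $K$. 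Hence $[K] = 0$ in $W$, and Shevchishin's theorem produces the desired contradiction. The principal technical obstacle is this last step in the closed sub-cases $\tilde\Sigma \in \{S^2, \RP^2\}$, where $H_2(T^*\tilde\Sigma; \Z/2)$ is non-trivial and the Pontryagin-square refinement of Audin's formula is essential to force $[K] = 0$ before Shevchishin's result can be applied.
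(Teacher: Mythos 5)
Your proof is correct and follows essentially the same route as the paper's: reduce to a short list of surfaces, conformally embed a compact piece of the cotangent bundle into a closed uniruled manifold ($\CP^2$ or $S^2\times S^2$), and combine Shevchishin's theorem with the Pontryagin-square form of Audin's formula in the cases where $H_2$ is non-trivial. The only organisational difference is that you pass immediately to the cover determined by $\iota_*\pi_1(K)$ and classify it group-theoretically, whereas the paper cases on the topology of $\Sigma$ directly (handling $\T^2$ via a Lagrangian embedding $\T^2\hookrightarrow\C^2$ and invoking covers only when $\operatorname{rank}H_1(\Sigma)\ge 2$); both reductions land in the same endgame.
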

\begin{proof}
In the case when either $\Sigma$ is an open surface, $\Sigma=\T^2$, or $\Sigma=\RP^2$, then any compact subset of $T^*\Sigma$ admits a conformal symplectic embedding into $ (\CP^2,\omega_{FS})$. To see this we can use the fact that any compact subset of $\Sigma$ admits a Lagrangian embedding into $\C^2$ when $\Sigma$ is open or equal to $\T^2$. Weinstein's Lagrangian neighbourhood theorem can then be used to embed any compact subset of $T^*\Sigma$ into $\C^2$. Note that rescaling of $\C^2$ is a conformal symplectomorphism. Since $\Sigma=\RP^2$ admits a Lagrangian embedding into the standard symplectic projective plane $(\CP^2,\omega_{FS})$, any compact subset of $T^*\RP^2$ thus admits a conformal symplectic embedding into $(\CP^2,\omega_{\OP{FS}})$ as well. Since neither $\C^2$ nor $\CP^2$ admit any Lagrangian embeddings of the Klein bottle by \cite{LagKlein}, the same is also true for $T^*\Sigma$ in these cases.

In the case when $\Sigma$ is closed and satisfies $\rank H_1(\Sigma) \ge 2$, we claim that any embedding of a Klein bottle in $T^*\Sigma$ lifts to some appropriate infinite covering space $T^*\tilde{\Sigma} \to T^*\Sigma$; hence, such an embedding can never be Lagrangian by case treated above. Indeed, any continuous map $f \colon \RP^2 \sharp \RP^2 \to \Sigma$ induces a map $f_* \colon H_1(\RP^2 \sharp \RP^2)=\Z_2 \times \Z \to H_1(\Sigma)$ with $\rank(H_1(\Sigma)/\im f_*) \ge 1$. This means that the image of $f_* \colon \pi_1(\RP^2 \sharp \RP^2) \to \pi_1(\Sigma)$ is a subgroup of infinite index. From this it follows that $f$ lifts to some suitable infinite covering space $\tilde{\Sigma}\to\Sigma$.

What remains is to exclude a Lagrangian embedding of a Klein bottle in $T^*\Sigma$ when $\Sigma=S^2$. In that case, any compact subset of $T^*S^2$ admits a symplectic embedding in $(\CP^1 \times \CP^1,\omega_{FS} \oplus \omega_{FS})$, such that $[0_{S^2}]$ becomes the anti-diagonal class $[0_{S^2}]=[\CP^1]\oplus-[\CP^1]$. Any Lagrangian Klein bottle in $T^*S^2$ thus gives rise to a Lagrangian Klein bottle $L \subset \CP^1 \times \CP^1$ which, by \cite{LagKlein} must live in the anti-diagonal class $[\CP^1]\oplus-[\CP^1] \in H_2(\CP^1 \times \CP^1;\Z_2)$ modulo two. This contradicts the fact that $P_2(PD[L])=-\chi(L) \mod 4=0$ vanishes by Audin's result while, since $[0_{S^2}] \in H_2(T^*S^2;\Z_2)$ admits a lift $[0_{S^2}]_\Z \in H_2(T^*S^2)$, we have $P_2(PD[L])=PD([0_{S^2}]_\Z) \smile PD([0_{S^2}]_\Z) \mod 4=-2 \cdot PD[\OP{pt}]$.
\end{proof}

\begin{rmk}
 Computations of the Pontryagin square can also be used to rule out Lagrangian embeddings of $2k$-fold connected sums of $\RP^2$'s when $2-2k \neq 0 \mod 4$ for certain McDuff and torus-bundle domains.
\end{rmk}
 In the following, we restrict our attention to the case of Lagrangian embeddings $L \subset (V,\lambda)$ that are either tori or Klein bottles. One can rule out the case of $L$ being a Klein bottle. However, we did not manage to rule out the existence of weakly exact $2+2k$-fold connected sums of $\RP^2$'s for general $k \ge 1$.

\subsection{Part (1): Torus bundle domains}

The following crucial result was established in the proof of \cite[Theorem 4]{Cieliebak:Anasov}, which gives important restrictions on the homotopy class of an exact Lagrangian torus or Klein bottle in a torus-bundle domain. 
\begin{prp}[\cite{Cieliebak:Anasov}]
\label{prp:lift}
Any closed Lagrangian submanifold $\iota \colon L \hookrightarrow V$ which is either a torus or a Klein bottle lifts to a cover of $V$ which is symplectomorphic to either $(T^*(S^1 \times \R),p\,dq)$ or $(T^*\T^2,p\,dq)$.
\end{prp}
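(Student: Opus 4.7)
The plan is to analyze the image $H \coloneqq \iota_*(\pi_1(L)) \subset \pi_1(V) = \Z^2 \rtimes_A \Z$ of the fundamental group of $L$ and to lift $L$ to the corresponding cover $V_H \to V$; the key input is a severe restriction on abelian subgroups of $\Z^2 \rtimes_A \Z$ arising from the hyperbolicity of $A \in \SL_2(\Z)$.

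I would first argue that $H$ is abelian. For $L = \T^2$ this is automatic, while for the Klein bottle with $\pi_1(K) = \langle a, b \mid aba^{-1} b \rangle$ I would compute commutators in the semidirect product: writing $\bar a = (u,m)$ and $\bar b = (v,n)$, one has $\bar a \bar b \bar a^{-1} = ((I-A^n)u + A^m v,\, n)$, and setting this equal to $\bar b^{-1}$ forces first $n = 0$ and then $A^m v = -v$. Hyperbolicity of $A$ (so that $A^m$ has eigenvalues $e^{\pm m\nu}$, never equal to $-1$) yields $v=0$, contradicting $\bar b \neq 1$. Since $\pi_1(V)$ is torsion-free, $H$ is then a torsion-free quotient of the abelianisation $\Z \oplus \Z/2$, hence cyclic. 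The same identity $[(v,0),(w,n)] = ((I-A^n)v,\, 0)$ classifies the abelian subgroups of $\pi_1(V)$: each is either contained in the fibre subgroup $\Z^2$ or cyclic of the form $\langle (w,n)\rangle$ with $n \neq 0$.

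Having located $H$, covering space theory provides a lift of $\iota$ to $V_H$. When $H = \Z^2$ --- which applies to the maximal-rank torus case --- the cover $V_H$ is the infinite cyclic cover $\tilde V$ of Subsection \ref{sec:torus}, and the embedding $\tilde V \subset T^*\T^2$ together with a standard Liouville completion argument identifies $V_H$ with $T^*\T^2$ (up to exact symplectomorphism). In all remaining cases $H$ is cyclic (possibly trivial), and the target becomes to realise $V_H$ as $T^*(S^1 \times \R)$. For the Klein bottle case, the alternative $H \subset \Z^2$ can be excluded by Proposition \ref{prp:klein}: such an $H$ would lift $K$ to a Lagrangian Klein bottle in $T^*\T^2$, which is forbidden. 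Thus for the Klein bottle one is always in the case $H = \langle (w,n)\rangle$ with $n \neq 0$.

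The main technical obstacle is to produce the symplectic identification of $V_H$ with $T^*(S^1 \times \R)$ in this cyclic case with $n \neq 0$. Since $V_H$ is not, in general, a cover of the infinite cyclic cover $\tilde V$, it has to be constructed by unwrapping a combined base-and-fibre loop. Concretely, one first applies an $\SL_2(\Z)$ change of fibre coordinates aligning $w$ with a coordinate axis, then passes to the simultaneous cover that unwraps the base $S^1$ and the aligned fibre circle; the outcome is a $4$-manifold with $\pi_1=\Z$. Tracking how the Liouville form $\eta + s\beta = t\mathbf{v}\,d\boldsymbol{\theta} + s t^{-1}\mathbf{w}\,d\boldsymbol{\theta}$ transforms along this procedure, one checks that it agrees with the canonical Liouville form $p\,dq$ on $T^*(S^1 \times \R)$ up to an exact one-form. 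The heart of the argument lies in this compatibility of the Liouville form with the product structure.
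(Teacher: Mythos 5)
Your overall strategy coincides with the paper's: constrain the image $H=\iota_*\pi_1(L)$ inside $\pi_1(V)\cong\Z^2\rtimes_A\Z$ and lift $L$ to the associated cover. The group-theoretic half is correct, and is in fact more self-contained than the paper, which simply cites \cite[Lemma 4.7]{Cieliebak:Anasov}: your commutator computations in the semidirect product, using that a power of the hyperbolic matrix $A$ has no eigenvalue $-1$ (resp.\ $+1$), correctly force $H$ to be either contained in the fibre subgroup $\Z^2$ or cyclic of the form $\langle(w,n)\rangle$ with $n\neq 0$, and correctly kill the Klein-bottle relation. One small imprecision: when $H\subsetneq\Z^2$ you should lift to the cover associated to all of $\ker p_*=\Z^2$, i.e.\ to $\tilde V\subset T^*\T^2$, rather than to $V_H$ itself; you only address $H=\Z^2$ explicitly.

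The gap is in the cyclic case $n\neq 0$, exactly where the paper invokes \cite[Lemma 4.8]{Cieliebak:Anasov}. The cover you describe --- ``the simultaneous cover that unwraps the base $S^1$ and the aligned fibre circle'' --- is not the cover determined by $H=\langle(w,n)\rangle$. For $w\neq0$, the subgroup generated by $(w,0)$ and $(0,n)$ contains $(A^{kn}w,0)$ for all $k$ by conjugation, and since no nonzero integral vector is an eigenvector of a hyperbolic element of $\SL_2(\Z)$, it meets $\Z^2$ in a rank-two, finite-index subgroup; the corresponding cover is therefore a \emph{finite} cover of $V$ --- again a torus-bundle domain with non-cyclic fundamental group --- not $T^*(S^1\times\R)$. (On the alternative reading, where both circles are fully unwrapped and only the complementary fibre direction survives, $H$ is not contained in the resulting subgroup and $L$ does not lift at all.) The correct object is the quotient of the universal cover $\R^2\times(\R_{>0}\mathbf v+\R\mathbf w)$ by the single deck transformation corresponding to $(w,n)$, which mixes base and fibre directions, and producing an exact symplectomorphism from this quotient to $(T^*(S^1\times\R),p\,dq)$ is the genuinely nontrivial content that your ``one checks that the Liouville form agrees up to an exact one-form'' does not supply. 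Since the cyclic case is the only one available for the Klein bottle and also occurs for tori, the proposition is not established as written.
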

\begin{proof}
 It was shown in \cite[Lemma 4.7]{Cieliebak:Anasov} that, when $L$ is either a torus or a Klein bottle, then the fundamental group of $L$ is mapped into the image of a torus fibre in $\pi_1(V)$, or it is a cyclic subgroup of $\pi_1(V)$.

In the case when $\pi_1(L)$ is mapped into the kernel $\ker p_* \subset \pi_1(V)$ induced by the fibration $p \colon V \to S^1$, we can clearly lift $L$ to the cover $\tilde{V}\to V$ induced by the subgroup $\ker p_* \subset \pi_1(V)$, i.e.~the symplectic manifold $T^*\T^2$.

In the case when the image of $\pi_1(L)$ is contained inside a cyclic subgroup $\langle g \rangle \subset \pi_1(V)$, but not contained inside $\ker p_* \subset \pi_1(V)$, then \cite[Lemma 4.8]{Cieliebak:Anasov} implies that $L$ lifts to a cover $\tilde{V} \to V$ which is symplectomorphic to $T^*(S^1 \times \R)$.
\end{proof}
Proposition \ref{prp:klein} immediately shows that $L$ cannot be a Klein bottle.

Now consider the case when $L$, in addition, is assumed to be weakly exact. In this case, of the two alternatives for the possible lifts $\tilde{L} \subset \tilde{V} \to V$ provided by Proposition \ref{prp:lift}, only $\tilde{V}=\T^2\times (\R_{> 0}\mathbf{v}+\R\mathbf{w}) \subset T^*\T^2$ is possible; this follows immediately from Theorem \ref{thm:lalondesikorav}, which excludes weakly exact lifts to cotangent bundles of open manifolds, together with the fact that lifts of weakly exact Lagrangians again are weakly exact by Lemma \ref{lma:weaklyexact}.

The sought Hamiltonian isotopy from the lift of the torus to a standard fibre in $T^*\T^2$ then exists by Theorem \ref{thm:HamiltonianIsotopy}.

Since the latter Hamiltonian isotopy of the torus has a compact image, it is disjoint from its image of the group $\Z$ of decks transformations for which $\tilde{V}/\Z=V$, except for possibly finitely many group elements. Hence, we can take a quotient by a finite-index subgroup $m\Z \subset \Z$ which is a finite cover of $V$ in which the lifted torus is Hamiltonian isotopic to a standard fibre.

\begin{rmk}
The question of whether there exists a Hamiltonian isotopy from $L \subset V$ to a Lagrangian torus fibre is harder, since we would need to make the isotopy inside $T^*\T^2$ equivariant with respect to the bundle projection.
\end{rmk}

\subsection{Part (2): McDuff domains}
\label{sec:McDuff2}

Let $\Sigma_g$ denote the closed surface of genus $g$. The symplectic structure on the McDuff-domains can be constructed as
$$ (V,\lambda)=(D^*\Sigma_g \setminus \mathcal{O}_{0_{\Sigma}},\lambda_{can}+\eta),\:\: g \ge 2,$$
where $\mathcal{O}_{0_{\Sigma}}$ is a suitable, arbitrarily small, open tubular neighbourhood of the zero-section,
and $\eta \in \Omega^1(T^*\Sigma_g \setminus \mathcal{O}_{0_{\Sigma}})$ satisfies $d\eta=\pi^*\sigma$ with $\sigma \in \Omega^2(\Sigma_g)$ an area form. Of course, we need to choose $\eta$ so that the Liouville vector field becomes outwards pointing at both boundary components; this is where we must require the genus to satisfy $g \ge 2$. We refer to \cite{Cieliebak:Anasov} for more details.

When passing to an infinite cover
$$(\tilde{V},d\tilde{\lambda}) =(D^*\tilde{\Sigma}_g \setminus \mathcal{O}_{0_{\tilde{\Sigma}}},\lambda_{can}+\tilde{\eta})$$
induced by an infinite cover $\tilde{\Sigma}_g \to \Sigma_g$, we have $\pi^*\sigma=d\pi^*\beta$ for some one-form $\beta \in \Omega^1(\tilde{\Sigma}_g)$. In other words, there is a symplectomorphism
$$ (\tilde{V},d\tilde{\lambda})=(T^*\tilde{\Sigma}_g \setminus \mathcal{O}_{0_{\tilde{\Sigma}}},d\lambda_{can}+d\pi^*\beta) \xrightarrow{\cong} (T^*\tilde{\Sigma}_g \setminus \mathcal{O}_\beta,d\lambda_{can}),\:\: \mathcal{O}_\beta=\mathcal{O}_0+\beta, $$
induced by fibre-wise addition of the section $\beta \in \Omega^1(\tilde{\Sigma}_g)$. Note that $\mathcal{O}_\beta$ is a small open neighbourhood of the section $\beta \in \Omega^1(\tilde{\Sigma}_g)$.

The crucial topological restriction that we need for weakly exact Lagrangian tori inside McDuff domains is the following:
\begin{lma}
\label{lma:klein}
The image of the map of fundamental groups
$$\pi_1(L) \xrightarrow{f_*} \pi_1(\Sigma_g)$$
induced by a continuous map $f \colon L \to \Sigma_g$ with $L$ either a torus or a Klein bottle, and $g \ge 2$, is either trivial or isomorphic to $\Z$.
\end{lma}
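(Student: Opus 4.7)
\textbf{Proof proposal for Lemma \ref{lma:klein}.}
The plan is to reduce the statement to two well-known algebraic properties of the surface group $\pi_1(\Sigma_g)$ for $g \geq 2$: it is torsion-free, and every abelian subgroup is contained in an infinite cyclic subgroup. Both facts follow from the fact that $\Sigma_g$ is a closed hyperbolic surface, so that $\pi_1(\Sigma_g)$ is a torsion-free hyperbolic group; in such a group two commuting elements of infinite order always generate an infinite cyclic subgroup.

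First I would treat the case $L = T^2$. Here $\pi_1(T^2) \cong \Z^2$ is abelian, so its image $f_*(\pi_1(T^2)) \subset \pi_1(\Sigma_g)$ is an abelian subgroup of the surface group. By the principle just recalled, it is contained in an infinite cyclic subgroup; since $\pi_1(\Sigma_g)$ is torsion-free, the image itself must be trivial or isomorphic to $\Z$.

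For $L$ a Klein bottle I would pass to the orientation double cover $q \colon T^2 \to L$ and apply the torus case to the composition $f \circ q \colon T^2 \to \Sigma_g$. Its image $H_0 \coloneqq f_* q_* \pi_1(T^2) \subset \pi_1(\Sigma_g)$ is then trivial or infinite cyclic. Since $q_*\pi_1(T^2)$ has index two in $\pi_1(L)$, the image $H \coloneqq f_*\pi_1(L)$ contains $H_0$ with $[H:H_0] \le 2$, so $H$ is a virtually cyclic subgroup of the torsion-free group $\pi_1(\Sigma_g)$, hence torsion-free virtually cyclic. A short classification of extensions $1 \to \Z \to H \to H/H_0 \to 1$ then finishes the argument: the only torsion-free possibilities for such an $H$ are $\{1\}$ and $\Z$, since the other virtually cyclic groups $\Z \times \Z/2$ and $\Z \rtimes \Z/2$ (infinite dihedral) both contain $2$-torsion.

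The substantive step is the torus case, which is essentially the geometric content of the lemma; the Klein bottle case is a formal reduction via the orientation double cover. The only mildly delicate point is remembering that a torsion-free virtually infinite-cyclic group must itself be infinite cyclic, which is an elementary extension-theoretic check and does not require anything specific to surface groups.
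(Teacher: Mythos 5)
Your proof is correct, but it follows a genuinely different route from the paper's. The paper first bounds the rank of the image of $H_1(L) \to H_1(\Sigma_g) \cong \Z^{2g}$ (using the cup-product structure in the torus case), deduces that the image $G \subset \pi_1(\Sigma_g)$ has infinite index, invokes Jaco's theorem that infinite-index subgroups of surface groups are free, and then observes that a free quotient of $\pi_1(\T^2)$ or of the Klein bottle group must be trivial or $\Z$ (both groups being solvable, they admit no non-abelian free quotient). You instead use that $\pi_1(\Sigma_g)$ is torsion-free hyperbolic, so that its abelian subgroups are trivial or infinite cyclic; this settles the torus case directly, and you reduce the Klein bottle case to it via the orientation double cover together with the elementary classification of torsion-free virtually-$\Z$ groups. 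Both arguments rest on one classical fact about surface groups of genus $\ge 2$ (Jaco's freeness theorem versus cyclicity of centralizers/abelian subgroups), so neither is strictly more elementary; the paper's version treats both surfaces uniformly and would apply verbatim to any $L$ with solvable (or merely amenable) fundamental group, while yours avoids the homological rank count and the cup-product digression entirely, at the cost of the double-cover reduction. One small point worth making explicit in your write-up: $H_0$ is normal in $H$ because $q_*\pi_1(\T^2)$ has index two, hence is normal, in $\pi_1(L)$, so the extension $1 \to H_0 \to H \to H/H_0 \to 1$ you analyse is indeed well defined.
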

\begin{proof}
We start to investigate the map in homology
$$ \phi \colon H_1(L) \to H_1(\Sigma_g)=\Z^{2g}$$
induced by $f$. Since $g \ge 2$ and $H_1(\T^2)=\Z^2$ while $H_1(\RP^2 \sharp \RP^2)=\Z_2 \times \Z$ we conclude that the image has rank at most two or one, respectively. In fact, in the case $L=\T^2$ the rank is also one, since otherwise we could conclude that $f^* \colon H^*(\Sigma_g) \to H^*(L)$ is surjective (i.e.~that $\deg f \neq 0$) which contradicts the fact that $f^*$ is a morphism of unital rings.

It follows that the image $G \subset\pi_1(\Sigma_g)$ of $\pi_1(L) \to \pi_1(\Sigma_g)$ is of infinite index. Indeed, under the surjective quotient $\pi_1(\Sigma_g)\to H_1(\Sigma_g)$ the image of $G$ is mapped to the subgroup $\phi(H_1(L)) \subset H_1(\Sigma_g)$ which is of infinite index.

It then follows from \cite[Theorem 1]{Jaco} that the image $G$ is a free subgroup, and hence it is either trivial or isomorphic to $\Z$.
\end{proof}

\begin{lma}
\label{lma:lift}
Consider a section $\Gamma_\beta \subset T^*\tilde{\Sigma}$ where $\beta \in \Omega^1(\tilde{\Sigma})$ is a primitive of an area form on the open surface $\tilde{\Sigma}=\R^2$ or $\R \times S^1$, i.e.~the infinite cover of $\Sigma_g$ that was considered above. The symplectic manifold $(T^*\tilde{\Sigma} \setminus \Gamma_\beta,d\lambda_{can})$ is symplectomorphic to
\begin{enumerate}
\item $(\C^* \times \C,\omega_0)$ when $\tilde{\Sigma}=\R^2$;
\item $(T^*S^1 \times \C^*,d(p\,d\theta)\oplus \omega_0)$ when $\tilde{\Sigma}=\R \times S^1$.
\end{enumerate}
\end{lma}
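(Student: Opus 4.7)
The plan is to reduce both parts to an explicit linear symplectomorphism of $(T^*\R^2, d\lambda_{\mathrm{can}})$, and then, for part~(2), to descend through the universal cover $\R^2 \to \R \times S^1$. Some preliminary normalisations help. Any two area forms on $\R^2$ or $\R \times S^1$ inducing the same orientation are related by a diffeomorphism, and the cotangent lift of such a diffeomorphism preserves $\lambda_{\mathrm{can}}$; this lets me assume $\sigma = dq_1 \wedge dq_2$ (resp.\ $dx \wedge d\theta$). Fibrewise translation $\phi_\gamma(q,p) = (q, p + \gamma_q)$ by a closed one-form is also a symplectomorphism, so from $H^1(\R^2) = 0$ and $H^1(\R \times S^1) = \R \cdot [d\theta]$, an additional coordinate shift reduces $\beta$ to $q_1\, dq_2$ in case (1) and to $x\, d\theta$ in case (2). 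Then $\Gamma_\beta$ is the linear $2$-plane $\{p_1 = 0,\ p_2 = q_1\}$ in case (1), and the submanifold $\{p_x = 0,\ p_\theta = x\}$ in case (2).

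For case (1), identify $(T^*\R^2, d\lambda_{\mathrm{can}})$ with $(\R^4, \omega_0)$. The plane $\Gamma_\beta$ is spanned by $\partial_{q_1} + \partial_{p_2}$ and $\partial_{q_2}$, and a short calculation shows $\omega_0$ restricts nondegenerately, so it is a linear symplectic $2$-plane. Since $\Sp(4, \R)$ acts transitively on symplectic $2$-planes in $(\R^4, \omega_0)$ --- by choosing a symplectic basis adapted to the splitting $\R^4 = V \oplus V^{\omega_0}$ --- there is a linear symplectomorphism $A \in \Sp(4, \R)$ sending the standard plane $\C \times \{0\} = \{q_2 = p_2 = 0\}$ to $\Gamma_\beta$; concretely, one can take
\[ A(q_1, q_2, p_1, p_2) = (q_1 + q_2,\ p_2 - p_1,\ p_2,\ q_1). \]
Restricting $A$ yields the required symplectomorphism $\C^* \times \C = \R^4 \setminus (\C \times \{0\}) \to T^*\R^2 \setminus \Gamma_\beta$.

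For case (2), I lift to the universal cover $T^*\R^2 \to T^*(\R \times S^1)$ with coordinates $(x, \tilde\theta, p_x, p_{\tilde\theta})$ and deck transformation $\tau(x, \tilde\theta, p_x, p_{\tilde\theta}) = (x, \tilde\theta + 1, p_x, p_{\tilde\theta})$. The lift of $\Gamma_\beta$ is $\tau$-invariant, and case (1) applies. A direct computation using the formula for $A$ above shows
\[ A^{-1} \circ \tau \circ A \colon (q_1, q_2, p_1, p_2) \mapsto (q_1, q_2, p_1 - 1, p_2), \]
i.e.\ translation of the $p_1$-coordinate. Thus $A$ is $\Z$-equivariant and descends to a symplectomorphism from the quotient of $\C^* \times \C$ by this $\Z$-action --- which is $(\R_{q_1} \times S^1_{p_1}) \times (\R^2_{q_2, p_2} \setminus 0) \cong T^*S^1 \times \C^*$ with the canonical product symplectic form --- onto $T^*(\R \times S^1) \setminus \Gamma_\beta$.

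The step I expect to be the main obstacle is case (2): no symplectomorphism of the ambient $T^*(\R \times S^1)$ taking $\Gamma_\beta$ to a simple model submanifold is readily apparent, and passing to the universal cover is what gives access to the linear symplectic geometry of $\Sp(4, \R)$. The crucial check is that the deck action conjugates under $A$ to translation along a single coordinate of the source $\C^* \times \C$, which is what produces the product structure on the quotient.
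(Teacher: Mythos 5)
Your proof is correct. Part (1) is essentially the paper's argument: after normalising the area form and the primitive to $\beta=q_1\,dq_2$, one exhibits an explicit element of $\Sp(4,\R)$ carrying a coordinate symplectic $2$-plane onto the graph $\{p_1=0,\,p_2=q_1\}$; your matrix $A$ checks out (it is symplectic and maps $\{q_2=p_2=0\}$ onto $\Gamma_{q_1\,dq_2}$), and the paper's map differs only in the choice of identification of $T^*\R^2$ with $\C^2$. Part (2) is where you genuinely diverge. The paper stays downstairs: it identifies $T^*(\R\times S^1)$ with $T^*S^1\times\C$ and writes down an explicit global symplectomorphism $((\theta,p),(x,y))\mapsto((\theta+x+y,p),(x-p)+i(y+p))$ carrying $T^*S^1\times\{0\}$ to the graph of the chosen primitive, so no covering-space argument is needed. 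You instead lift to the universal cover, reuse the linear map $A$ from part (1), and verify that the deck transformation $\tilde\theta\mapsto\tilde\theta+1$ conjugates under $A$ to the translation $p_1\mapsto p_1-1$ of a single source coordinate; I confirmed this computation, and since that translation commutes with removing $\{q_2=p_2=0\}$, the quotient is indeed $(\R\times S^1)\times(\R^2\setminus 0)\cong T^*S^1\times\C^*$ with the product form. Your route buys uniformity (part (2) is a formal consequence of part (1) plus one equivariance check), at the cost of the covering-space bookkeeping; the paper's route buys a self-contained explicit formula on the quotient. Two cosmetic remarks: $\R^4\setminus(\C\times\{0\})$ is literally $\C\times\C^*$, so a factor swap is implicit in your statement of case (1); and your closing comment that no ambient symplectomorphism of $T^*(\R\times S^1)$ to a model is "readily apparent" is belied by the paper, though this does not affect the validity of your argument.
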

\begin{proof}
Case (1): There exists a change of coordinates on $\tilde{\Sigma}$ that makes the area form $d\beta$ into a linear area form on $\tilde{\Sigma}=\R^2_{q_1,q_2}$. After adding an exact one-form to $\beta$ (this is induced by a global symplectomorphism of the tangent bundle), we may thus assume that $\beta=q_1\,dq_2$. 

The complement of the section $\Gamma_{q_1\,dq_2}$ can be seen to be isomorphic to $\C^* \times \C$ in the following manner. First, there is a canonical symplectomorphism \begin{gather*} 
(T^*\R^2,d\lambda_{can}) = (\R^2_{\mathbf{q}} \times \R^2_{\mathbf{p}},d(\mathbf{p}\,d\mathbf{q})) \rightarrow (\C^2,\omega_0),\\
 (q_1,q_2,p_1,p_2) \mapsto (x_1,y_1,x_2,y_2)=(p_1,q_1,p_2,q_2).
\end{gather*}
 In particular the section $\Gamma_{q_1\,dq_2} \subset T^*\R^2$ is identified with the section $\{x_1=0,x_2=y_1\} \subset \C^2$ over $\mathfrak{Im}\C^2$. 
Second, the symplectomorphism
\begin{gather*}
\C^2 \to \C^2,\\
(x_1,y_1,x_2,y_2) \mapsto (x_1,y_1+y_2,y_2,x_1-x_2)
\end{gather*}
sends the symplectic plane $\{0\} \times \C$ to a linear plane that corresponds to the latter section $\Gamma_{q_1\,dq_2}$ over the canonical projection $\C^2 \to \mathfrak{Im}\C^2$.

Case (2): First, there is a canonical symplectomorphism
\begin{gather*}
(T^*(\R \times S^1),d\lambda_{can})=(\R_q \times S^1_\theta \times \R^2_{p_q,p},d(\mathbf{p}\,d\mathbf{q})) \rightarrow (T^*S^1 \times \C,d(p\,d\theta)\oplus \omega_0)\\
 (q,\theta,p_q,p) \mapsto ((\theta,p),(x,y))= ((\theta,p),(p_q,q)).
\end{gather*} Second, we consider the symplectomorphism
\begin{gather*}
(T^*S^1 \times \C, d(p\,d\theta)\oplus\omega_0) \to (T^*S^1 \times \C, d(p\,d\theta)\oplus\omega_0) \\
((\theta,p), (x,y) ) \mapsto \left(((\theta+x+y),p),(x-p)+i\left(y+p\right) \right)
\end{gather*}
which sends $T^*S^1 \times \{0\}$ to the symplectic section
$\{x=-y,p=y\}$
 
over the canonical projection $T^*S^1 \times \C \to S^1 \times \mathfrak{ Im}\C$.

As in Case (1), we may consider the case when the primitive of the area form on $\tilde{\Sigma}=S^1_\theta \times \R_q$ is equal to $\beta=q\,d\theta-q\,dq$, which corresponds to the above section $\{x=-y,p=y\}$. We again get the sought symplectomorphism.
\end{proof}

\begin{proof}[Finishing the proof of Part (2) of Theorem \ref{thm:a}]
 We start by invoking Lemma \ref{lma:klein}, which implies that there is an infinite cover $(\tilde{V},\lambda) \to (V,\lambda)$ under which $L$ lifts, where the cover is induced by a cover $\tilde{\Sigma}_g \to \tilde{\Sigma}$ with fundamental group that is cyclic and either trivial or of infinite rank. The above discussion implies that $\tilde{V}$ is the complement of an open tubular neighbourhood of a symplectic section in $T^*\tilde{\Sigma}$ where $\tilde{\Sigma}=\R^2$ or $S^1 \times \R$.
Further, by Lemma \ref{lma:lift}, there is a symplectic embedding of $(\tilde{V},d\lambda)$ into $(\C^* \times \C,\omega_0)$ in the first case, and into $(T^*S^1 \times \C^*,d(p\,d\theta)\oplus \omega_0)$ in the second case. Since $\tilde{V}$ is realised as the complement of a tubular neighbourhood of the symplectic section $\{0\} \times \C \subset \C \times \C$ and $T^*S^1 \times \{0\} \subset T^*S^1 \times \C$, respectively, the embedding of $\tilde{V}$ is clearly a homotopy equivalence. 

The existence of a Klein bottle in a McDuff domain is excluded by Proposition \ref{prp:klein} in combination with the result in the above paragraph. We are thus left with the case when $L \subset V$ is a weakly exact torus. 

If $L$ is a weakly exact Lagrangian torus, then there is no lift to $(\C^* \times \C,\omega_0)$ by Theorem \ref{thm:lalondesikorav}. Indeed, it is easy to see that the latter symplectic manifold admits a symplectic embedding into $T^*(S^1 \times \R)$, where the embedding moreover is a homotopy equivalence. We conclude that $L$ is a weakly exact torus that admits a weakly exact lift to a cover $(\tilde{V}=T^*S^1 \times \C^*,d(p\,d\theta) \oplus \omega_0)$ of $V$. Moreover, this cover is induced by a cover $\tilde{\Sigma}_g \to \Sigma_g$ of the base of the bundle $V \to \Sigma_g$ that corresponds to an infinite cyclic subgroup $\Z \cong H \subset \pi_1(\Sigma_g)$ equal to the image of $\pi_1(L)$ under the map of fundamental groups induced by the composition $L \to \Sigma_g$. 

 We can consider a standard circle-bundle torus in the non-compact McDuff domain $\tilde{V}\to\tilde{\Sigma}$. By the classification result Theorem \ref{thm:HamiltonianIsotopy} any two such tori are smoothly isotopic through weakly exact Lagrangian tori. If $L$ is not merely weakly exact, but even exact, then we proceed as follows. By \cite[Section 4.1]{Cieliebak:Anasov} there is an exact circle-bundle torus in $\tilde{V}$ that is constructed from the unique closed embedded geodesic in $\tilde{\Sigma}_g \cong \R \times S^1$. After making the initial neighbourhood $\mathcal{O}_{0_\Sigma}$ sufficiently small, we can assume that the Hamiltonian isotopy provided by Theorem \ref{thm:HamiltonianIsotopy} is confined to $\tilde{V} \cong T^*S^1 \times \C^*$. 

What remains is to show that the infinite cover $\tilde{V}\to V$ factorises through a $k$-fold cover $\tilde{V} \to \tilde{V}_k \to V$, where $k \gg 0$ is sufficiently large, under which the above isotopies of tori $\tilde{L}_t \subset \tilde{V}$ remains embedded when composed with the covering map $\tilde{V} \to \tilde{V}_k$. To that end, we will need the following corollary of a result \cite[Theorem 3.3]{Scott} by Scott.
\begin{cor}Let $p \colon \tilde{\Sigma}_g \to \Sigma_g$ be a possibly infinite cover of a compact surface that corresponds to a finitely generated subgroup $H \subset \pi_1(\Sigma_g)$. For any finite set of points $\{x_1,\ldots,x_N\} \subset \Sigma_g$ and finite subset of the fibres $S \subset \bigcup_{i} p^{-1}(x_i)$, the cover $p$ can be factorised through a \emph{finite} $k$-fold cover $\tilde{\Sigma}_g \xrightarrow{\tilde{p}} (\Sigma_g)_k \to \Sigma_g$ under which $\tilde{p}|_S$ is injective.
\end{cor}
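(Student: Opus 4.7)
The plan is to invoke Scott's theorem \cite{Scott} that surface groups are LERF (subgroup separable): for every finitely generated subgroup $H \subset \pi_1(\Sigma_g)$ and every element $g \in \pi_1(\Sigma_g) \setminus H$, there exists a finite-index subgroup $K \subset \pi_1(\Sigma_g)$ with $H \subset K$ and $g \notin K$. The desired finite cover $(\Sigma_g)_k \to \Sigma_g$ will be the one corresponding to a subgroup $K \supset H$ that is constructed to simultaneously avoid a finite list of elements of $\pi_1(\Sigma_g) \setminus H$ that record when two points of $S$ collide.

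First I would set up the standard dictionary between fibers and cosets. Fixing a basepoint $x_0 \in \Sigma_g$, a lift $\tilde{x}_0 \in \tilde{\Sigma}_g$, and paths $\gamma_i$ from $x_0$ to each $x_i$, points of $p^{-1}(x_i)$ are in bijection with the right cosets $H\backslash \pi_1(\Sigma_g)$. Under this correspondence, two distinct points in the same fiber $p^{-1}(x_i)$ are represented by distinct cosets $Hg_1 \neq Hg_2$, and for any intermediate cover associated to a subgroup $K$ with $H \subset K$, the two points remain distinct in $(\Sigma_g)_k$ precisely when $Kg_1 \neq Kg_2$, i.e.\ when $g_2 g_1^{-1} \notin K$.

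Second, I observe that points of $S$ lying over distinct base points $x_i \neq x_j$ in $\Sigma_g$ are automatically separated in any cover of $\Sigma_g$, so it suffices to separate, for each $i$, the finitely many points of $S \cap p^{-1}(x_i)$. Listing all pairs of distinct such points and recording their coset ratios yields a finite subset $\{g^{(1)},\ldots,g^{(N)}\} \subset \pi_1(\Sigma_g) \setminus H$. By Scott's LERF theorem, for each $g^{(\ell)}$ there is a finite-index subgroup $K_\ell \subset \pi_1(\Sigma_g)$ containing $H$ but not containing $g^{(\ell)}$. Setting $K \coloneqq \bigcap_{\ell=1}^N K_\ell$ gives a finite-index subgroup still containing $H$ and still avoiding each $g^{(\ell)}$; the corresponding finite cover $(\Sigma_g)_k \to \Sigma_g$ then factors $p$ (since $H \subset K$) and has the property that $\tilde{p}|_S$ is injective by the discussion above.

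The only non-trivial input is Scott's LERF theorem; everything else is bookkeeping with cosets and the standard correspondence between covers and subgroups. In particular, no new small-cancellation or hyperbolic-geometry argument is needed on top of \cite{Scott}.
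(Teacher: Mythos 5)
Your proposal is correct and follows essentially the same route as the paper: both reduce the problem to separating finitely many coset pairs via the covering-space/subgroup dictionary, invoke Scott's separability theorem for each offending element, and combine the resulting finite-index subgroups by intersection (the paper phrases this last step as an iteration, but it rests on the same fact that a finite intersection of finite-index subgroups containing $H$ is again of that type).
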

\begin{proof}
Any larger subgroup $H \subset G \subset \pi_1(\Sigma_g)$ yields a cover $p_G \colon (\Sigma_g)_G \to \Sigma_g$ through which the original cover $p$ factorises as a composition
$$\tilde{\Sigma}_g \xrightarrow{\tilde{p}_G} (\Sigma_g)_G \xrightarrow{p_G} \Sigma_g.$$
The goal is to find $G$ which is sufficiently large in order for it to be of finite index (i.e.~the corresponding cover is finite), but which is small enough so that the image of the finite subset $ S$ remains embedded (i.e.~the cover should not collapse too many fibres of $\tilde{\Sigma}_g \to \Sigma_g$).

The fibre of $p^{-1}(x_i)$ can be identified with the cosets $\pi_1(\Sigma_g)/H$, if the base-point is chosen at $x_i$. Two elements $aH \neq bH \in \pi_1(\Sigma_g)/H$ of $S$ are identified in $\pi_1(\Sigma_g)/G$ if and only if $a=bg$ for some $g \in G$. Using \cite[Theorem 3.3]{Scott} we can find an extension $G \supset H$ where $G \subset \pi_1(\Sigma_g)$ is of finite index, but does not contain the element $b^{-1}a \in \pi_1(\Sigma_g) \setminus H$. For such an extension, it thus follows that the fibres $aH,bH \in S$ are not identified under $\tilde{p}_G$.

We can then iterate the argument with $\Sigma_g$ replaced by this finite cover $(\Sigma_g)_G$ in order to find a larger cover under which more elements in $S$ become separated. To that end, we just need the basic fact that the intersection of a finite set of finite-index subgroups of $\pi_1(\Sigma_g)$ that contain $H$ is again a subgroup of the same type. Since the set $S$ is finite, this process terminates.
\end{proof}
Using a compactness argument, we can then readily pass to a sufficiently large, but finite, cover $\tilde{V} \xrightarrow{\tilde{p}} \tilde{V}_k \to V$ for which the images $\tilde{p}(\tilde{L}_t)$ remain embedded for all $t$.
\end{proof}

\section{Conditions for non-vanishing wrapped Floer cohomology (Theorems \ref{thm:b} and \ref{thm:b2})}
As in the assumptions of the theorem, we let $(X,\lambda)$ be a connected Liouville domain with boundary components $\partial X=\bigsqcup_{i \in \pi_0(\partial X)} (\partial X)_i$ and $L \subset X$ a connected exact Lagrangian with cylindrical ends, whose Legendrian boundary has an induced decomposition $\partial L=\bigsqcup_{i \in I_L} (\partial L)_i$. Here $I_L\subset \pi_0(\partial X)$ are the connected components of $\partial X$ that have non-empty intersection with $\partial L$, i.e.~ $(\partial L)_i=\partial L \cap (\partial X)_i \neq \emptyset$. We extend $L \subset X$ to a properly embedded exact Lagrangian of the completion $\hat{L} \subset (\hat{X},\lambda)$ by adjoining the Lagrangian cylinders
$$[0,+\infty) \times \Lambda \subset ([0,+\infty) \times Y,e^\tau\alpha)$$
contained inside the cylindrical end.

\subsection{ Conventions for the Floer homological set-up}

Here we describe the setup of the Floer complexes that we will be using for defining symplectic cohomology and wrapped Floer cohomology. We mostly follow the conventions of Ritter from \cite{Ritter:TQFT}, except that our convention of the action of the generators have the opposite sign compared to his.

\subsubsection{Wrapping and direct limit}

Symplectic cohomology and wrapped Floer cohomology are both defined as certain direct limits of Floer complexes that are induced by time-dependent Hamiltonians $H^a_t$ on the completion $(\hat{X},\lambda)$ that satisfy appropriate growth conditions. Our convention is that $H^a_t \colon \hat{X} \to \R$ takes the form $H^a_t=ae^{\tau}+b$ in the subset
$$\{ \tau \ge 0\} \subset ((-\infty,+\infty)_\tau \times Y,e^\tau \alpha) \subset (\hat{X},\lambda)$$
of the collar and cylindrical end, and where the sequence
$$a \in \{a_0 < a_1< a_2 <\cdots \}$$
is generic and satisfies $\lim_{i \to +\infty} a_i = +\infty$. We will impose some additional requirements on these systems of Hamiltonians near the compact part $X$, which are described in Subsection \ref{sec:hamiltonian} below.

 In the case of symplectic cohomology $SH^*(X)$, the complexes are generated by periodic Hamiltonian time-1 orbits, while in the case of wrapped Floer cohomology $HW^*(L,L)$ of a Lagrangian $L$, the complexes are generated by Hamiltonian time-1 chords that start and end on $L$. More precisely, for each slope $a$, there are complexes $SC^*(X;a)$ and $CW^*(L,L;a)$ for which the generators are induced by the Hamiltonian $H^a_t$. The symplectic and wrapped Floer cohomologies are then defined as certain direct limits as $a \to +\infty$. One should note that there are several different constructions that give rise to the same homology groups in the end.

A crucial feature of this set-up is that there are natural continuation maps from the complex of slope $a_i$ to the complex of slope $a_j$ whenever $a_j>a_i$. The original version of symplectic cohomology was defined as a direct limit $\lim_{a \to +\infty} SH^*(X;a)$ of the homology groups induced by the continuation maps; the wrapped Floer cohomology has a completely analogous construction. This approach was taken by e.g.~Ritter in \cite{Ritter:TQFT}. A more modern approach is to define a single complex $SC^*(X)=\lim_{a \to +\infty} SC^*(X;a)$ given as a homotopy co-limit of the complexes defined for finite slopes; see e.g.~work by Abouzaid--Seidel \cite{Abouzaid:OpenString}

The end results of these different constructions are the same, in the sense that they give isomorphic homologies. In addition, in either construction, the concerned complexes all consist of generators that are Hamiltonian orbits or chords for the Hamiltonians $H^a_t$. In the following we will consider Floer complexes $SC^*(X;a)$ and $CW^*(L,L;a)$ for some fixed slope $a$, which will be suppressed from the notation. The analysis performed for these complexes can then be used to derive the results for either of the two versions of the direct limit. 

\subsubsection{Action of generators and Floer strips}

The Floer cylinders and strips counted by the differentials in our complexes are described in Section \ref{sec:floerstrips} of Appendix \ref{sec:appendix}. More precisely, the Floer strip equation is given in Equation \eqref{eq:floerstrip}, while the action conventions that we use for the periodic Hamiltonian orbits and Hamiltonian chords with endpoints on $L$ are given in Equations \eqref{eq:actionclosed} and \eqref{eq:actionopen}, respectively. This means that the differential \emph{decreases} the action. Note that our action differs from that in \cite{Ritter:TQFT} by a minus sign. Here we summarise the most important features of our conventions:
\begin{itemize}
\item When $H$ is an autonomous $C^2$-small Hamiltonian, the 1-periodic orbits can be assumed to be constant $x \in \OP{Crit}(H)$. The differential corresponds to a deformation of the Morse \emph{co}homology differential for $H$ which counts flow-lines of the gradient $\nabla H$. However, unlike the usual convention in Morse cohomology, the action of $x$ is equal to $-H(x)$ with our definition.
\item When $H$ is an autonomous $C^2$-small Hamiltonian for which $H|_L$ is Morse, the time-1 Hamiltonian chords from $L$ to $L$ can be assumed to be constant $x \in \OP{Crit}(H|_L)$ with action equal to $-H(x)$. The differential corresponds to a deformation of the Morse \emph{co}homology differential for $H|_L$.
\item If $H$ in $([a,b]_\tau \times Y,d(e^\tau\alpha))$ is $C^0$-small, only depends on $\tau$, and satisfies $\partial_\tau H>0$ and $\partial_\tau^2 H > 0$, then the periodic Hamiltonian orbits, and Hamiltonian chords with endpoints on the Lagrangian $[a,b] \times \Lambda$, correspond to periodic Reeb orbits in $(Y,\alpha)=(\partial X,\lambda|_{T\partial X})$ and Reeb chords on $\partial L \subset (Y,\alpha)$, respectively. The actions of these generators are roughly equal to the Reeb length in the contact manifold (i.e.~integration of $\alpha$ along the Reeb chord) if the Lagrangian is endowed with the vanishing potential (i.e.~primitive of $e^\tau\alpha|_{TL}$).
\item Even if the grading will not play any important role in our proofs, it can still be enlightening to pin-point the most natural grading convention. We use the same convention as in \cite{Ritter:TQFT}, where the aforementioned generators that correspond to critical points of $H$ are graded as in Morse cohomology. That is, the degree of $x \in \OP{Crit}(H)$ is equal to the Morse index of $x$, and the differential is of degree $+1$. (In general, however, grading must be taken in $\Z_2$.)
\end{itemize}

\subsubsection{Behaviour of the Hamiltonian near the boundary, and decomposition into small and large action complexes}
\label{sec:hamiltonian}
 
Recall the requirement that $H^a_t=a\tau+b$ in the subset $\{\tau \ge 0\} = \hat{X} \setminus \OP{int}X$. In order to gain additional control of the Floer complex we need to impose constraints on the Hamiltonians $H^a_t$ in $X$ as well. In particular, we need some precise control on 
$H^a_t$ in the collar 
$$((- 4 \epsilon,0]_\tau \times Y,e^\tau \alpha) \subset (X,\lambda)$$
in the complement of the cylindrical end, where $\epsilon>0$ is sufficiently small. In particular, we may assume that the Lagrangian $L \subset (X,\lambda)$ is cylindrical in this collar, i.e.~that it coincides with $(-4\epsilon,0] \times \Lambda$ there for a Legendrian $\Lambda \subset (Y,\alpha)$. For any $S \le T \le 0$ we write
$$X_{T}\coloneqq \phi^{-T}_\zeta(X),\:\:X_{[S,T]}=X_T \setminus \OP{int}{X_S}, \:\: \text{and} \:\: X_{(S,T)}=(\OP{int} X_T) \setminus X_S.$$
(Note that $\phi^{-t}$ is just translation of the $\tau$-coordinate by $-t$ in the above symplectisation coordinates.) Our additional requirements are that: 
\begin{itemize}
\item $H^a_t$ is a $C^{0}$-small function on $X$, which moreover is
\begin{itemize}
 \item autonomous (independent of $t$), independent of the parameter $a>0$, and Morse in all of $X_{-\epsilon/2}$; 
 \item non-negative everywhere except in $X_{[-\epsilon-\epsilon/3,-\epsilon+\epsilon/3]}$
 \end{itemize}
\item The partial derivative in the symplectisation direction $\partial_\tau$ satisfies:
\begin{itemize}
\item In $\hat{X} \setminus X_{-\epsilon}$ we have $\partial_\tau H^a_t > 0$ and $\partial_\tau^2 H^a_t>0$; 
\item In the hypersurface $\{-2\epsilon, -\epsilon\} \times Y$ we have $\partial_\tau H^a_t = 0$;
\item In $X_{(-2\epsilon,-\epsilon)}$ we have $\partial_\tau H^a_t < 0$;
\item In $X_{(-4\epsilon,-2\epsilon)}$ we have $\partial_\tau H^a_t > 0$.
\end{itemize}
\item $H^a_t=h(\tau)+\delta f$ in $X_{[-2\epsilon-\epsilon/3, -2\epsilon/3]}$ for some Morse function $f \colon Y \to (-1,0]$, where the parameter $\delta>0$ is sufficiently small;
\item $H^a_t=a' e^\tau+b'$ holds near the hypersurface $\{-2\epsilon-\epsilon/2,-\epsilon/2\} \times Y$, where $a'>0$ and $b'\ge0 $ are both sufficiently small; and
\item $0<\min_{X_{-3\epsilon}}H^a_t < \max_{X_{-3\epsilon}}H^a_t = h(-3\epsilon)$.
 
\end{itemize}
 
This type of Hamiltonian gives rise to the following generators.

\emph{Generators of symplectic cohomology:} For a generic sequence $a_1<a_2<\ldots$ the generators of periodic Hamiltonian orbits in $\hat{X}$ of $H^{a_i}_t$ all live inside $X$, and are of the following four types:
\begin{enumerate}[label=(\Alph*)]
 \item \emph{Generators in $X_{-3\epsilon}$:} These generators all have actions contained in the interval $(-h(-3\epsilon),0) \subset \R$;
 \item \emph{Generators in $\{-2\epsilon\} \times Y$:} These generators correspond to the critical points of the Morse function $f$ on $Y$, where the action of a generator $x \in \OP{Crit}(f)$ is equal to $\mathcal{A}(x)= -h(-2\epsilon) - \delta f(x)$, and thus
 $$-h(-2\epsilon) \le \mathcal{A}(x) \le -h(-3\epsilon) < 0,$$
 for $\delta>0$ sufficiently small. Note that the the degrees of these generators are one larger than the Morse index of the corresponding critical point in $f$;
 \item \emph{Generators in $\{-\epsilon\} \times Y$:} These generators also correspond to critical points of the Morse function $f$ on $Y$, where the action of $x \in \OP{Crit}(f)$ is equal to $\mathcal{A}(x)=-h(-\epsilon) - \delta f(x) $, and thus
 $$0<-h(-\epsilon) \le \mathcal{A}(x) \le -h(-\epsilon)+\delta.$$
 Here the degree of the generator is equal to the Morse index of the corresponding critical point.
 \item \emph{Generators in $\hat{X}\setminus X_{-\epsilon/2}$:} These generators correspond to the periodic Reeb orbits of $(Y,\alpha)$ of Reeb length less than $a_i$, and have action roughly equal to the corresponding Reeb lengths. Since $H^{a_i}_t$ is assumed to be $C^0$-small in $X\setminus X_{-\epsilon/2}$, we can assume that these generators all have action strictly greater than all generators described in (A)--(C) above.
\end{enumerate}

We then proceed with the analogous action computations in the case of wrapped Floer homology $HW^*(L,L)$. In this case, however, the action also depends on a choice of primitive $g$ of the pull-back of the Liouville form $\lambda|_{TL}=dg$ to $L$. The following basic lemma shows that we can replace our Lagrangian with one for which the primitive can be taken to be arbitrarily small, and moreover vanishing near the boundary.
\begin{lma}
 After a Hamiltonian isotopy of $L\subset (X,\lambda)$ that is supported away from the boundary, we may assume that the pull-back $\lambda|_{TL} \in \Omega^1(L)$ has a primitive that vanishes on the entire boundary $\partial L \subset Y=\partial X$, while it is arbitrarily $C^0$-small in the interior.
\end{lma}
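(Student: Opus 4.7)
The plan is to employ the standard ``graph of $-dg$'' construction inside a Weinstein tubular neighborhood of $L$. First, I would fix a primitive $g \colon L \to \R$ of $\lambda|_{TL}$. By the cylindrical-end hypothesis $\lambda|_{TL}$ vanishes in a collar of $\partial L$, so $g$ is locally constant near $\partial L$, and after adjusting $g$ by an additive constant we may assume $g \equiv 0$ in a neighborhood of $\partial L$ inside $L$.

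Second, I would invoke Weinstein's Lagrangian neighborhood theorem to obtain a symplectic embedding $\iota \colon \nu \hookrightarrow X$ of a fiber-convex open neighborhood $\nu$ of the zero section in $T^*L$, sending $L$ to $L$. Using the deformation retraction of $\nu$ onto $L$ together with $\lambda|_L = dg$, the identification can be refined so that
\[
\iota^*\lambda = p\,dq + d(g \circ \pi), \qquad \pi \colon T^*L \to L.
\]
Define the Hamiltonian $H \coloneqq \chi \cdot (g \circ \pi)$ where $\chi \in C^\infty_c(\nu)$ is a cutoff equal to $1$ on a smaller fiber-convex neighborhood containing the graph of $-dg$. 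Because $g$ vanishes near $\partial L$, so does $H$, so after extension by zero $H$ defines a smooth Hamiltonian on $X$ whose support avoids $\partial X$.

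Where $\chi \equiv 1$, one has $X_H = -dg(q) \cdot \partial_p$, so its time-$t$ flow carries the zero section $L$ to the Lagrangian graph $\{(q, -t\,dg(q))\}$. A direct pull-back computation using the normal form above yields $\iota_t^*\lambda = (1-t)\,dg$, so the deformed Lagrangian admits the primitive $(1-t)g$; taking $t \in [0,1]$ sufficiently close to $1$ makes this primitive arbitrarily $C^0$-small on the interior while it continues to vanish on $\partial L$. The main technical subtlety is establishing the precise Weinstein normal form $\iota^*\lambda = p\,dq + d(g \circ \pi)$ rather than the weaker $p\,dq + \eta$ for a merely closed $\eta$; once that refinement is in place, and provided $\nu$ is chosen fiberwise-large enough to contain the graph of $-dg$ (possible because $L$ is compact and $|dg|$ is bounded), the graph-contraction calculation is immediate.
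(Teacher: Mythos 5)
There is a genuine gap, and it is exactly the point your normalization step sweeps under the rug. You write that since $g$ is locally constant near $\partial L$, ``after adjusting $g$ by an additive constant we may assume $g\equiv 0$ in a neighborhood of $\partial L$.'' But $L$ is connected, so the primitive $g$ is unique up to a \emph{single} global constant, while $\partial L$ may have several components on which $g$ takes \emph{different} constant values $c_i$. One additive constant can kill at most one of these values; the differences $c_i-c_j$ cannot be removed this way, and they are precisely the ``action-differences between different boundary components'' that the paper's proof has to address separately. This is not a corner case: the whole point of the surrounding section (Theorem \ref{thm:b}, the decomposition over $I_L$) is Lagrangians whose boundary meets several components of $\partial X$. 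If you run your graph construction without this normalization, the deformed Lagrangian has primitive $(1-t)g$, which near the $i$-th boundary component equals $(1-t)c_i$ --- arbitrarily small for $t$ close to $1$, but not zero, so the conclusion that the primitive \emph{vanishes} on all of $\partial L$ is not obtained. The paper repairs this with an additional ``wrapping'' Hamiltonian isotopy applied separately in each collar component (a Hamiltonian of the form $h_i(\tau)$, generating a multiple of the Reeb field, whose value is a nonzero constant near the corresponding component of $\partial X$; the vector field vanishes there, so the isotopy is still supported away from the boundary, but the primitive shifts by a tunable constant on that component). An equivalent fix in your setup is to add to your Hamiltonian a function that is locally constant near $\partial X$, with a different constant chosen for each component so as to cancel the residual $(1-t)c_i$.

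Two further comments. First, your main mechanism --- replacing $L$ by the graph of $-t\,dg$ in a Weinstein neighbourhood, so that the primitive rescales to $(1-t)g$ --- is a legitimate local alternative to the paper's global argument, which instead applies the negative Liouville flow and uses $(\phi^{-T}_\zeta)^*\lambda=e^{-T}\lambda$ to scale the primitive by $e^{-T}$; both achieve the $C^0$-smallness in the interior. Second, your parenthetical that $\nu$ can be ``chosen fiberwise-large enough to contain the graph of $-dg$'' is not justified: the Weinstein neighbourhood theorem hands you a neighbourhood of some fixed size determined by the ambient geometry, and if $\lvert dg\rvert$ exceeds that size the graph leaves it. This is fixable --- either first shrink $g$ with the negative Liouville flow (which is essentially the paper's route) or iterate your construction with small time steps, since each step contracts $\lvert dg\rvert$ by the factor $(1-t)$ --- but as written it is an unproved assumption.
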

\begin{proof}
The negative Liouville flow applied to $L$ gives rise to an isotopy $\phi^{-t}_\zeta(L)$ through exact Lagrangians that, since $L$ is tangent to the Liouville flow near $\partial X$, can be extended to an exact Lagrangian isotopy that is fixed near the boundary. Moreover, since the isotopy preserves exactness, a standard result implies that it is generated by an ambient Hamiltonian isotopy.

Further, since $(\phi^{-t}_\zeta)^*\lambda=e^{-t}\lambda$, it follows that we can find a primitive that is arbitrarily $C^0$-small after such an isotopy. However, even if any primitive necessarily is locally constant in each cylindrical region near the boundary, there might still be action-differences between different boundary components. This can finally be amended by an appropriate slight wrapping of each separate component of the collar of $L$. More precisely, near each component of the collar
$$ (-4\epsilon,0]\times\Lambda \subset ((-4\epsilon,0]_\tau \times Y,d(e^\tau \alpha)),$$
we apply the Hamiltonian isotopy induced by a Hamiltonian vector-field of the form $\rho_i(\tau)R_\alpha$. Here $R_\alpha \in \Gamma(TY)$ is the Reeb vector field associated to $\alpha$, and $\rho_i(\tau)$ is a suitable function with compact support in $(-4\epsilon,0) \subset \R_\tau$ depending on the component $i \in \pi_0(\partial L)$. The effect of this wrapping on the action, i.e.~the primitive of the pull-back of $\lambda$ to the Lagrangian, is a computation that we leave to the reader.
\end{proof}

Since wrapped Floer homology is invariant under Hamiltonian isotopies of the Lagrangian, we will in the following assume that $L$ satisfies the properties of the above lemma.

\emph{Generators of wrapped Floer homology:} For a Lagrangian $L$ which is cylindrical in the subset $X_{[-4\epsilon,0]}$ we have generators that are Hamiltonian chords from $L$ to $L$ of the three different types as above, but where the Morse functions instead are the corresponding restrictions to $L$, and where the generators in the last bullet-point correspond to Reeb chords from $\Lambda \subset (Y,\alpha)$ to itself of length less than $a_i$ instead of periodic Reeb orbits.

For the systems of Hamiltonians as above there are well-known induced cone structures
\begin{gather*}
SC^*(X)=\OP{Cone}(\delta),\\
 \delta \colon SC^*_\infty(X) \to SC^*_0(X),
 \end{gather*}
of the symplectic cohomology complex as well as
\begin{gather*}
CW^*(L,L)=\OP{Cone}(\delta),\\
 \delta \colon CW^*_\infty(L,L) \to CW^*_0(L,L)
 \end{gather*}
of the wrapped Floer cohomology complex. Here the source complexes $SC^*_\infty(X)$ and $CW^*_\infty(L,L)$ consists of the generators contained in the subset $ \hat{X} \setminus X_{-\epsilon/2}$ of the cylindrical end (i.e.~corresponding to Reeb orbits and chords), while the target complexes $SC^*_0(X)$ and $CW^*_0(L,L)$ consist of generators contained in $X_{ -\epsilon/2}$ (i.e.~low energy Hamiltonian orbits and chords).
\begin{lma}[Lemma 2.1 in \cite{CieliebakOancea}]
\label{lma:zeropart}
The sub-spaces $SC^*_0(X)\subset SC^*(X)$ and $CW^*_0(L,L) \subset CW^*(L,L)$ are sub-complexes whose homology computes $ H^*(X)$ and $ H^*(L)$, respectively.
\end{lma}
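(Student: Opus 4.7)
The Floer differential decreases the action, so it suffices to exhibit a uniform action gap between the generators spanning $SC^*_0(X)$ (types (A), (B), (C)) and those in $SC^*_\infty(X)$ (type (D)). From the action computations listed above, the (A)--(C) generators all have actions of order $\|H^a_t\|_{C^0(X)}$, which is tunably small; the (D) generators are periodic Hamiltonian orbits in the cylindrical end corresponding to Reeb orbits of $(Y,\alpha)$, with actions approximately equal to the positive Reeb periods, and hence bounded below by the minimum period $T_{\min}>0$. Choosing $\|H^a_t\|_{C^0(X)}\ll T_{\min}$ yields the desired gap, so $SC^*_0(X)$ is closed under the differential. The wrapped case is identical: invoking the preceding lemma we arrange that the primitive of $\lambda|_{TL}$ vanishes on $\partial L$, so that the actions of the high-energy generators of $CW^*(L,L)$ are bounded below by the minimum Reeb chord length of $\Lambda\subset(Y,\alpha)$.

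\textbf{Identifying the homology.} On $X_{-\epsilon/2}$ the Hamiltonian $H^a_t$ is autonomous, time- and parameter-independent, $C^2$-small and Morse, with critical points enumerating precisely the (A)--(C) generators. By the classical Floer--Morse comparison (Floer, and Salamon--Zehnder), for a generic compatible almost complex structure the Floer differential between these constant 1-periodic orbits coincides with the Morse cohomology differential of $H^a_t|_{X_{-\epsilon/2}}$, with the degree of a critical point equal to its Morse index. One checks directly that the Morse indices agree with the Floer grading already recorded: the (B) generators at $\tau=-2\epsilon$ acquire a $+1$ shift from the unstable $\tau$-direction at the $\tau$-local maximum, while (C) generators at $\tau=-\epsilon$ inherit no shift (since $\tau=-\epsilon$ is a $\tau$-local minimum). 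Finally, since $\nabla H^a_t$ points outward along $\partial X_{-\epsilon/2}$ (where $H^a_t=a'e^\tau+b'$ gives $\partial_\tau H^a_t>0$), the resulting Morse cohomology equals $H^*(X_{-\epsilon/2})\cong H^*(X)$, as claimed. The same reasoning applied to $H^a_t|_L$ yields $H^*(L)$ for the wrapped version.

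\textbf{The main obstacle} is the confinement of Floer trajectories. A priori the Floer strip equation is posed on all of $\hat X$, so trajectories with endpoints in $SC^*_0(X)$ could make excursions into the cylindrical region $\tau>-\epsilon/2$, which would destroy the Floer--Morse reduction. This is the sole place where the specific exponential form $H^a_t=a'e^\tau+b'$ near $\tau=-\epsilon/2$ is essential: an integrated maximum principle in the spirit of Abouzaid--Seidel, applied with an almost complex structure of contact type in this region, forbids any such excursion. With this confinement established, the Floer differential on $SC^*_0(X)$ reduces to the Morse one and both assertions of the lemma follow at once.
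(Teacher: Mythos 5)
Your proof is correct and follows essentially the same route as the paper's: the subcomplex property is deduced from the action computations of Subsection \ref{sec:hamiltonian} together with the fact that the Floer differential decreases action, and the homology identification proceeds by confining trajectories to $X_{-\epsilon/2}$ via the no-escape lemma (the integrated maximum principle of Abouzaid--Seidel) and then invoking Floer's classical Morse--Floer comparison. The additional details you supply on the $+1$ grading shift at the $\tau$-local maximum $\{\tau=-2\epsilon\}$ versus no shift at $\{\tau=-\epsilon\}$ are consistent with the degree conventions recorded in the paper.
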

\begin{proof}
\emph{The sub-complex property:} This follows from the action computations in Subsection \ref{sec:hamiltonian} above, together with the fact that the Floer differential decreases the action. 

\emph{The homology computation:} This is a standard application of Floer's computation from \cite{Floer:Morse}. To that end, we need the no-escape lemma for Floer trajectories; see \cite[Lemma 2.2]{CieliebakOancea} for the case of symplectic cohomology, and \cite[Lemma 7.2]{Abouzaid:OpenString} or \cite[Lemma 3.1]{EkholmOancea} for the case of wrapped Floer cohomology. By the no-escape lemma, a Floer strip that connects two generators contained inside $ X_{-\epsilon/2}$ must be contained entirely inside the same subset. Finally, since the Hamiltonian is $C^2$-small, the complex can be assumed to coincide with the Morse cohomology complex by Floer's original argument.
\end{proof}
\subsection{ Decomposition induced by components of the contact boundary.}
 Recall that we denote the components of the contact boundary by $\partial X=\bigsqcup_{i \in \pi_0(\partial X)} (\partial X)_i,$ which induces a decomposition of the boundary of any Lagrangian $(\partial L)_i\coloneqq \partial L \cap (\partial X)_i$. This decomposition induces a natural decomposition of the vector spaces
$$ SC^*_\infty(X)=\bigoplus_{i \in \pi_0(\partial X)} SC^*_\infty((\partial X)_i)$$
of symplectic cohomology complex and
$$ CW^*_\infty(L,L)=\bigoplus_{i \in I_L} CW^*_\infty((\partial L)_i,(\partial L)_i)$$
of the wrapped Floer cohomology complex. The following basic neck-stretching argument implies that these decompositions also hold on the level of complexes.
\begin{lma}
\label{lma:complexdecomp}
The complexes $SC^*_\infty(X)$ and $CW^*_\infty(L,L)$ both respect the decompositions
\begin{gather*}
SC^*_\infty(X)=\bigoplus_{i \in \pi_0(\partial X)} SC^*_\infty((\partial X)_i)\\
\text{and}\\
CW^*_\infty(L,L)=\bigoplus_{i \in I_L} CW^*_\infty((\partial L)_i,(\partial L)_i)
\end{gather*}
corresponding to the decompositions of boundary components.
\end{lma}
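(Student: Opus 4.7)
First I would note that the decompositions
\[
SC^*_\infty(X)=\bigoplus_{i\in\pi_0(\partial X)} SC^*_\infty((\partial X)_i), \qquad CW^*_\infty(L,L)=\bigoplus_{i\in I_L} CW^*_\infty((\partial L)_i,(\partial L)_i)
\]
hold at the level of graded vector spaces for trivial reasons: by the description of the generators in Subsection \ref{sec:hamiltonian}, the $1$-periodic Hamiltonian orbits constituting $SC^*_\infty(X)$ all lie in the cylindrical end $\hat X\setminus X_{-\epsilon/2}=\bigsqcup_i V_i$ with $V_i\coloneqq(-\epsilon/2,+\infty)\times(\partial X)_i$, and each orbit lies in exactly one $V_i$; analogously, since the cylindrical Lagrangian $L$ restricts to $\bigsqcup_{i\in I_L}(-\epsilon/2,+\infty)\times(\partial L)_i$ in the cylindrical end, each Hamiltonian chord generator of $CW^*_\infty(L,L)$ lies in exactly one such piece.

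The real content of the lemma is that the induced differentials $\partial_\infty$ respect these decompositions, where $\partial_\infty$ is obtained by quotienting the Floer differential modulo $SC^*_0$ (respectively $CW^*_0$) via the cone structure of Lemma \ref{lma:zeropart}. The plan is to prove this by a neck-stretching argument along the contact-type hypersurface $\{-\epsilon/2\}\times\partial X$: choose translation-invariant SFT-compatible almost complex structures on the cylindrical ends together with a family $J_T$ that stretches this hypersurface as $T\to+\infty$, and modify $H^a_t$ by a compactly supported perturbation so as to vanish on the stretched neck (this modification only changes the complex up to continuation). A Floer cylinder contributing a non-zero matrix element of $\partial_\infty$ from $x^+\in V_i$ to $y\in V_j$ with $i\neq j$ would then persist in the $T\to+\infty$ limit and produce a broken Floer--holomorphic building whose pieces belong to one of three types: completed cylindrical ends $\hat V_k$ (carrying the Floer equation), symplectisation levels $\R\times(\partial X)_k$, or the completed compact region $\hat X_{-\epsilon/2}$.

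Since $\partial X=\bigsqcup_k(\partial X)_k$ is a disjoint union, each connected piece of the first two types sits over a single boundary component. The piece containing $x^+$ must therefore lie in $\hat V_i$ with its remaining ends on $(\partial X)_i$, and the piece containing $y$ must lie in $\hat V_j$; the only way to match them through the building is via a piece in $\hat X_{-\epsilon/2}$ with ends on both $(\partial X)_i$ and $(\partial X)_j$. But such a bridging piece corresponds, when re-interpreted as an unbroken Floer trajectory in $\hat X$ prior to stretching, to a configuration that breaks through a generator of $SC^*_0$ (respectively $CW^*_0$): its contribution is absorbed into the connecting map $\delta$ of the cone structure $\OP{Cone}(\delta)$ and therefore vanishes in the quotient $\partial_\infty$. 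The wrapped case proceeds identically, using strips in place of cylinders and the Lagrangian-boundary analogues of the compactness statements.

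The hard part will be executing the neck-stretching compactness in the presence of the Hamiltonian term, specifically producing a priori $L^\infty$-bounds on $\tau\circ u$. These are supplied by the integrated maximum principle of Abouzaid--Seidel (\cite{Abouzaid:OpenString}; see also \cite{Ritter:TQFT}) applied independently in each $V_k$; combined with standard SFT compactness this yields the broken building described above, after which the decomposition reduces to the combinatorial observation that disjoint contact boundary components cannot be bridged through symplectisation levels.
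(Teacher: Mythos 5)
Your reduction of the lemma to showing that the induced differential cannot send a generator over $(\partial X)_i$ to one over $(\partial X)_j$ with $j\neq i$ is the right target, and stretching along $\{\tau=-\epsilon/2\}$ is a reasonable vehicle, but the step where you dispose of the ``bridging piece'' in $\hat X_{-\epsilon/2}$ is wrong. In an SFT limit the levels of the building are glued along closed \emph{Reeb orbits} (resp.\ Reeb chords) of the stretched hypersurface $\Sigma=\partial X_{-\epsilon/2}$; these are not generators of $SC^*_0(X)$ or $CW^*_0(L,L)$, so a building whose bottom level bridges $(\partial X)_i$ and $(\partial X)_j$ does \emph{not} correspond to a Floer trajectory broken at a generator of the low-energy subcomplex, and its contribution is not ``absorbed into $\delta$'': for every finite stretching parameter such a trajectory is a single unbroken solution whose output lies in $SC^*_\infty$, hence it contributes directly to the quotient differential $\partial_\infty$. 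Since $X$ is connected, the completed bottom level $\hat X_{-\epsilon/2}$ genuinely touches every boundary component, so the purely combinatorial observation you end with does not exclude the bridging configuration. What actually kills it is an action estimate you never invoke: all generators of $SC^*_\infty$ and $CW^*_\infty$ have action bounded below by a positive constant, and the part of the limit building lying over $(\partial X)_j$ and carrying the output $y$ has no positive ends --- only the Floer output $y$ (contributing $-\mathcal{A}(y)<0$) and negative SFT punctures at Reeb orbits (each contributing negatively) --- forcing its energy to be negative. This positivity of the output's action is exactly the hypothesis of the paper's Lemma \ref{lma:actionstretch}, which moreover sidesteps SFT compactness entirely: one fixes $J$ (cylindrical near $\Sigma_i$, the union of the components of $\Sigma$ other than the $i$-th) and the Hamiltonian vector field, deforms only the Liouville form to $\lambda_C$ so that actions of generators on the far side of $\Sigma_i$ are rescaled by $e^C$, and notes that for $C\gg 0$ any positive-action output would have action exceeding that of the input, contradicting that the differential decreases action.

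A secondary problem: modifying $H^a_t$ so that it vanishes on the neck takes you outside the class of Hamiltonians fixed in Subsection \ref{sec:hamiltonian} (which requires $\partial_\tau H^a_t>0$ there, and uses the specific form $a'e^\tau+b'$ near $\tau=-\epsilon/2$ in the action computations), and ``changes the complex up to continuation'' is not sufficient for a chain-level statement: continuation equivalences preserve homology, not individual matrix coefficients of the differential, and the cone structure used in the rest of the section is a chain-level structure. Either prove the statement for the unmodified Hamiltonian, or verify that your modified data still yields the complexes to which the subsequent propositions are applied.
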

\begin{proof}
Since the generators in $SC^*_\infty(X)$ and $CW^*_\infty(L,L)$ all correspond to Reeb orbit/chord generators, they can be assumed to all have action bounded from below by some fixed positive number. The statement is now a direct consequence of a standard neck-stretching argument which shows that the differential of $SC^*_\infty((\partial X)_i)$ and $CW^*_\infty((\partial L)_i,(\partial L)_i)$ cannot output a generator in some other component if the almost complex structure is taken to be cylindrical near the the hypersurface
$$\Sigma = \{\tau= -\epsilon/2\} = \partial X_{-\epsilon/2
} \subset \hat{X}.$$
 To that end, we apply Lemma \ref{lma:actionstretch} to the decomposition of $(\hat{X},\lambda)$ induced by $\Sigma_i \subset \Sigma$, where $\Sigma_i$ contains all connected components \emph{except} the one corresponding to $i\in \pi_0(\partial X)$. 
\end{proof}
 
For any $i \in \pi_0(\partial X)$ and $i \in I_L$, respectively, we have the sub-spaces
$$ SC^*_{0,i}(X) \subset SC^*_{0}(X)$$
of the symplectic cohomology complex and
$$CW^*_{0,i}(L,L) \subset CW^*_0(L,L)$$
of the wrapped Floer cohomology complex, each consisting of the generators that are contained inside
$$X_{-\epsilon-\epsilon/2} \cup U_i.$$
where
$$ U_i \subset X_{[-\epsilon-\epsilon/2,0]}=[-\epsilon-\epsilon/2,0] \times Y$$
is the connected component of the collar of $\partial X$ that corresponds to $i \in \pi_0(\partial X)$.
\begin{lma}
 The inclusions of vector subspaces
 $$ SC^*_{0,i}(X) \subset SC^*_0(X) \:\:\:\: \text{and} \:\:\:\:
 CW^*_{0,i}(L,L) \subset CW^*_0(L,L)$$ are sub-complexes that on homology give rise to the canonical maps
$$ H^*(X,\partial X \setminus (\partial X)_i) \to H^*(X) \:\:\:\: \text{and} \:\:\:\:
H^*(L,\partial L \setminus (\partial L)_i
) \to H^*(L)$$
in singular cohomology of these spaces.
\end{lma}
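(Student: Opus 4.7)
The strategy is to prove the two statements in parallel, since the arguments for symplectic cohomology and wrapped Floer cohomology are structurally identical. Denote by $C^*_i \subset C^*$ either of the inclusions $SC^*_{0,i}(X) \subset SC^*_0(X)$ or $CW^*_{0,i}(L,L) \subset CW^*_0(L,L)$. The plan has three steps: verifying the sub-complex property, computing the homology of the quotient, and extracting the claim from a long exact sequence comparison.

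\textbf{Sub-complex property.} By the action analysis in Subsection \ref{sec:hamiltonian}, the Type (C) generators carry strictly positive action, while the Type (A) and (B) generators have negative actions. Since the Floer differential decreases action, a differential from a Type (A) or (B) generator can only have Type (A) or (B) outputs, all of which lie in $C^*_i$. The only potentially problematic case is a differential from a Type (C) generator in $U_i$ to a Type (C) generator in $U_j$ with $j \neq i$, which is not ruled out by action alone. I would exclude this by a neck-stretching argument in the spirit of Lemma \ref{lma:complexdecomp}, applied along a hypersurface separating the collar components $U_j$ from one another (for instance, one enclosing the component $(\partial X)_i$ and disjoint from the others). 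Equivalently, since the Hamiltonian is $C^2$-small on the compact part, Floer trajectories between Type (C) generators reduce to Morse trajectories of $f$ on $Y$, and these trivially respect the disjoint decomposition $Y = \bigsqcup_i (\partial X)_i$.

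\textbf{Quotient complex and long exact sequence.} The quotient $C^*/C^*_i$ is freely generated by the Type (C) generators in $U_j$ for $j \neq i$. By the same Morse-Floer reduction applied componentwise, the induced differential on the quotient coincides with the Morse cohomology differential of $f$ restricted to $\partial X \setminus (\partial X)_i$ (respectively $\partial L \setminus (\partial L)_i$ in the open case), so $H^*(C^*/C^*_i) \cong H^*(\partial X \setminus (\partial X)_i)$ (resp.\ $H^*(\partial L \setminus (\partial L)_i)$). The short exact sequence $0 \to C^*_i \to C^* \to C^*/C^*_i \to 0$ induces a long exact sequence in cohomology which, combined with Lemma \ref{lma:zeropart}, takes the form
\begin{equation*}
\cdots \to H^*(C^*_i) \to H^*(X) \to H^*(\partial X \setminus (\partial X)_i) \to H^{*+1}(C^*_i) \to \cdots
\end{equation*}
(and analogously for the Lagrangian case).

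\textbf{Identification and main obstacle.} To conclude, I would identify this sequence with the topological long exact sequence of the pair $(X, \partial X \setminus (\partial X)_i)$ and invoke the five-lemma. The principal technical point, and the main obstacle, is verifying on the chain level that the Floer-theoretic projection $C^* \twoheadrightarrow C^*/C^*_i$ realizes the topological restriction-to-boundary map. This is accomplished by choosing the auxiliary data---in particular the Morse function $f$ on $Y$, and, in the open case, a primitive of $\lambda|_{TL}$ vanishing near $\partial L$---to be compatible with an explicit Morse model of the pair $(X, \partial X \setminus (\partial X)_i)$, under which the Type (C) generators at $\{-\epsilon\} \times Y$ represent the boundary cocycles. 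Standard Floer continuation arguments, of the type used in the proof of Lemma \ref{lma:zeropart}, then match the two complexes and the induced long exact sequences naturally, yielding $H^*(C^*_i) \cong H^*(X, \partial X \setminus (\partial X)_i)$ (respectively $H^*(L, \partial L \setminus (\partial L)_i)$) with the inclusion-induced map identified with the canonical one.
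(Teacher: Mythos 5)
Your argument is essentially the paper's: the sub-complex property follows from the action filtration (Types (A)/(B) negative, Type (C) positive) together with a neck-stretching via Lemma \ref{lma:actionstretch}, and the homology identification is Floer's Morse-theoretic computation (your quotient-complex/long-exact-sequence packaging is just a standard reorganisation of that same identification). One small correction to your parenthetical: because Lemma \ref{lma:actionstretch} only forbids positive-action \emph{outputs} lying in $\hat{X}_+$, the hypersurface must be taken to be the union of the collar components at $\{\tau=-2\epsilon-\epsilon/2\}$ over all $j\neq i$ (as in Lemma \ref{lma:complexdecomp}), so that the forbidden Type (C) generators in $U_j$, $j \neq i$, sit on the positive side --- not a hypersurface enclosing $(\partial X)_i$ itself.
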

\begin{proof}

\emph{The subcomplex property:} The generators contained inside $X_{-2\epsilon}$ form a subcomplex simply by the action computations from Subsection \ref{sec:hamiltonian}. When adjoining the generators in $U_i \subset X$ we again get a sub-complex, as follows by a neck-stretching argument using Lemma \ref{lma:actionstretch} similar to that in the proof of Lemma \ref{lma:complexdecomp}, but applied to the hypersurface $\Sigma_i$ consisting of all connected components of
$$\Sigma=\{\tau=-2\epsilon-\epsilon/2\}=\partial X_{-2\epsilon-\epsilon/2}$$
\emph{except} the component corresponding to $i \in \pi_0(\partial X).$ To that end, note that all generators in $\{\tau=-\epsilon\}=\partial X_{-\epsilon}$ are of positive action. 

\emph{The computations of the homology and inclusion maps:} This follows from Floer's classical computation \cite{Floer:Morse}, by which the Floer homology complex computes the Morse co homology for sufficiently $C^1$-small Hamiltonians and suitable almost complex structures. Here we again need the no-escape lemma for Floer strips as in the proof of Lemma \ref{lma:zeropart}
\end{proof}
Theorems \ref{thm:b} and \ref{thm:b2} are now direct consequences of the following result.
\begin{prp}
The restrictions
$$\delta_i \colon SC^*_\infty((\partial X)_i) \to SC^*_0(X) \:\:\:\: \text{and} \:\:\:\: \delta_i \colon CW^*_\infty((\partial L)_i,(\partial L)_i) \to CW^*_0(L,L)$$
of the map $\delta$ takes values in $SC^*_{0,i}(X)$ and $CW^*_{0,i}(L,L)$, respectively.
\end{prp}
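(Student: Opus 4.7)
My plan is to rerun, in spirit, the neck-stretching argument from the preceding lemma that was used to prove the subcomplex property $SC^*_{0,i}(X) \subset SC^*_0(X)$, applied to the very same separating hypersurface
$$\Sigma_i \;=\; \{\tau = -2\epsilon - \epsilon/2\} \;\cap\; \bigsqcup_{j \in \pi_0(\partial X),\, j \neq i} (\partial X)_j \;\subset\; \partial X_{-2\epsilon - \epsilon/2}.$$
The generators of $SC^*_0(X) \setminus SC^*_{0,i}(X)$ are precisely the type (B) and (C) generators lying in $U_j$ for $j \neq i$, hence on the outer side of $\Sigma_i$; the generators $x \in SC^*_\infty((\partial X)_i)$ lie in the cylindrical end over $(\partial X)_i$, hence on the inner side. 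Any Floer strip contributing to the coefficient of such a target $y$ in $\delta(x)$ therefore must cross $\Sigma_i$, and so is available for analysis by stretching the neck there.

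Stretching along $\Sigma_i$ and passing to the limit via Lemma \ref{lma:actionstretch}, any such strip breaks into finite-energy Floer pieces connected by Reeb orbit asymptotes along $\Sigma_i$. In particular there is an outer piece confined to the cylindrical region over $(\partial X)_j$, with $y$ at its negative end and an asymptotic Reeb orbit on $\Sigma_i \cap (\partial X)_j$ at its positive end. This is exactly the configuration whose nonexistence is invoked in the preceding lemma: the action accounting from Lemma \ref{lma:actionstretch}, together with the observation there that all generators in $\partial X_{-\epsilon}$ are of positive action, rules out such an outer piece. Crucially, that argument depends only on the existence of the outer piece and on the Hamiltonian profile in the outer region over $(\partial X)_j$, and makes no use of the nature of the source generator at the far end of the full broken configuration.

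The open-string statement for $\delta_i \colon CW^*_\infty((\partial L)_i, (\partial L)_i) \to CW^*_0(L,L)$ follows by the same argument, with Reeb orbits replaced by Reeb chords on $\Sigma_i \cap (\partial X)_j$ whose endpoints lie on the cylindrical extension $[-4\epsilon, 0] \times \Lambda$ of $L$, which is cylindrical in the collar by the setup in Subsection \ref{sec:hamiltonian}. The main point I expect to have to verify carefully is precisely that this transfer is legitimate: that the analysis of the outer piece, and in particular the action bound used to exclude it, is oblivious to whether the source of the original strip is a low-action generator in $U_i$ (as in the preceding lemma) or a potentially high-action Reeb-orbit generator in the cylindrical end over $(\partial X)_i$ (as here). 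Since the outer piece is confined to the outer region over $(\partial X)_j$ and only sees $y$ and the neck Reeb orbit/chord as asymptotes, the obstruction identified in the preceding lemma transfers verbatim, and the proposition follows.
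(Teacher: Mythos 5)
Your argument is correct and is essentially the paper's own proof: both apply the neck-stretching Lemma \ref{lma:actionstretch} along the union $\Sigma_i$ of the components of $\{\tau=-2\epsilon-\epsilon/2\}$ not corresponding to $i$, and conclude by noting that the generators to be excluded, those in $\partial X_{-\epsilon}$ over $(\partial X)_j$ with $j\neq i$, have positive action while the argument is insensitive to the (possibly large) action of the input Reeb-orbit/chord generator. One harmless slip: the complement $SC^*_0(X)\setminus SC^*_{0,i}(X)$ consists only of the type (C) generators in $U_j$, $j\neq i$ (the type (B) generators at $\tau=-2\epsilon$ already lie in $X_{-\epsilon-\epsilon/2}\subset SC^*_{0,i}(X)$, which is just as well since their action is negative and the positive-action criterion would not exclude them).
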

\begin{proof}
We need to show that the differential of a generator in $SC^*_\infty((\partial X)_i) \subset SC^*(X)$ and $CW^*_\infty((\partial L)_i,(\partial L)_i) \subset CW^*(L,L)$ cannot have a non-vanishing coefficient for a generator in a component of $\{\tau= -\epsilon\}= \partial X_{-\epsilon}$ that does not correspond to $i \in \pi_0(\partial X)$. 

To that end, we again do a neck-stretching argument. Namely, if the almost complex structure is chosen to be cylindrical near the contact-type hypersurface
$$\Sigma = \{\tau = -2\epsilon-\epsilon/2 \} = \partial X_{-2\epsilon-\epsilon/2 } \subset X,$$
then we can then invoke the neck-stretching Lemma \ref{lma:actionstretch} to the union $\Sigma_i \subset \Sigma$ all connected components \emph{except} the one corresponding to $i \in \pi_0(\partial X)$ to obtain the result. Here one should note that all generators contained in the hypersurface $\{\tau=-\epsilon\}=\partial X_{-\epsilon}$ are of positive action. 
\end{proof}

\section{Conditions for vanishing wrapped Floer cohomology (Theorem \ref{thm:c})}

The vanishing of the symplectic homology of a Weinstein manifold whose completion is a product $(\hat{W} \times \C,\lambda_W \oplus \lambda_0)$ with a trivial $(\C,\lambda_0)$-factor with $\lambda_0=\frac{1}{2}(x\,dy-y\,dx)$ was first shown by Cieliebak in \cite{Cieliebak:Handle}. These are the completions of the so-called \textbf{subcritical Weinstein domains}, which is a class that contains e.g.~the standard symplectic Darboux ball. It then follows by Ritter's result \cite{Ritter:TQFT} that the wrapped Floer complexes also are acyclic in these manifolds, since the wrapped Floer cohomology is a module over the unital symplectic cohomology ring.

We need the following generalisation of the above vanishing criterion, formulated in terms of the existence of a contractible positive loop of the Legendrian boundary. Recall that a smooth loop of embedded Legendrians is called a \textbf{positive loop} if the normal vector field is always positively transverse to the contact distribution; a loop of Legendrians is said to be \textbf{contractible} if there is a two-parameter family of loops of Legendrians that connects the positive loop with a constant loop. Note that we do not ask for a positive contractible loop to be contractible \emph{through positive loops} (in fact, that is never possible). 
\begin{thm}
 \label{thm:vanishing}
 Let $L \subset (X^{2n},\lambda)$ be a connected Lagrangian with cylindrical ends. Assume that a non-empty union of Legendrian boundary components $\Lambda \subset \partial L \subset Y=\partial X$ admits a positive contractible loop in the complement of the remaining components $\partial L \setminus \Lambda$. Then $HW^*(L,L)=0$.
\end{thm}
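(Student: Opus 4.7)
The proof will follow the vanishing-via-positive-loops framework developed in \cite{Chantraine:Positive} and \cite{HedickeCantKilgore}, adapted to the wrapped Floer setting with multiple Legendrian boundary components. The key idea is that a positive contractible loop furnishes an action-shifting endomorphism of $CW^*(L,L)$ that is homotopic to the identity, which after iteration forces the unit to vanish. First I would promote the positive contractible loop $\Lambda_t$, $t \in [0,1]$, to a compactly supported contact isotopy $\psi_t$ of $(Y,\alpha)$ generated by a contact Hamiltonian $h_t \ge 0$ which is strictly positive along $\Lambda_t$ and supported away from $\partial L \setminus \Lambda$ (the latter is possible since $\Lambda_t$ avoids those components). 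Lifting $\psi_t$ to the symplectization $[0,\infty)\times Y$ and extending by the identity yields a compactly supported Hamiltonian isotopy $\Phi_t$ of $\hat X$ whose time-$1$ map preserves $\Lambda$ setwise, shifts the cylindrical end $[0,\infty)\times\Lambda \subset \hat L$ along the Reeb direction by at least $\delta = \int_0^1 \min_{\Lambda_t} h_t\, dt > 0$, and is the identity on the other cylindrical ends of $\hat L$. The contractibility of the loop then gives a Lagrangian isotopy $\hat L_s$, $s \in [0,1]$, from $\hat L_0 = \Phi_1(\hat L)$ back to $\hat L_1 = \hat L$ through Lagrangians with cylindrical ends whose Legendrian boundaries realize the null-homotopy.

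The Hamiltonian isotopy $\Phi_t$ induces a continuation endomorphism $c$ of $CW^*(\hat L, \hat L)$, and the Lagrangian isotopy $\hat L_s$ induces a quasi-isomorphism $\iota$ on the same complex via the standard invariance of wrapped Floer cohomology under exact Lagrangian isotopies through cylindrical Lagrangians. A two-parameter continuation argument, interpolating between the concatenation of $\Phi_t$ with $\hat L_s$ and the trivial family, shows that $\iota \circ c$ is chain-homotopic to the identity. On the other hand, the positivity of $h_t$ combined with a standard maximum principle for the continuation PDE forces $c$ to shift action upward by at least $\delta$ on all generators. Applied iteratively to the unit $1 \in HW^0(L,L)$, which by Lemma \ref{lma:zeropart} is represented in the small-action subcomplex $CW^*_0(L,L)$ by a generator of bounded action, this produces representatives of $1$ with action at least $k\delta$ for every $k \ge 1$. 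Since the generators of $CW^*_0(L,L)$ have a uniform upper bound on action independent of the wrapping slope (by the explicit description in Subsection \ref{sec:hamiltonian}), for $k$ sufficiently large the iterated representative cannot lie in $CW^*_0(L,L)$. Via the cone decomposition $CW^*(L,L) = \OP{Cone}(\delta \colon CW^*_\infty(L,L) \to CW^*_0(L,L))$ this forces $1 = 0$ in $HW^0(L,L)$, and since $HW^*(L,L)$ is a unital algebra we conclude $HW^*(L,L) = 0$.

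The main obstacle is establishing that $\iota \circ c$ is chain-homotopic to the identity while simultaneously controlling the action shift: the Lagrangian isotopy $\hat L_s$ realizing the null-homotopy is not itself positive, and the continuation map $\iota$ need not shift action in a favourable direction. The standard remedy, used in \cite{Chantraine:Positive,HedickeCantKilgore}, is to arrange the action defect from the compactly-supported null-homotopy to be bounded in absolute value by a constant independent of $k$, so that the linear growth $k\delta$ produced by iterating $\Phi_1$ eventually dominates. A secondary technical point is the no-escape principle ensuring that continuation trajectories do not leak into the cylindrical ends corresponding to $\partial L \setminus \Lambda$; this is handled by a neck-stretching argument analogous to Lemma \ref{lma:actionstretch}, together with the fact that $\Phi_t$ is the identity on those ends.
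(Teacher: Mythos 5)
There is a genuine gap at the heart of your iteration argument: the claim that the continuation endomorphism $c$ induced by $\Phi_t$ ``shifts action upward by at least $\delta$ on all generators'' fails precisely because $\partial L\setminus\Lambda$ may be non-empty. Since $h_t$ is supported away from $\partial L\setminus\Lambda$ (and, being a cut-off lift of a contact isotopy, away from the interior of $X$ as well), the actions of the Reeb-chord generators on the fixed ends and of the interior Morse-type generators are not shifted at all. In particular the unit, which by Lemma \ref{lma:zeropart} is represented by interior generators in $CW^*_0(L,L)$, is simply not moved by $c$, so iterating $c\simeq\mathrm{id}$ yields no contradiction: $c$ is (up to homotopy) the identity on the part of the complex where the unit lives and only wraps the $\Lambda$-ends. (There is also an internal tension in your setup: a compactly supported Hamiltonian isotopy of $\hat X$ cannot shift the cylindrical end $[0,\infty)\times\Lambda$ by the Reeb flow near infinity; the standard $\R$-invariant lift of the contact isotopy is not compactly supported.) Even in the case $\Lambda=\partial L$, the mechanism is more delicate than ``some representative of $1$ has action $\ge k\delta$'' --- that alone is not a contradiction in a direct limit; what one actually shows is that the continuation maps of the directed system become null-homotopic, or that an action-increasing automorphism homotopic to the identity is nilpotent on a complex of bounded action.

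The paper takes a different route: it treats the case $\Lambda=\partial L$ as a black box, quoting \cite[Theorem 1.15]{Chantraine:Positive} or \cite[Theorem 1.2]{HedickeCantKilgore}, and reduces the general case to it by a doubling-and-capping construction. One takes two disjoint copies of $(X,L)$, attaches a generalised Weinstein handle $T^*((\partial L\setminus\Lambda)\times[-1,1])$ along the fixed boundary components, and caps $L\sqcup L$ with the zero-section of the handle to obtain a Lagrangian $L''$ all of whose Legendrian ends lie in a positive contractible loop; hence $HW^*(L'',L'')=0$. Viterbo functoriality \cite{Abouzaid:OpenString} then gives a unital ring morphism $HW^*(L'',L'')\to HW^*(L,L)\oplus HW^*(L,L)$, which forces $HW^*(L,L)=0$. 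To rescue your approach you would need an analogous device that removes the fixed ends from the picture before running the action-shift argument; as written, the argument proves nothing when $\partial L\setminus\Lambda\neq\emptyset$.
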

\begin{proof}
If the entire Legendrian boundary of a Lagrangian $L \subset (X,\lambda)$ admits a positive contractible contact loop, then $HW^*(L,L)$ vanishes by \cite[Theorem 1.15]{Chantraine:Positive} or, alternatively, the stronger result \cite[Theorem 1.2]{HedickeCantKilgore} by Cant--Hedicke--Kilgore.

In the case when merely some of the components live in a positive loop, while others are fixed, we perform the following construction to the reduce the situation to the previous case. Consider the disjoint union $(X', \lambda')=(X,\lambda) \sqcup (X,\lambda)$ consisting of two disjoint copies of $(X,\lambda)$ together with the induced disjoint union $L'=L \sqcup L \subset (X', \lambda')$ of two copies of $L$. We can attach a generalised Weinstein handle $T^*((\partial L \setminus \Lambda) \times [-1,1])$ to $(X',\lambda)$ along the components that are fixed by the positive loop to create a Liouville manifold $(X'', \lambda'')$. We refer to e.g.~work by Husin \cite{Husin}, as well as ~\cite{DimitroglouRizell2022}, for further description of the construction of this generalised Weinstein handle attachment. 

The new Liouville manifold $(X'', \lambda'')$ admits an exact Lagrangian
$$L'' = L' \cup 0_{(\partial L \setminus \Lambda) \times [-1,1]}$$
with cylindrical ends, obtained by ``capping off'' $L'$ by adding the skeleton (i.e.~zero-section) of the generalised handle. Note that $L''$ now satisfies the property that all its entire Legendrian boundary $\Lambda \sqcup \Lambda$ sits inside a positive contractible loop. Hence, $HW^*(L'',L'')=0$, as we proved in the first case above. 

It now follows from an application of Viterbo functoriality that $HW^*(L',L')=HW^*(L,L) \oplus HW^*(L,L)$ vanishes as well; indeed, $L'' \setminus L'$ is an exact Lagrangian cobordism in the Liouville cobordism $X'' \setminus X'$, which by Viterbo functoriality induces a unital ring morphism $HW^*(L'',L'') \to HW^*(L',L')$. We refer to work by Abouzaid--Seidel \cite{Abouzaid:OpenString} for Viterbo functoriality in the case of wrapped Floer homology.
\end{proof}

By a \textbf{ closed contact Darboux ball} we mean a closed ball with smooth boundary contained inside the standard contact vector space
$$\left(\R^n_{\mathbf{x}} \times \R^n_{\mathbf{y}} \times \R_z,dz-\sum_i y_i\,dx_i\right),$$
whose boundary is transverse to the contact vector field
$$ V_{rad} \coloneqq z\partial_z+\frac{1}{2}\sum_i(x_i\partial_{x_i}+y_i\partial_{y_i})$$
 that corresponds to the rescaling $$(\mathbf{x},\mathbf{y},z) \mapsto \rho_t(\mathbf{x},\mathbf{y},z)=(e^{t/2}\mathbf{x},e^{t/2}\mathbf{y},e^tz).$$
In particular, the boundary of such a ball is \textbf{convex} in the sense of Giroux \cite{Giroux}; it can be seen that the interiors of these Darboux balls all are contactomorphic to entire ambient standard contact vector space. Combining the above with a recent results by Hedicke--Shelukhin \cite{HedickeShelukhin} we obtain:
\begin{cor}
\label{cor:vanishing}
Let $L \subset (X^{2n},\lambda)$ be a connected Lagrangian with cylindrical ends. Assume that a non-empty union of Legendrian boundary components $\Lambda \subset \partial L \subset Y=\partial X$ is contained inside a contact sub-domain $(Y_{sc} \setminus D ) \subset Y$ where $Y_{sc} \setminus D$ is contactomorphic to the ideal contact boundary $Y_{sc}=\partial_\infty(\hat{W}^{2(n-1)} \times \C)$ of a subcritical Weinstein domain with a finite number of disjoint closed contact Darboux balls $ D \subset Y_{sc}$ removed. If either $n \ge 3$, or $n=2$ and $Y_{sc}=S^3$, then $HW^*(L,L)=0$.
\end{cor}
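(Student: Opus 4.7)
The strategy is to reduce to Theorem \ref{thm:vanishing}: it suffices to produce a positive contractible loop of $\Lambda$ inside $Y \setminus (\partial L \setminus \Lambda)$. Such a loop will be manufactured as the restriction to $\Lambda$ of a compactly supported loop of contactomorphisms of $U \coloneqq Y_{sc} \setminus D$, extended by the identity on $Y\setminus U$. In particular, any such loop is automatically supported away from the components of $\partial L$ lying outside $U$, and by shrinking the support of the generating contact Hamiltonian to avoid tubular neighbourhoods of the (finitely many) components of $\partial L \setminus \Lambda$ that happen to lie inside $U$, the resulting loop of Legendrians remains disjoint from $\partial L \setminus \Lambda$ throughout the deformation.

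Constructing a positive contractible loop of compactly supported contactomorphisms of $U$ is the main task. When $n\ge 3$, the identity of the contact boundary $Y_{sc}=\partial_\infty(\hat{W}^{2(n-1)}\times\C)$ admits such a loop by the standard argument for subcritical Weinstein boundaries: rotating the $\C$-factor at infinity gives a positive loop of contactomorphisms whose contact Hamiltonian can be cut off to have compact support disjoint from $D$ and from the other prescribed avoidance set. The contractibility of the resulting loop follows by shrinking the rotational contact Hamiltonian continuously to zero; this is precisely the mechanism exploited in \cite{Chantraine:Positive}, where the vanishing of wrapped Floer cohomology of Lagrangian fillings of Legendrians in subcritical boundaries was established.

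For $n=2$, a subcritical Weinstein 2-manifold $\hat{W}$ is forced to be a disjoint union of copies of $\C$, so indeed $Y_{sc}=S^3$. The rotational construction above fails because $\hat{W}$ has no genuine subcritical direction into which the support of the loop can be dissolved. Instead, one invokes the recent theorem of Hedicke--Shelukhin \cite{HedickeShelukhin}, which supplies a positive contractible loop of contactomorphisms of $S^3$ supported away from any prescribed finite collection of closed Darboux balls. This is the main technical obstacle of the argument: in every higher dimension elementary subcritical geometry suffices, but on $S^3$ one genuinely needs the delicate construction of \cite{HedickeShelukhin}. Once the positive contractible loop of contactomorphisms of $U$ is in hand, its restriction to $\Lambda$ provides the required loop of Legendrians, and Theorem \ref{thm:vanishing} yields $HW^*(L,L)=0$.
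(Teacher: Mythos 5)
Your reduction to Theorem \ref{thm:vanishing} is the right first move, but the way you manufacture the positive contractible loop has a genuine gap, and your division of labour between the two cases is essentially the reverse of what works. For $n\ge 3$ you claim that rotating the $\C$-factor of $\hat{W}^{2(n-1)}\times\C$, after cutting off the contact Hamiltonian, gives a positive contractible loop by ``elementary subcritical geometry''. This fails: the contact Hamiltonian of that rotation is only non-negative --- it vanishes along the binding $\partial_\infty(\hat{W}\times\{0\})\subset Y_{sc}$, which has codimension $2$, so a Legendrian $\Lambda^{n-1}$ cannot be pushed off it by general position precisely when $n\ge 3$; the loop is therefore not positive along $\Lambda$. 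Cutting the Hamiltonian off to be supported away from $D$ makes this worse, since the rotation orbit of $\Lambda$ is not confined away from $D$ and positivity dies wherever the orbit meets the cut-off region (note also that a \emph{compactly supported} loop of contactomorphisms can never itself be a positive loop of contactomorphisms; only its restriction to $\Lambda$ can be a positive loop of Legendrians). Finally, ``shrinking the contact Hamiltonian to zero'' does not contract through \emph{loops}: $\phi^t_{sH}$ is not a loop for $0<s<1$. Non-orderability of general subcritical ideal boundaries in dimension $2n-1\ge 5$ is exactly the recent theorem of Hedicke--Shelukhin \cite[Theorem 1.5]{HedickeShelukhin}, and it is this result, not an elementary rotation, that must be invoked for $n\ge 3$. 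The confinement to $Y_{sc}\setminus D$ is then a separate step: by convexity of $\partial D$ one may replace $D$ by a finite set of points $P$, and the loop of $\Lambda$ together with its contraction sweeps out a set of dimension $n+1<2n-1$, so it avoids $P$ after a generic perturbation. This dimension count is the actual reason the hypothesis $n\ge 3$ appears.

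For $n=2$ you attribute to \cite{HedickeShelukhin} a positive contractible loop of contactomorphisms of $S^3$ supported away from prescribed Darboux balls; that is not what their theorem provides (their result concerns the subcritical case in higher dimensions, and non-orderability of $S^3$ itself is due to Eliashberg--Kim--Polterovich \cite{GeometryOrderability}). The correct and more elementary route is that $S^3$ minus a closed Darboux ball is an open Darboux ball, contactomorphic to the standard contact $\R^3$, so $\Lambda\subset S^3\setminus D$ sits inside a contactisation, where an explicit positive contractible loop of any Legendrian exists by \cite{Chantraine:Positive}. Your attention to keeping the loop disjoint from $\partial L\setminus\Lambda$ is a reasonable instinct, but as implemented (shrinking the support of the generating Hamiltonian near those components) it again conflicts with positivity along the trace of $\Lambda$; the intended reading of the hypothesis is that those components lie outside the sub-domain $Y_{sc}\setminus D$.
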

\begin{rmk}
It is expected that the additional hypothesis in the case $n=2$ is not needed, i.e.~ that it suffices that $Y_{sc}$ is subcritical in all dimensions $n \ge 2$; see \cite[Remark 1.6(i)]{HedickeShelukhin}.
\end{rmk}
\begin{proof}
By assumption $\Lambda^{n-1} \subset Y_{sc}^{2n-1} \setminus D$ where $Y_{sc}^{2n-1}=\partial_\infty(\hat{W}^{2(n-1)} \times \C)$ is the ideal contact boundary of a subcritical Weinstein domain, and $D$ is a finite disjoint union of finite Darboux balls.

 First we show that $\Lambda$ admits a positive contractible loop confined to $Y_{sc}$. Then we show that the same is true also in $Y_{sc} \setminus D$. The result then follows from Theorem \ref{thm:vanishing}. 

It was shown in \cite[Theorem 1.5]{HedickeShelukhin} that the ideal contact boundary $Y_{sc}^{2n-1}$ of a subcritical Weinstein manifold is non-orderable whenever $n\ge 3$, i.e.~such a contact manifold admit a positive loop of global contactomorphisms that can be contracted through contactomorphisms. It automatically follows that any Legendrian inside $Y_{sc}$ also admits a positive contractible loop.

When $n=2$ the same is known to be true when $Y^{3}=S^3$ is the standard contact 3-sphere; see work by Eliashberg--Kim--Polterovich \cite{GeometryOrderability}. 

 We now argue that the positive loop and its contraction both can be assumed to live inside the complement $Y_{sc}\setminus D$ of the Darboux balls. When $n \ge 3$, this follows from the well-known fact that $Y_{sc}\setminus D$ is contactomorphic to $Y_{sc} \setminus P$ where $P$ is a finite set consisting of $|P|=\dim H_0(D)$ number of points. Indeed, since $n \ge 3$, a generic position argument implies that both the positive loop of $\Lambda$ and its contraction can be assumed to miss the finite set of points $P$. To see that removing a closed ball is the same as removing a point, one can use a cut-off of the vector field $V_{rad}$ described above, i.e.~the convexity of the boundary $\partial D$, to produce the sought contactomorphism.

 The aforementioned general position argument fails when $n=2$. However, since we have made the additional assumption that $Y_{sc}=S^3$, we can instead argue as follows. Note that, $\Lambda \subset S^3\setminus D$ itself lives inside an open contact Darboux ball, which is contactomorphic to the standard contact vector space $\R^{ 3}$. Finally, we can use the fact that any Legendrian inside the standard contact vector space sits in a positive contractible loop by an explicit construction; see e.g.~\cite{Chantraine:Positive}.
\end{proof}

\begin{proof}[Proof of Theorem \ref{thm:c}]
Corollary \ref{cor:Lagcyl} takes a periodic orbit of the Liouville flow and produces an exact Lagrangian cylinder $L \subset (X,\lambda)$ with cylindrical ends in two different components of the contact boundary. Theorem \ref{thm:b} implies that $HW^*(L,L)\neq 0$. We argue by contradiction, and assume that the periodic orbit of the Liouville flow is contained inside a smooth ball. It follows that each Legendrian boundary component of $L$ also is contained inside a smooth ball inside the respective component of $\partial X=Y$.

Finally, since $Y=\partial X$ is a three-dimensional contact manifold that is tight (since it is fillable), it follows from Eliashberg's uniqueness result for tight 3-dimensional contact balls \cite{Contact3Man} that each of the two components of $\partial L$ is contained inside an open standard Darboux ball in $\partial Y$. Since the interior of any open Darboux ball is contactomorphic to $S^3 \setminus D$ where $D$ is a closed Darboux ball, we can invoke Corollary \ref{cor:vanishing} to conclude the vanishing of the wrapped Floer cohomology group $HW^*(L,L)=0$. This leads to the sought contradiction.
\end{proof}

\appendix
\section{ Action conventions, Floer strips, and neck-stretching}
\label{sec:appendix}

The goal here is to establish the ``neck stretching'' result Lemma \ref{lma:actionstretch}, which is similar to \cite[Lemma 6.2]{Dimitroglou:Energy}, and which can be used for the same purposes as \cite[Lemma 2.4]{CieliebakOancea}. The purpose of this technique is to exclude the existence of Floer-strips with certain asymptotics that cross a barrier in the form of a hypersurface of contact type. First, we need to introduce the conventions used in the setup of the Floer theories used here.

\subsection{Cylindrical almost complex structures}

Recall that a compact hypersurface $\Sigma^{2n-1} \subset (\hat{X}^{2n},\lambda)$ is of \textbf{contact type} if the Liouville vector field $\zeta$ is everywhere transverse to $\Sigma$. Near $\Sigma$ there are induced coordinates that identifies a subset of $(\hat{X}^{2n},\lambda)$ with
$$ \iota \colon ([-\epsilon,\epsilon]_\tau \times \Sigma^{2n-1},e^\tau\alpha_\Sigma) \hookrightarrow (\hat{X}^{2n},\lambda),\:\: \alpha_\Sigma \coloneqq \lambda|_{T\Sigma},$$
while preserving the primitives of the symplectic form, and where $\iota|_{\{0\} \times \Sigma}$ is the original embedding of $\Sigma$.

We say that a compatible almost complex structure is \textbf{cylindrical} near a hypersurface of contact type if is invariant under the Liouville flow (i.e.~invariant under translation of the $\tau$--coordinate), satisfies $J(\ker \alpha_\Sigma \cap T\Sigma)=\ker\alpha_\Sigma\cap T\Sigma$, and $J\partial_\tau=R$ where $R$ is the Reeb vector-field in $\Gamma(T\Sigma)$ induced by the contact-form $\alpha_\Sigma$. The most important feature of a cylindrical almost complex structure that we will use, is that it simultaneously is compatible also with any symplectic form $d(e^{\psi(\tau)} \alpha_\Sigma)$ where $\psi(\tau)$ is smooth and satisfies $\psi'(\tau) >0$.

We say that a Lagrangian $L$ is cylindrical near $\Sigma$ if it is of the form
$$[-\epsilon,\epsilon] \times \Lambda \subset [-\epsilon,\epsilon] \times \Sigma$$
in the coordinates above, which means that $\Lambda \subset (\Sigma,\alpha_\Sigma)$ is a Legendrian submanifold.

\subsection{Floer strips and their action}
\label{sec:floerstrips}
Our Floer complex are defined with differentials and continuation maps that count 
\textbf{finite energy rigid Floer strips}, i.e.~transversely cut-out solutions
\begin{equation}
 \label{eq:floerstrip}
\begin{cases}
u \colon (\R_s \times [0,1]_t,\R \times\{0\},\R \times\{1\}) \to (\hat{X},\hat{L}_0,\hat{L}_1),\\
\partial_s u+J_t(u)(\partial_t u-X_{ t}\circ u)=0.
\end{cases}
\end{equation}
Here $J_t$ is an auxiliary choice of time-dependent family of compatible almost complex structures that are fixed and cylindrical outside of a compact subset, and $X_{ t}$ is a $t$-dependent Hamiltonian vector field. The asymptotics $x_\pm(t)=\lim_{s \to \pm\infty} u(s,t)$ are Hamiltonian chords $x_\pm(t)=\phi^t_{H}(x_\pm(0))$, $t \in [0,1]$.

There is a similar equation in the case of periodic Hamiltonian orbits $\phi^t_H(x(0))$ where $t \in S^1$ thus is a coordinate on the circle, and the domain is an infinite cylinder instead of a strip. These so-called Floer cylinders are used when defining the symplectic cohomology complex.

The Floer differentials count solutions
$$ \langle d(x_+),x_-\rangle =\# \mathcal{M}(x_+,x_-)$$
in the moduli space $\mathcal{M}(x_+,x_-)$ of strips or cylinders of the above type for generic data of expected dimension one, with input given by the asymptotic $x_+$, while the output is the asytmptotic $x_-$.

There are also other operations that are necessary for defining Floer theories, such as continuation maps, and chain homotopy operators between continuation maps. They are defined similarly, but where the Hamiltonian vector-field $X_{s,t}$ in the Floer equation also is allowed to satisfy $\partial_s X_{s,t} \neq 0$ for $s \in [-A,A]$ in some bounded subset, i.e.~it is allowed to have a non-trivial dependence on $s$ there.

For the action of generators, we employ the convention from \cite{CieliebakOancea}. The action of an 1-periodic Hamiltonian orbit $x(t)=\phi^t_{H}(x(0)) \in SC^*(X,\lambda)$ in the Floer defined for the Hamiltonian $H$ is equal to
\begin{equation}
 \label{eq:actionclosed}
\mathcal{A}_{\lambda,H}(x) =\int_{x}\lambda-\int_0^1 H_t(x(t))dt.
\end{equation}
Similarly, the action of a time-1 Hamiltonian chord $x(t)=\phi^t_{H}(x(0)$ from $x(0) \in \hat{L}_0$ to $x(1) \in \hat{L}_1$ is equal to
\begin{equation}
 \label{eq:actionopen}
\mathcal{A}_{\lambda,H,\{f_i\}}(x) =\int_{x}\lambda-\int_0^1 H_t(x(t))dt+f_0(x(0))-f_1(x(1)),
\end{equation}
where $f_i \colon \hat{L}_i \to \R$ are auxiliary choices of \textbf{potentials}, i.e.~primitives of $\lambda|_{T\hat{L}_i}$. The potentials exist by the assumption that $\hat{L}_i$ are exact Lagrangian submanifolds. When the Floer energy
$$ E(u) \coloneqq \int_u d\lambda(\partial_s u, J\partial_s u) dsdt \ge 0 $$
is finite for a Floer strip (resp. cylinder) $u$, it follows that $u$ is asymptotic to a chord (resp. orbit) $x_+$ and $x_-$ at $s=+\infty$ and $s=-\infty$, respectively. Further, we have 
\begin{equation}\mathcal{A}_{\lambda,H}(x_+)-\mathcal{A}_{\lambda,H}(x_-)=E(u)\end{equation}
whenever $X_{s,t}=X_t$ is the $s$-independent Hamiltonian vector-field generated by $H_t$, e.g.~when $u$ is a Floer strip counted by the differential. In other words, the Floer differential \emph{decreases} the filtration induced by the action defined above.

The other important case is when $u$ is a Floer strip counted by the continuation map that increases the slope of the Hamiltonian $H_t$ with Hamiltonian vector field $X_t$, i.e.~$X_{s,t}=\beta(s)X_t$ for a bump-function $\beta(s) \ge 1$ with $\beta'\le 0$ of compact support, and $\beta(s)=1$ for $s \gg 0$. Here we let the Hamiltonian vector fields $X^\pm_t=\beta(\pm\infty)X_t$ be induced by Hamiltonians $H^\pm_t $, and thus we get
\begin{equation}
\mathcal{A}_{\lambda,H^+}(x_+)-\mathcal{A}_{\lambda,H^-}(x_-)\ge E(u)
\end{equation}

\subsection{Neck-stretching lemma based upon action}

Now assume that $\Sigma$ is a contact-type hypersurface in $(\hat{X},\lambda)$ that divides the latter Liouville domain into two components $\hat{X} \setminus \Sigma = \hat{X}_+ \sqcup \hat{X}_-$. Here we denote by $\hat{X}_\pm$ the component that contains the time-$\pm\epsilon$ Liouville flow of $\Sigma$.

The main result here is the following condition that prevents Floer-strips and Floer cylinders used in the definition of the differential, as well as certain continuation maps, from crossing the contact-type hypersurface $\Sigma$. 

\begin{lma}
\label{lma:actionstretch}
Assume that the Hamiltonian takes the form $H_t(\tau)=ae^{\tau}+b$ near $\Sigma$ for $a >0, b \ge 0$, and that $J$ is a compatible almost complex structure on $(\hat{X},\lambda)$ which is cylindrical near $\Sigma$ as well as outside of a compact subset.
\begin{itemize}
\item A Floer cylinder which is either a Floer differential or a continuation cylinder as above, whose input asymptotic is a periodic Reeb orbit $x_+$ contained in $\hat{X}_- \setminus \{\tau \ge -\epsilon\}$, cannot have an output asymptotic $x_-$ contained in $\hat{X}_+$ that is of \emph{positive} action;
\item The analogous statement also holds for a Floer strip asymptotic to Reeb chords from $\hat{L}_0$ to $\hat{L}_1$, under the additional assumption that $\hat{L}_i \subset \hat{X}$, $i=0,1$, are Lagrangian submanifolds that are cylindrical near $\Sigma$, where $\hat{L}_i$ admits a global primitive of $\lambda|_{\hat{T}L_i}$ that vanishes on $\Lambda_i = \hat{L}_i \cap \Sigma$, and where the action of the generators has been computed for these choices of potentials.
\end{itemize}
\end{lma}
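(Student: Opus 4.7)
My plan is to argue by contradiction via Stokes' theorem on a sub-region of the domain, combined with the integrated maximum principle in the cylindrical collar of $\Sigma$ in the style of Abouzaid--Seidel and Cieliebak--Oancea. The cylinder and strip cases are structurally identical, so I will sketch the cylinder case; the strip case differs only in that the Lagrangian potentials contribute additional boundary terms on $u^{-1}(\hat{L}_i)$, which vanish in the cylindrical region precisely by the hypothesis $f_i|_{\Lambda_i}=0$.

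Suppose for contradiction such a Floer cylinder $u$ exists with $\mathcal{A}(x_-) > 0$. Since $\tau(x_+) < -\epsilon$ and $\tau(x_-) > 0$, the function $\tau \circ u$ attains every intermediate value, so one may pick a regular value $c \in (-\epsilon, 0)$; then $u^{-1}(\{\tau = c\})$ is a finite non-empty union of smoothly embedded circles. Let $D^+$ denote the union of components of $u^{-1}(\{\tau > c\})$ containing the puncture at $s = -\infty$. Applying Stokes' theorem to $\lambda$ on $D^+$ and substituting $u^*d\lambda$ from the Floer equation yields, after sign bookkeeping with the paper's action conventions, the identity
\[
\mathcal{A}(x_-) \;=\; -E(u|_{D^+}) \;+\; \int_{D^+}(\partial_s H_{s,t})\circ u \, ds\,dt \;+\; \int_{\partial D^+ \cap u^{-1}(\{\tau=c\})} (u^*\lambda - H_t\,dt),
\]
where $E(u|_{D^+})=\int_{D^+}|\partial_s u|_J^2 \geq 0$, the middle term is $\leq 0$ for continuation cylinders (and vanishes for differentials), and the level-set boundary is oriented as $\partial D^+$.

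The crucial input is that in the cylindrical region (where $J$ is cylindrical and $H_t = ae^\tau + b$) the boundary integrand is pointwise non-positive. Decomposing $du$ in the frame $(\partial_\tau, R_{\alpha_\Sigma}, \xi)$, $\xi = \ker\alpha_\Sigma$, and using the cylindrical Floer identities of the form $\partial_s\tau = a - \alpha_\Sigma(\partial_t u)$ and $\partial_t\tau = \alpha_\Sigma(\partial_s u)$, a direct linear-algebra computation in the tangent to the level set establishes the inequality; this is the same mechanism as the subharmonicity of $e^{\tau \circ u}$ under the convexity of $H_t$ in $\tau$. Combining, $\mathcal{A}(x_-) \leq -E(u|_{D^+}) \leq 0$, contradicting the assumption. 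The main technical obstacle is the careful orientation/sign verification of the pointwise boundary inequality, which mirrors \cite[Lemma 6.2]{Dimitroglou:Energy} and the classical integrated maximum principle; in the Lagrangian case one additionally checks that the potential-induced boundary terms along $u^{-1}(\hat{L}_i)$ inside the cylindrical region vanish, which is exactly where the hypothesis $f_i|_{\Lambda_i}=0$ enters.
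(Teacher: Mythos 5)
Your argument is correct, but it is not the route the paper takes: you run the classical integrated-maximum-principle / no-escape argument (Stokes' theorem on the subdomain $D^+$ of the level set complement containing the output puncture, plus pointwise non-positivity of $u^*\lambda - H_t\,dt$ along $u^{-1}(\{\tau=c\})$, where both $b\ge 0$ and the boundary orientation of the level set enter), whereas the paper proves the lemma by deforming the Liouville form itself. Concretely, the paper replaces $\lambda$ by $\lambda_C$ with $\iota^*\lambda_C=e^{\psi_C(\tau)}\alpha$ in the collar, $\lambda_C=\lambda$ on $\hat{X}_-\setminus\{\tau\ge-\epsilon\}$ and $\lambda_C=e^C\lambda$ on $\hat{X}_+$; since the cylindrical $J$ is compatible with every $d\lambda_C$ and $X_t$ remains Hamiltonian for $d\lambda_C$, the \emph{same} Floer solution solves the Floer equation for the deformed data, and the global action--energy inequality for $(\lambda_C,H^C_t)$ gives $\mathcal{A}_{\lambda_C,H^C}(x_+)\ge\mathcal{A}_{\lambda_C,H^C}(x_-)\ge e^C\mathcal{A}_{\lambda,H}(x_-)$ with the left-hand side independent of $C$ --- impossible for $C\gg 0$ if $\mathcal{A}_{\lambda,H}(x_-)>0$. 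The paper's route absorbs the choice of regular level, the orientation bookkeeping, and the Lagrangian boundary arcs into one global action identity, and makes transparent why only the choice of $J$ near $\Sigma$ matters and why no limit is taken; your route is more local and more standard (Abouzaid--Seidel, Cieliebak--Oancea) and yields the slightly stronger pointwise statement that the flux of $u^*\lambda-H_t\,dt$ through each level set is non-positive. Two points you should make explicit: (i) the hypothesis $x_+\subset\hat{X}_-\setminus\{\tau\ge-\epsilon\}$ is precisely what guarantees that the $s=+\infty$ puncture does not lie in $D^+$, so the only asymptotic contribution to your Stokes identity is $\mathcal{A}(x_-)$; and (ii) the sign of $\int_{D^+}\partial_sH_{s,t}$ for continuation cylinders requires $\beta'\le 0$ together with $H_t\ge 0$ on the region where the homotopy is non-constant, which holds for the monotone homotopies used in the paper but is not automatic.
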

\begin{rmk}
The remarkable feature of this result is that it does not require to pass to any limit, it just requires a particular choice of almost complex structure near a contact-type hypersurface $\Sigma \subset (\hat{X},\lambda)$. See \cite[Lemma 6.2]{Dimitroglou:Energy} for a similar result.
\end{rmk}
\begin{proof}
We deform the symplectic structure on $(\hat{X},\lambda)$ by stretching the neck along $\Sigma \subset \hat{X}$. This amounts to the following construction. Consider a smooth function $\psi_C \colon [-\epsilon,\epsilon] \to \R$ that satisfies
\begin{itemize}
\item $\psi_C'(\tau) > 0$ for all $\tau \in [-\epsilon,\epsilon]$;
\item $\psi_C(\tau)=\tau$ near $\tau=-\epsilon$; and
\item $\psi_C(\tau)=C+\tau$ on $\tau \in [0,\epsilon]$ for some $C \ge 1$.
\end{itemize}
We obtain a family $(\hat{X},\lambda_C)$ of Liouville manifolds where $\lambda_C$ is determined in the following manner
\begin{itemize}
\item $\lambda_C=\lambda$ in $\hat{X}_- \setminus \{\tau \ge -\epsilon\}$;
\item $\lambda_C=e^C\lambda$ in $\hat{X}_+ \setminus \{\tau \le \epsilon\}$;
\item $\iota^*\lambda_C=e^{\psi_C(\tau)}\alpha$ in the tubular neighbourhood $[-\epsilon,\epsilon]_\tau \times \Sigma$ of $\Sigma$.
\end{itemize}

The Hamiltonian vector-field $X_t$ on $(\hat{X},d\lambda)$ that is generated by $H_t$ is still a Hamiltonian vector field when considered on $(\hat{X},d\lambda_C)$. The corresponding Hamiltonian coincides with $H_t$ on $\hat{X}_- \setminus \{\tau \ge -\epsilon\}$. Further, since the Hamiltonian vector field is given by $aR$ on $[-\epsilon,\epsilon] \times \Sigma$, it is generated by the Hamiltonian $ ae^{-\epsilon}+b+ \int_{-\epsilon}^\tau ae^{\psi_C(t)}dt$ there.

We conclude that the Hamiltonian $H^C_t$ on $(\hat{X},d\lambda_C)$ that generates $X_t$ satisfies
$$H^C_t=e^C H_t- e^{C}(ae^\epsilon+b)+(ae^{-\epsilon}+b)+\int_{-\epsilon}^\epsilon ae^{\psi_C(t)}dt$$
on $\hat{X}_+$. If we choose $\epsilon>0$ sufficiently small and $C \gg 0$ sufficiently large, the action of a generator $x_-$ in $\hat{X}_+$ thus satisfies
$$\mathcal{A}_{\lambda_C,H^C_t}(x_-) \ge e^C \mathcal{A}_{\lambda,H_t}(x_-)$$
for $C \gg 0$, while the action of a generator $x_+$ in $\hat{X}_- \setminus \{\tau \ge -\epsilon\}$ satisfies $\mathcal{A}_{\lambda_C,H^C_t}(x_+)=\mathcal{A}_{\lambda,H_t}(x_+)$.

Choosing $C \gg 0$ sufficiently large implies that there can be no Floer strip with input $x_+$ and output $x_-$ for either complexes $SC^*(X;H_t)$ or $CW^*(L_0,L_1;H_t)$; in the latter case of the wrapped Floer homology complex, it is necessary to use the choices of potentials of $\lambda|_{T\hat{L}_i}$ and $\lambda_C|_{T\hat{L}_i}$ that vanish on $\Lambda_i=\hat{L}_i \cap \Sigma$ and which hence also vanish in the cylindrical neighbourhood.
\end{proof}

\bibliographystyle{alphanum}
\bibliography{references}

\end{document}